\title{Denjoy-Carleman Microlocal Regularity on Smooth Real Submanifolds of Complex Space}
\author{Antonio Victor da Silva Jr. and Nicholas Braun Rodrigues}
\newcommand{\comment}[1]{}
\newcommand{\Cl}{\mathcal{C}}
\newcommand{\V}{\mathcal{V}}
\newcommand{\M}{\mathcal{M}}
\newcommand{\Cm}{\Cl^\M}
\newcommand{\Ci}{\Cl^\infty}
\newcommand{\F}{\mathcal{F}}
\newcommand{\WF}{\mathrm{WF}}
\newcommand{\C}{\mathbb{C}}
\newcommand{\R}{\mathbb{R}}
\newcommand{\Z}{\mathbb{Z}}
\newcommand{\del}{\partial}
\newcommand{\N}{\mathrm{N}}
\newcommand{\T}{\mathrm{T}}
\renewcommand{\d}{\mathrm{d}}
\renewcommand{\Re}{\mathrm{Re}}
\renewcommand{\Im}{\mathrm{Im}}
\renewcommand{\epsilon}{\varepsilon}
\newtheorem{thm}{Theorem}[section]
\newtheorem{prob}[thm]{Problem}
\newtheorem{lemma}[thm]{Lemma}
\newtheorem{prop}[thm]{Proposition}
\newtheorem{cor}[thm]{Corollary}
\theoremstyle{definition}
\newtheorem{defn}[thm]{Definition}
\theoremstyle{remark}
\newtheorem{rmk}[thm]{Remark}
\begin{document}

\maketitle

\comment{

\begin{prob}
Let $\Omega \subset \R^N$ be an open set and $u,v \in C^\infty(\Omega)$ are the following conditions equivalent:
\begin{enumerate}
    \item $u,v \in C^{\mathcal{M}}(\Omega)$;
    \item For each $\omega \Subset \Omega$ there is $U \in C^2_c(\omega \times \R)$ such that $U(x,0) = u(x)$, $\del_y U(x,0) = v(x)$, $x \in \omega$, and there are constants $\varepsilon, C, Q > 0$ such that
    \[
    |\Delta U(x,y)| \leq Ch(Q|y|), \quad x \in \omega, |y| < \varepsilon
    \]
\end{enumerate}
\end{prob}

The warrior's path for (2) implies (1): let us assume first that $N \geq 2$, we have
\[
U(x,y) = - \int \dfrac{\Delta U(x^\prime,y^\prime)}{(N-1)\omega_{N+1}|(x - x^\prime,y - y^\prime)|^{N-1}} d x^\prime d y^\prime
\]
and differentiate $U(x,0)$. But first we recall the blessed Fa\`{a} di Bruno's Formula:
\begin{equation*}
    \frac{d^n}{dx^n}f\circ g(x)=\sum_{\ast}\dfrac{n!}{m_1!1!^{m_1}\cdots m_n!n!^{m_n}}f^{(m_1+\cdots+m_n)}(g(x))\big(g^{(1)}(x)\big)^{m_1}\cdots\big(g^{(n)}(x)\big)^{m_n},
\end{equation*}
where $\ast=\{m_1+2m_2+\cdots+nm_n=n\}$
\begin{align*}
\dfrac{\del^{\alpha_1} u}{\del x_1^{\alpha_1}} 
&= -\int \dfrac{\Delta U(x^\prime,y^\prime)}{(N-1)\omega_{N+1}} \dfrac{\del^{\alpha_1}}{\del x_1^{\alpha_1}} \bigg[\dfrac{1}{|(x - x^\prime, - y^\prime)|^{N-1}}\bigg] d x^\prime d y^\prime\\
&= -\int \dfrac{\Delta U(x^\prime,y^\prime)}{(N-1)\omega_{N+1}} \sum_{m^1_1+2m^1_2=\alpha_1}\dfrac{\alpha_1!}{m^1_1!m^1_2!2^{m^1_2}}f^{(m^1_1+m^1_2)}(g(x))(2x_1)^{m^1_1}2^{m^1_2} d x^\prime d y^\prime\\
\end{align*}
By the Faà di Bruno formula
\[
g(x_1) = (x_1 - x^\prime_1)^2 + \cdots + (x_N - x^\prime_N)^2 + (y^\prime)^2
\]
\[
f(t) = t^{\frac{1-N}{2}}
\]
\[
f^{(j)}(t)=\frac{1-N}{2}\bigg(\frac{1-N}{2}-1\bigg)\cdots\bigg(\frac{1-N}{2}-j+1\bigg)t^{\frac{1-N}{2}-j}
\]
\begin{multline*}
\del^{\alpha} u =
- \int \dfrac{\Delta U(x^\prime,y^\prime)}{(N-1)\omega_{N+1}} \sum_{m^1_1+2m^1_2=\alpha_1} \cdots \sum_{m^N_1+2m^N_2=\alpha_N}\dfrac{\alpha!}{m^1_1!m^1_2!\cdots m^N_1!m^N_2!}f^{(m^1_1 + m^1_2 + \cdots + m^N_1 + m^N_2)}(g(x))(2x_1)^{m^1_1} \cdots (2x_N)^{m^N_1} d x^\prime d y^\prime    
\end{multline*}

\newpage

Now we must estimate:

\begin{multline*}
    \bigg|\sum_{m^1_1+2m^1_2=\alpha_1} \cdots \sum_{m^N_1+2m^N_2=\alpha_N}\dfrac{\alpha!}{m^1_1!m^1_2!\cdots m^N_1!m^N_2!}f^{(m^1_1 + m^1_2 + \cdots + m^N_1 + m^N_2)}(g(x))(2x_1)^{m^1_1} \cdots (2x_N)^{m^N_1}\bigg|\leq\\
    \sum_{m^1_1+2m^1_2=\alpha_1} \cdots \sum_{m^N_1+2m^N_2=\alpha_N}\dfrac{\alpha!}{m^1_1!m^1_2!\cdots m^N_1!m^N_2!}\big|f^{(m^1_1 + m^1_2 + \cdots + m^N_1 + m^N_2)}(g(x))\big|(2|x_1|)^{m^1_1} \cdots (2|x_N|)^{m^N_1}=\\
    \sum_{m^1_1+2m^1_2=\alpha_1} \cdots \sum_{m^N_1+2m^N_2=\alpha_N}\dfrac{\alpha!}{m^1_1!m^1_2!\cdots m^N_1!m^N_2!}\bigg|\frac{1-N}{2}\bigg|\cdots\bigg|\frac{1-N}{2}-m_1^1-m_2^1-\cdots m_1^N-m_2^N\bigg|||x-x^\prime|^2+|y^\prime|^2|^{\frac{1-N}{2}-m_1^1-m_2^1-\cdots-m_1^N-m_2^N}\cdot\\ 
    \cdot(2|x_1|)^{m^1_1} \cdots (2|x_N|)^{m^N_1}\leq\\
     \sum_{m^1_1+2m^1_2=\alpha_1} \cdots \sum_{m^N_1+2m^N_2=\alpha_N}\dfrac{\alpha!}{m^1_1!m^1_2!\cdots m^N_1!m^N_2!}N!(m_1^1+m_2^1+\cdots m_1^N+m_2^N)!||x-x^\prime|^2+|y^\prime|^2|^{\frac{1-N}{2}-m_1^1-m_2^1-\cdots-m_1^N-m_2^N}\cdot\\ 
    \cdot(2|x_1|)^{m^1_1} \cdots (2|x_N|)^{m^N_1}\leq\\
   \sum_{m^1_1+2m^1_2=\alpha_1} \cdots \sum_{m^N_1+2m^N_2=\alpha_N}\dfrac{\alpha!}{m^1_1!m^1_2!^2\cdots m^N_1!m^N_2!^2}N!(m_1^1+m_2^1+\cdots+ m_1^N+m_2^N)!m_2^1!\cdots m_2^N!\cdot\\ 
    \cdot||x-x^\prime|^2+|y^\prime|^2|^{\frac{1-N}{2}-m_1^1-m_2^1-\cdots-m_1^N-m_2^N}(2|x_1|)^{m^1_1} \cdots (2|x_N|)^{m^N_1}\leq\\
    C_1^{|\alpha|}\sum_{m^1_1+2m^1_2=\alpha_1} \cdots \sum_{m^N_1+2m^N_2=\alpha_N}\dfrac{\alpha!}{m^1_1!(2m^1_2)!\cdots m^N_1!(2m^N_2)!}N!(m_1^1+2m_2^1+\cdots +m_1^N+2m_2^N)!\cdot\\ 
    \cdot||x-x^\prime|^2+|y^\prime|^2|^{\frac{1-N}{2}-m_1^1-m_2^1-\cdots-m_1^N-m_2^N}(2|x_1|)^{m^1_1} \cdots (2|x_N|)^{m^N_1}\leq\\
     C_2^{|\alpha|}N!|\alpha|!\sum_{m^1_1+2m^1_2=\alpha_1} \cdots \sum_{m^N_1+2m^N_2=\alpha_N}\dfrac{\alpha!}{m^1_1!(2m^1_2)!\cdots m^N_1!(2m^N_2)!}||x-x^\prime|^2+|y^\prime|^2|^{\frac{1-N}{2}-m_1^1-m_2^1-\cdots-m_1^N-m_2^N}
\end{multline*}
Since
\begin{align*}
||x-x^\prime|^2+|y^\prime|^2|^{\frac{1-N}{2}-m_1^1-m_2^1-\cdots-m_1^N-m_2^N}&=||x-x^\prime|^2+|y^\prime|^2|^{\frac{1-N}{2}-|\alpha|/2 - m_1^1/2-\cdots-m_1^N/2}\\
&=
    ||x-x^\prime|^2+|y^\prime|^2|^{\frac{1-N}{2}-|\alpha| + m_2^1+\cdots+m_2^N}\\
    &\leq C_3^{|\alpha|}|y^\prime|^{1-N-2|\alpha|}
\end{align*}
we have
\begin{multline*}
     \Bigg|\sum_{m^1_1+2m^1_2=\alpha_1} \cdots \sum_{m^N_1+2m^N_2=\alpha_N}\dfrac{\alpha!}{m^1_1!m^1_2!\cdots m^N_1!m^N_2!}f^{(m^1_1 + m^1_2 + \cdots + m^N_1 + m^N_2)}(g(x))(2x_1)^{m^1_1} \cdots (2x_N)^{m^N_1}\Bigg|\leq\\
     \leq C_2^{|\alpha|}N!|\alpha|!\sum_{m^1_1+2m^1_2=\alpha_1} \cdots \sum_{m^N_1+2m^N_2=\alpha_N}\dfrac{\alpha!}{m^1_1!(2m^1_2)!\cdots m^N_1!(2m^N_2)!}C_3^{|\alpha|}|y^\prime|^{1-N-2|\alpha|}\leq\\
     \leq C_4^{|\alpha|}N!|\alpha|!\sum_{m^1_1+m^1_2=\alpha_1} \cdots \sum_{m^N_1+m^N_2=\alpha_N}\dfrac{\alpha!}{m^1_1!(m^1_2)!\cdots m^N_1!(m^N_2)!}|y^\prime|^{1-N-2|\alpha|}\leq\\
     \leq C_4^{|\alpha|}N!|\alpha|!2^{|\alpha|} |y^\prime|^{1-N-2|\alpha|}
\end{multline*}
therefore
\begin{align*}
\big|\del^{\alpha} u\big| &\leq C_4^{|\alpha|}N!|\alpha|!2^{|\alpha|}
 \int \bigg|\dfrac{\Delta U(x^\prime,y^\prime)}{(N-1)\omega_{N+1}}\bigg| |y^\prime|^{1-N-2|\alpha|}  d x^\prime d y^\prime\leq\\
 &\leq  C_5^{|\alpha|}N!|\alpha|!\int \dfrac{C^{k+1}M_k}{k!}|y^\prime|^{k+1-N-2|\alpha|}\d y^\prime
\end{align*}

now taking $k=N+2|\alpha|-1$ we have that

\begin{align*}
    \big|\del^{\alpha} u\big| &\leq \dfrac{ C_6^{|\alpha|+1}N!|\alpha|!C^{N+2|\alpha|}M_{2|\alpha|+N-1}}{|2|\alpha|+N-1|!}
\end{align*}

\begin{align*}
    \del_x\dfrac{1}{\sqrt{x^2+y^2}}&=\del_x (x^2+y^2)^{-\frac 1 2}\\
    &=x(x^2+y^2)^{-\frac{3}{2}}\\
    &=\dfrac{-x}{(x^2+y^2)^{\frac 3 2}}
\end{align*}

\begin{align*}
    \del_x\dfrac{1}{x+y}=\dfrac{1}{(x+y)^2}
\end{align*}

\begin{align*}
    \del_x\dfrac{1}{x^2+y^2}&=\dfrac{-x}{(x^2+y^2)^2}\\
\end{align*}

\begin{align*}
    \del^2_x\dfrac{1}{x^2+y^2}&=\del_x\dfrac{-x}{(x^2+y^2)^2}\\
    &=\dfrac{-1}{(x^2+y^2)^2}-2x(x^2+y^2)^{-3}2x\\
    &=\dfrac{-1}{(x^2+y^2)^2}-\dfrac{2x^2}{(x^2+y^2)^{3}}
\end{align*}

\begin{align*}
u^{\sharp}(x,y)   &= \sum_{k=0}^\infty u_k(x)y^k              \\
u_{2k}(x)         &= (-1)^k \dfrac{\Delta^k_x u(x)}{(2k)!}    \\
u_{2k+1}(x)       &= (-1)^k \dfrac{\Delta^k_x v(x)}{(2k+1)!}  \\
\Delta u^{\sharp} &= 0
\end{align*}

Estimating $u_n(x)$:  We have two cases, $n=2k$ or $n=2k+1$. In the first case we have that

\begin{align*}
    \big|u_{2k}(x)\big| &= \dfrac{\big|\Delta^{k}_x u(x) \big|}{(2k)!} \\
    & \leq \dfrac{C^{2k+1}M_{2k}}{(2k)!} \\
    &= \dfrac{C_1^{n+1}M_{n}}{n!}
\end{align*}

Analogously if $n=2k+1$ we have that

\begin{align*}
    \big|u_{2k+1}(x)\big| &= \dfrac{\big|\Delta^{k}_x v(x) \big|}{(2k+1)!} \\
    & \leq \dfrac{C^{2k+1}M_{2k}}{(2k+1)!} \\
    &\leq \dfrac{C^{n+1}M_{n}}{n!}
\end{align*}

Conclusion: $\big|u_n(x)\big|\leq C^{n+1}M_n/n!$.
\[ 
u(x,y) = \dfrac{i}{2y^2} \int_\C \psi\left(\dfrac{z-y}{|y|}\right)\sum_{k=0}^{\N((1+\varepsilon)C^2c^\kappa|z|)}u_k(x)z^k \d z \wedge \d \bar{z}
\]

Estimating $\Delta_x u_n(x)$: We have two cases, $n=2k$ or $n=2k+1$. In the first case we have that

\begin{align*}
    \big|\Delta_x u_{2k}(x)\big| &= \dfrac{\big|\Delta^{k+1}_x u(x) \big|}{(2k)!} \\
    & \leq \dfrac{C^{2k+2}M_{2k+2}}{(2k)!} \\
    &= \dfrac{C^{n+2}M_{n+2}}{n!}
\end{align*}

Analogously if $n=2k+1$ we have that

\begin{align*}
    \big|\Delta_x u_{2k+1}(x)\big| &= \dfrac{\big|\Delta^{k+1}_x v(x) \big|}{(2k+1)!} \\
    & \leq \dfrac{C^{2k+2}M_{2k+2}}{(2k+1)!} \\
    &\leq \dfrac{C^{n+2}M_{n+2}}{n!}
\end{align*}

Conclusion: $\big|\Delta_x u_n(x)\big|\leq C^{n+2}M_{n+2}/n!$.
\[ 
u(x,y) = \dfrac{i}{2y^2} \int_\C \psi\left(\dfrac{z-y}{|y|}\right)\sum_{k=0}^{\N((1+\varepsilon)C^2c^\kappa|z|)}u_k(x)z^k \d z \wedge \d \bar{z}
\]
\begin{multline*}
    \Delta u(x,y)=\Delta\bigg\{\dfrac{i}{2y^2} \int_\C \psi\left(\dfrac{z-y}{|y|}\right)\sum_{k=0}^{n}u_k(x)z^k \d z \wedge \d \bar{z}\bigg\}
    +\Delta\bigg\{\dfrac{i}{2y^2} \int_\C \psi\left(\dfrac{z-y}{|y|}\right)\sum_{k=n+1}^{\N((1+\varepsilon)C^2c^\kappa|z|)}u_k(x)z^k \d z \wedge \d \bar{z}\bigg\}=\\
    =\Delta\bigg\{\sum_{k=0}^nu_k(x)y^k\bigg\} +
    \dfrac{i}{2}\int_{\C}\del_y^2\bigg\{\dfrac{1}{y^2}\psi\left(\dfrac{z-y}{|y|}\right)\bigg\}\sum_{k=n+1}^{\N((1+\varepsilon)C^2c^\kappa|z|)}u_k(x)z^k \d z \wedge \d \bar{z} + \\
    \dfrac{i}{2y^2} \int_\C \psi\left(\dfrac{z-y}{|y|}\right)\sum_{k=n+1}^{\N((1+\varepsilon)C^2c^\kappa|z|)}\Delta_x u_k(x)z^k \d z \wedge \d \bar{z}=\\
    =\underbrace{\Delta_xu_{n-1}(x)y^{n-1}+\Delta_xu_n(x)y^n}_{(1)}+\underbrace{ \dfrac{i}{2}\int_{\C}\del_y^2\bigg\{\dfrac{1}{y^2}\psi\left(\dfrac{z-y}{|y|}\right)\bigg\}\sum_{k=n+1}^{\N((1+\varepsilon)C^2c^\kappa|z|)}u_k(x)z^k \d z \wedge \d \bar{z}}_{(2)}+\\
     \underbrace{\dfrac{i}{2y^2} \int_\C \psi\left(\dfrac{z-y}{|y|}\right)\sum_{k=n+1}^{\N((1+\varepsilon)C^2c^\kappa|z|)}\Delta_x u_k(x)z^k \d z \wedge \d \bar{z}}_{(3)}
\end{multline*}

Estimating $(1)$:

\begin{align*}
    \big|(1)\big|\leq \dfrac{C^{n}M_{n-1}}{(n-1)!}y^{n-1}+\dfrac{C^{n+1}M_n}{n!}y^n
\end{align*}

By simple computations (notice that $|z|\leq (1+\varepsilon)|t|$) one can show that
\[
\left|\del_y^2\left[\dfrac{1}{t^2}\psi\left(\dfrac{z-t}{|t|}\right)\right]\right|\leq \dfrac{C_1}{|t|^4},
\]
for some positive constant $C_1$.

\begin{align*}
\sum_{k=n+1}^{\N((1+\varepsilon)C^2c^\kappa|z|)}|u_k(x)||z|^k & \leq \sum_{k=n+1}^{\N((1+\varepsilon)C^2c^\kappa|z|)}C\dfrac{M_k}{k!}\left((1+\varepsilon)C|z|\right)^k \dfrac{1}{(1+\varepsilon)^k} \\
&= \sum_{k=n+1}^{\N((1+\varepsilon)C^2c^\kappa|z|)}C\dfrac{M_k}{k!}\left((1+\varepsilon)C^2c^\kappa|z|\right)^k \dfrac{1}{(Cc^\kappa(1+\varepsilon))^k}\\
& \leq C_2 M_{n+1} \dfrac{C^{2n+3}c^{\kappa(n+1)}(1+\varepsilon)^{2(n+1)}|t|^{n+1}}{(n+1)!} \\
& = C_2 (1+\varepsilon)^2|t|C^3c^\kappa h_1\!\left((1+\varepsilon)^2C^2c^\kappa|t|\right)\\
&\leq C_3 h_1\!\left((1+\varepsilon)^2C^2c^\kappa|t|\right),\\
\end{align*}

\begin{align*}
\sum_{k=n+1}^{\N((1+\varepsilon)C^2c^\kappa|z|)}|\Delta u_k(x, \zeta)||z|^k & \leq \widetilde{C}\sum_{k=n+1}^{\N((1+\varepsilon)C^2c^\kappa|z|)}C^2\dfrac{M_{k+2}}{k!}\left((1+\varepsilon)C|z|\right)^k \dfrac{1}{(1+\varepsilon)^k}\\
&\leq \widetilde{C}\sum_{k=n+1}^{\N((1+\varepsilon)C^2c^\kappa|z|)}C^2\dfrac{M_k}{k!}\left(c^2(1+\varepsilon)C|z|\right)^k \dfrac{1}{((1+\varepsilon))^k}\\
&\leq \widetilde{C}\sum_{k=n+1}^{\N((1+\varepsilon)C^2c^\kappa|z|)}C^2\dfrac{M_k}{k!}\left((1+\varepsilon)C^2c^\kappa|z|\right)^k \dfrac{1}{(Cc^{\kappa-2}(1+\varepsilon))^k}\\
& \leq C_4 M_{n+1} \dfrac{(1+\varepsilon)^{2(n+1)}(C^2c^\kappa)^{n+1}|t|^{n+1}}{(n+1)!} \\
& = C_4 (1+\varepsilon)^2C^2c^\kappa|t| h_1\!\left((1+\varepsilon)^2C^2c^\kappa|t|\right)\\
&\leq C_5h_1\!\left((1+\varepsilon)^2C^2c^\kappa|t|\right)
\end{align*}
\newpage
\begin{prob}
Let $\Omega \subset \R^N$ be an open set and $u_r \in C^\infty(\Omega)$ for $0 \leq r < m$.
Let $Q(x,y,\del) = \del_y^m+P(x,\del_x)$ be a convenient $C^\mathcal{M}$-smooth linear partial differential operator on $\Omega \times \R$, with the order of $P(x,\del_x)$ being $b$.
The following conditions are equivalent:
\begin{enumerate}
    \item The function $u_r$ is $C^\mathcal{M}$-vector of $P(x,\del_x)$ for $0 \leq r < m$;
    \item For each $\omega \Subset \Omega$ there is $u \in C^\infty(\omega \times (\R\setminus{0}))$ and there are constants $C, \delta, \lambda > 0$ such that
    \[
    \big|Q(x,y,\del) u(x,y)\big| \leq Ch_{\frac{m}{b}}(\lambda|y|), \quad |y| < \delta,
    \]
    with
    \[
    \lim_{y \to 0}\del_y^r u(x,y) = u_r(x), \quad 0 \leq r < m,
    \]
    and
    \[
    |\del_y^{mk}u(x,y)| \leq C^{k}M_{mk}+O(|y|), \quad k \in \Z_+,
    \]
    for all $x \in \omega$.
\end{enumerate}\end{prob}

\begin{proof}[Sketch of the proof]

\begin{align*}
u^\sharp(x,y)  &=\sum_{k=0}^\infty \hat{u}_k(x)y^k= \sum_{r=0}^{m-1}u_r^\sharp(x,y)\\
u_r^\sharp(x,y)&= \sum_{q=0}^\infty \dfrac{(-1)^q P(x,\del_x)^qu_r(x)}{(qm+r)!} y^{qm+r} \\
Q(x,y,\del)u^\sharp_r &= 0 \\
Q(x,y,\del)u^\sharp &= 0
\end{align*}

\[ 
u(x,y) = \dfrac{i}{2y^2} \int_\C \psi\left(\dfrac{z-y}{|y|}\right)\sum_{k=0}^{\N((1+\varepsilon)C^2c^\kappa|z|)}\hat{u}_k(x)z^k \d z \wedge \d \bar{z}
\]

\begin{multline*}
    Q(x,y,\del) u(x,y)=Q(x,y,\del)\bigg\{\dfrac{i}{2y^2} \int_\C \psi\left(\dfrac{z-y}{|y|}\right)\sum_{k=0}^{n}\hat{u}_k(x)z^k \d z \wedge \d \bar{z}\bigg\}
    +Q(x,y,\del)\bigg\{\dfrac{i}{2y^2} \int_\C \psi\left(\dfrac{z-y}{|y|}\right)\sum_{k=n+1}^{\N((1+\varepsilon)C^2c^\kappa|z|)}\hat{u}_k(x)z^k \d z \wedge \d \bar{z}\bigg\}=\\
    =Q(x,y,\del)\bigg\{\sum_{k=0}^n\hat{u}_k(x)y^k\bigg\} +
    \dfrac{i}{2}\int_{\C}\del_y^m\bigg\{\dfrac{1}{y^2}\psi\left(\dfrac{z-y}{|y|}\right)\bigg\}\sum_{k=n+1}^{\N((1+\varepsilon)C^2c^\kappa|z|)}\hat{u}_k(x)z^k \d z \wedge \d \bar{z} + \\
    \dfrac{i}{2y^2} \int_\C \psi\left(\dfrac{z-y}{|y|}\right)\sum_{k=n+1}^{\N((1+\varepsilon)C^2c^\kappa|z|)}P(x,\del_x) \hat{u}_k(x)z^k \d z \wedge \d \bar{z}=\\
    =\underbrace{P(x,\del)\hat{u}_{n-m+1}(x)y^{n-m-1}+\cdots+P(x,\del)\hat{u}_n(x)y^n}_{(1)}+\underbrace{ \dfrac{i}{2}\int_{\C}\del_y^m\bigg\{\dfrac{1}{y^2}\psi\left(\dfrac{z-y}{|y|}\right)\bigg\}\sum_{k=n+1}^{\N((1+\varepsilon)C^2c^\kappa|z|)}\hat{u}_k(x)z^k \d z \wedge \d \bar{z}}_{(2)}+\\
     \underbrace{\dfrac{i}{2y^2} \int_\C \psi\left(\dfrac{z-y}{|y|}\right)\sum_{k=n+1}^{\N((1+\varepsilon)C^2c^\kappa|z|)}P(x,\del) \hat{u}_k(x)z^k \d z \wedge \d \bar{z}}_{(3)}
\end{multline*}

\[
P(x,\del_x) \hat{u}_{qm+r}(x) = \dfrac{(-1)^q P(x,\del_x)^{q+1}u_r(x)}{(qm+r)!}
\]

\begin{align*}
\big|P(x,\del_x) \hat{u}_{qm+r}(x)\big| &\leq C^{(q+1)b+1}\frac{M_{(q+1)b}}{(qm+r)!}\\
&=C^{(q+1)b+1}\frac{M_{qmb/m+b}}{(qm+r)!}
% \\
% &\leq C^{(qm+r)b/m+1}\frac{M_{(qm+r)b/m}}{(qm+r)!}
\end{align*}

Estimating $(1)$:

\begin{align*}
    \big|(1)\big|\leq \dfrac{C^{bn/m+1}M_{bn/m}}{(n)!}\big(|y|^{m/b}\big)^{bn/m}
\end{align*}

By simple computations (notice that $|z|\leq (1+\varepsilon)|t|$) one can show that
\[
\left|\del_y^m\left[\dfrac{1}{t^2}\psi\left(\dfrac{z-t}{|t|}\right)\right]\right|\leq \dfrac{C_1}{|t|^{m+2}},
\]
for some positive constant $C_1$.

\begin{align*}
\sum_{k=n+1}^{\N((1+\varepsilon)C^2c^\kappa|z|)}|u_k(x)||z|^k & \leq \sum_{k=n+1}^{\N((1+\varepsilon)C^2c^\kappa|z|)}C\dfrac{M_k}{k!}\left((1+\varepsilon)C|z|\right)^k \dfrac{1}{(1+\varepsilon)^k} \\
&= \sum_{k=n+1}^{\N((1+\varepsilon)C^2c^\kappa|z|)}C\dfrac{M_k}{k!}\left((1+\varepsilon)C^2c^\kappa|z|\right)^k \dfrac{1}{(Cc^\kappa(1+\varepsilon))^k}\\
& \leq C_2 M_{n+1} \dfrac{C^{2n+3}c^{\kappa(n+1)}(1+\varepsilon)^{2(n+1)}|t|^{n+1}}{(n+1)!} \\
& = C_2 (1+\varepsilon)^2|t|C^3c^\kappa h_1\!\left((1+\varepsilon)^2C^2c^\kappa|t|\right)\\
&\leq C_3 h_1\!\left((1+\varepsilon)^2C^2c^\kappa|t|\right),\\
\end{align*}

\begin{align*}
\sum_{k=n+1}^{\N((1+\varepsilon)C^2c^\kappa|z|)}|\Delta u_k(x, \zeta)||z|^k & \leq \widetilde{C}\sum_{k=n+1}^{\N((1+\varepsilon)C^2c^\kappa|z|)}C^2\dfrac{M_{k+2}}{k!}\left((1+\varepsilon)C|z|\right)^k \dfrac{1}{(1+\varepsilon)^k}\\
&\leq \widetilde{C}\sum_{k=n+1}^{\N((1+\varepsilon)C^2c^\kappa|z|)}C^2\dfrac{M_k}{k!}\left(c^2(1+\varepsilon)C|z|\right)^k \dfrac{1}{((1+\varepsilon))^k}\\
&\leq \widetilde{C}\sum_{k=n+1}^{\N((1+\varepsilon)C^2c^\kappa|z|)}C^2\dfrac{M_k}{k!}\left((1+\varepsilon)C^2c^\kappa|z|\right)^k \dfrac{1}{(Cc^{\kappa-2}(1+\varepsilon))^k}\\
& \leq C_4 M_{n+1} \dfrac{(1+\varepsilon)^{2(n+1)}(C^2c^\kappa)^{n+1}|t|^{n+1}}{(n+1)!} \\
& = C_4 (1+\varepsilon)^2C^2c^\kappa|t| h_1\!\left((1+\varepsilon)^2C^2c^\kappa|t|\right)\\
&\leq C_5h_1\!\left((1+\varepsilon)^2C^2c^\kappa|t|\right)
\end{align*}

\end{proof}

\newpage

\begin{prob}
Let $\Omega \subset \R^N$ be an open set, 
let $P(x,\del_x)$ be a $m^{\text{th}}$-order $C^\mathcal{M}$-smooth linear partial differential operator on $\Omega$ 
and $u_r \in C^\infty(\Omega)$ for $0 \leq r < m$.
Define $Q(x,y,\del) = \del_y^m+P(x,\del_x)$ on $\Omega \times \R$.
Let $\kappa \in \Z_+$, $\kappa \geq m$.
The following conditions are equivalent:
\begin{enumerate}
    \item Each function $u_r$ is a $C^\mathcal{M}$-vector of $P(x,\del_x)$ for $0 \leq r < m$;
    \item For each $\omega \Subset \Omega$ there is $u \in C^\infty(\omega \times (\R\setminus{0}))$ and there are constants $C, \delta, \lambda > 0$ such that
    \[
    \sup_{x \in \omega}\big|Q(x,y,\del) u(x,y)\big| \leq Ch_1(\lambda|y|), \quad |y| < \delta,
    \]
    with
    \[
    \lim_{y \to 0}\del_y^r u(\;\cdot\;,y) = u_r
    \]
    in the $C^\kappa$-topology for each $0 \leq r < m$ and
    \[
    \sup_{x \in \omega}|\del_y^{qm}u(x,y)| \leq C^{q}M_{qm}+O(|y|),
    \]
    for all $q \in \Z_+$.
\end{enumerate}\end{prob}

\begin{proof}[Sketch of the proof]
Let $\omega \Subset \Omega$ be given.
There is $C > 1$ such that for any $q \in \Z_+$ and $0 \leq r < m$
\[
\sup_\omega \big|P(x,\del_x)^qu_r\big| \leq C^{q+1}M_{qm}.
\]
Solving formally the problem
\begin{align*}
    Q(x,y,\del)u^\sharp &= 0, \\
    \del_y^r u^\sharp(x,0) &= u_r(x), \qquad 0 \leq r < m,
\end{align*}
one gets
\begin{align*}
u^\sharp(x,y)  &=\sum_{k=0}^\infty \hat{u}_k(x)y^k= \sum_{r=0}^{m-1}u_r^\sharp(x,y),\\
u_r^\sharp(x,y)&= \sum_{q=0}^\infty \dfrac{(-1)^q P(x,\del_x)^qu_r(x)}{(qm+r)!} y^{qm+r}.
\end{align*}
Matter of fact, we have $Q(x,y,\del)u^\sharp_r = 0$.
Let $\varepsilon > 0$ be given and let $\psi \in C^\infty_c(D_\varepsilon(0))$ be a cutoff function such that $\psi \geq 0$, $\psi(z) = \psi(|z|)$ for all $z$, and
\[
\frac{i}{2}\int_\C \psi(z) \d z \wedge \d \bar{z} = 1.
\] 
For every $(x,y) \in \omega \times (\R \setminus 0)$, set
\[ 
u(x,y) = \dfrac{i}{2y^2} \int_\C \psi\left(\dfrac{z-y}{|y|}\right)\sum_{k=0}^{\N((1+\varepsilon)Cc^\kappa|z|)}\hat{u}_k(x)z^k \d z \wedge \d \bar{z}.
\]
We have
\begin{align}
    Q(x,y,\del) u(x,y) 
    \nonumber
    &=
    Q(x,y,\del)\bigg\{\dfrac{i}{2y^2} \int_\C \psi\left(\dfrac{z-y}{|y|}\right)\sum_{k=0}^{n}\hat{u}_k(x)z^k \d z \wedge \d \bar{z}\bigg\}
    \nonumber
    \\
    &\qquad+
    Q(x,y,\del)\bigg\{\dfrac{i}{2y^2} \int_\C \psi\left(\dfrac{z-y}{|y|}\right)\sum_{k=n+1}^{\N((1+\varepsilon)Cc^\kappa|z|)}\hat{u}_k(x)z^k \d z \wedge \d \bar{z}\bigg\}
    \nonumber
    \\
    &=
    Q(x,y,\del)\bigg\{\sum_{k=0}^n\hat{u}_k(x)y^k\bigg\}
    \nonumber
    \\
    &\qquad+
    \dfrac{i}{2}\int_{\C}\del_y^m\bigg\{\dfrac{1}{y^2}\psi\left(\dfrac{z-y}{|y|}\right)\bigg\}\sum_{k=n+1}^{\N((1+\varepsilon)Cc^\kappa|z|)}\hat{u}_k(x)z^k \d z \wedge \d \bar{z} 
    \nonumber
    \\
    &\qquad+
    \dfrac{i}{2y^2} \int_\C \psi\left(\dfrac{z-y}{|y|}\right)\sum_{k=n+1}^{\N((1+\varepsilon)Cc^\kappa|z|)}P(x,\del_x) \hat{u}_k(x)z^k \d z \wedge \d \bar{z}
    \nonumber
    \\
    &=
    P(x,\del_x)\hat{u}_{n-m+1}(x)y^{n-m+1}+\cdots+P(x,\del_x)\hat{u}_n(x)y^n
    \\
    &\qquad+
    \dfrac{i}{2}\int_{\C}\del_y^m\bigg\{\dfrac{1}{y^2}\psi\left(\dfrac{z-y}{|y|}\right)\bigg\}\sum_{k=n+1}^{\N((1+\varepsilon)Cc^\kappa|z|)}\hat{u}_k(x)z^k \d z \wedge \d \bar{z}
    \\
    &\qquad+
    \dfrac{i}{2y^2} \int_\C \psi\left(\dfrac{z-y}{|y|}\right)\sum_{k=n+1}^{\N((1+\varepsilon)Cc^\kappa|z|)}P(x,\del_x) \hat{u}_k(x)z^k \d z \wedge \d \bar{z}
\end{align}
Setting $k=qm+r$, with $0 \leq r < m$, we get
\[
P(x,\del_x) \hat{u}_{k}(x) = \dfrac{(-1)^q P(x,\del_x)^{q+1}u_r(x)}{k!}
\]
thus
\begin{align*}
\big|P(x,\del_x) \hat{u}_k(x)\big| 
&\leq 
C^{q+2}\frac{M_{(q+1)m}}{k!} 
\\
&\leq 
C^{q+2}c^{qm+m}c^{qm+(m-1)} \cdots c^{qm+r+1}\frac{M_{qm+r}}{k!}
\\
&=
C^{q+2}c^{qm(m-r)+(m+r+1)(m-r)/2}\frac{M_k}{k!}
\\
&\leq 
C^{k+2}c^{mk+(m+1)m/2}\frac{M_k}{k!}
\end{align*}
Fix $n = \N\big((1+\varepsilon)^2Cc^\kappa|y|\big)-1$.
We estimate $(1)$
\begin{align*}
    \big|(1)\big| &\leq \sum_{j=0}^{m-1} C^{n-j+1}c^{(n-j)m + (m+1)m/2}\frac{M_{n-j}}{(n-j)!}|y^{n-j}|
    \\
    &\leq 
    C^{n+1}c^{nm+(m+1)m/2}\frac{M_n}{n!}|y|^n \sum_{j=0}^{m-1}|y|^{-j}
    \\
    &\leq 
    C^{n+1}c^{nm+(m+1)m/2}\frac{M_{n+1}}{(n+1)!}|y|^n \sum_{j=0}^{m-1}|y|^{-j}
\end{align*}
By simple computations (notice that $|z|\leq (1+\varepsilon)|y|$) one can show that
\[
\left|\del_y^m\left[\dfrac{1}{y^2}\psi\left(\dfrac{z-y}{|y|}\right)\right]\right|\leq \dfrac{C_1}{|y|^{m+2}},
\]
for some positive constant $C_1$.
By Lemma \ref{lem:mkrk}, for $n < k \leq \N((1+\varepsilon)Cc^\kappa|z|)$ we have
\[
\dfrac{M_k}{k!}\left((1+\varepsilon)Cc^\kappa|z|\right)^k \leq \dfrac{M_{n+1}}{(n+1)!} \left((1+\varepsilon)Cc^\kappa|z|\right)^{n+1},
\]
thus, we may estimate $(2)$
\begin{align*}
\sum_{k=n+1}^{\N((1+\varepsilon)Cc^\kappa|z|)}
|\hat{u}_k(x)||z|^k 
&\leq
C\sum_{k=n+1}^{\N((1+\varepsilon)Cc^\kappa|z|)}
\dfrac{M_k}{k!}\left((1+\varepsilon)C|z|\right)^k \dfrac{1}{(1+\varepsilon)^k}
\\
&= 
C\sum_{k=n+1}^{\N((1+\varepsilon)Cc^\kappa|z|)}
\dfrac{M_k}{k!}\left((1+\varepsilon)Cc^\kappa|z|\right)^k \dfrac{1}{(c^\kappa(1+\varepsilon))^k}
\\
&\leq 
C_2 \frac{M_{n+1}}{(n+1)!}(1+\varepsilon)^{2(n+1)}C^{n+1}c^{\kappa(n+1)}|y|^{n+1}
\\
&=
C_2 (1+\varepsilon)^2Cc^\kappa|y| h_1\big((1+\varepsilon)^2Cc^\kappa|y|\big)
\\
&\leq
C_3 h_1\big((1+\varepsilon)^2Cc^\kappa|y|\big),
\end{align*}
and $(3)$
\begin{align*}
\sum_{k=n+1}^{\N((1+\varepsilon)Cc^\kappa|z|)}
|P(x,\del_x) \hat{u}_k(x, \zeta)||z|^k 
&\leq 
C^2c^{m(m+1)/2}\sum_{k=n+1}^{\N((1+\varepsilon)Cc^\kappa|z|)}
\dfrac{M_k}{k!}\left(c^{m}(1+\varepsilon)C|z|\right)^k \dfrac{1}{(1+\varepsilon)^k}
\\
&\leq 
C^2c^{m(m+1)/2}\sum_{k=n+1}^{\N((1+\varepsilon)Cc^\kappa|z|)}
\dfrac{M_k}{k!}\left((1+\varepsilon)Cc^\kappa|z|\right)^k \dfrac{1}{(c^{\kappa-m}(1+\varepsilon))^k}
\\
&\leq 
C_4 \frac{M_{n+1}}{(n+1)!} (1+\varepsilon)^{2(n+1)}C^{n+1}c^{\kappa(n+1)}|y|^{n+1}
\\
&= 
C_4 (1+\varepsilon)^2Cc^\kappa|y| h_1\big((1+\varepsilon)^2Cc^\kappa|y|\big)
\\
&\leq 
C_5 h_1\!\left((1+\varepsilon)^2Cc^\kappa|y|\right)
\end{align*}
As for the remaining properties, for every $q \in \Z_+$ and $0 \leq r < m$, we have
\begin{align*}
     \del_y^{qm+r} u(x,y) - (-1)^qP(x,\del_x)^qu_r(x)
     &=
     \del_y^{qm+r} \bigg\{
        u(x,y) - \sum_{k=0}^{qm+r} \hat{u}_k(x)y^{k}
     \bigg\}
     \\
     &=
     \del_y^{qm+r} \Bigg\{
        \frac{i}{2t^2}\int_\C \psi\bigg(\frac{z-y}{|y|}\bigg)
        \sum_{k=qm+r+1}^{\N((1+\varepsilon)Cc^\kappa|z|)} \hat{u}_k(x)z^k \d z \wedge \d \bar{z}
     \Bigg\}
     \\
     &=
     \frac{i}{2}\int_\C\del_y^{qm+r} \bigg\{\frac{1}{t^2} \psi\bigg(\frac{z-y}{|y|}\bigg)\bigg\}
     \sum_{k=qm+r+1}^{\N((1+\varepsilon)Cc^\kappa|z|)}
     \hat{u}_k(x)z^k \d z \wedge \d\bar{z}
     \\
     &=
     \frac{i}{2}\int_\C g_{qm+r}(z,y)\sum_{k=qm+r+1}^{\N((1+\varepsilon)Cc^\kappa|z|)}
     \hat{u}_k(x)z^k \d z \wedge \d \bar{z}
     \\
     &=
     \frac{i}{2}\int_\C y^2 g_{qm+r}(|y|w+y,y)\sum_{k=qm+r+1}^{\N((1+\varepsilon)C^2c^\kappa||t|w+t|)}
     \hat{u}_k(x)(|y|w+y)^k \d w \wedge \d \bar{w},
\end{align*}
with
\[
y^2|g_n(|y|w+y,y)| \leq ``C_{10}"|y|^{-n},
\]
therefore
\begin{align*}
    \big|\del_y^{qm+r} u(x,y) - (-1)^qP(x,\del_x)^qu_r(x)\big|
    &\leq
    \frac{C_{10}}{2|y|^{qm+r}}\int_{|w|\leq\varepsilon}
    \sum_{k=qm+r+1}^{\N((1+\varepsilon)Cc^\kappa||y|w+y|)}
    \big|\hat{u}_k(x)\big|||y|w+y|^k| \d w \wedge \d \bar{w}|
    \\
    &\leq 
    \frac{C_{10}}{2|y|^{qm+r}}\int_{|w|\leq\varepsilon}
    \sum_{k=qm+r+1}^{\N((1+\varepsilon)Cc^\kappa||y|w+y|)}
    C^{k+1}\frac{M_k}{k!}||y|w+y|^k|\d w\wedge\d\bar{w}|
    \\
    &\leq
    \frac{C_{11}}{|y|^{qm+r}}\int_{|w|\leq\varepsilon}\sum_{k=qm+r+1}^{\N((1+\varepsilon)Cc^\kappa||y|w+y|)}(Cc^\kappa(1+\varepsilon))^{k}\frac{M_{k}}{k!}||y|w+y|^k 
    \frac{1}{(c^\kappa(1+\varepsilon))^k}|\d w\wedge\d\bar{w}|
    \\
    &\leq
    \frac{C_{12}}{|y|^{qm+r}}\int_{|w|\leq\varepsilon}(Cc^\kappa(1+\varepsilon))^{qm+r+1}\frac{M_{qm+r+1}}{(qm+r+1)!}||y|w+y|^{qm+r+1}| \d w \wedge \d \bar{w}|
    \\
    &\qquad \cdot \sum_{k=qm+r+1}^\infty
    \frac{1}{(c^\kappa(1+\varepsilon))^k}
    \\
    &\leq 
    \frac{C_{13}}{|y|^{qm+r}}\int_{|w|\leq \varepsilon}\frac{((1+\varepsilon)Cc^{\kappa})^{qm+r+1}M_{qm+r+1}}{(qm+r+1)!}||y|w+y|^{qm+r+1}| \d w \wedge \d \bar{w}|
    \\
    &\leq 
    C_{14}|y|.
\end{align*}
Obs:
\begin{enumerate}
    \item Acho que teremos que dispensar o $c^\kappa$ e considerar só convergencia $C^0$
    \item cadê o $(c^\kappa)^k$ no denominador do Approx...?
\end{enumerate}
\end{proof}

\newpage
}

\begin{abstract}
    We prove the existence of approximate solutions in the (regular) Denjoy-Carleman sense for some systems of smooth complex vector fields. 
    Such approximate solutions provide a well defined notion of Denjoy-Carleman wave front set of distributions on maximally real submanifolds in complex space which can be characterized in terms of the decay of the Fourier-Bros-Iagolnitzer transform. 
    We also apply the approximate solutions to analyze the Denjoy-Carleman microlocal regularity of solutions of certain systems of first-order nonlinear partial differential equations. 
\end{abstract}

\section{Introduction}

It is a well-known result that real-analytic functions have holomorphic extensions to the complex space. A similar result is valid for regular Denjoy-Carleman classes on $\mathbb{R}^m$. Given a regular sequence (see Definition \ref{def:regular_sequence}) $\mathcal{M} = (M_k)_{k = 0}^\infty$ we say that a smooth function $u$ is $\mathcal{M}$-Denjoy-Carleman, and we write $u \in \mathcal{C}^\mathcal{M}(\mathbb{R}^m)$, if for every complact set $K \subset \mathbb{R}^m$ there exists a positive constant $C > 0$ such that

\begin{equation*}
    \sup_{x \in K} |\partial^\alpha u(x)| \leq C^{|\alpha| + 1} M_{|\alpha|},
    \eqno{\forall \alpha \in \mathbb{Z}_+^m}.
\end{equation*}

\noindent In \cite{dyn'kin} Dyn'kin proved that a function $u$ belongs to $\mathcal{C}^\mathcal{M}(\mathbb{R}^m)$ if and only if for every point $p \in \mathbb{R}^m$ and relatively compact open neighborhood $U \subset \mathbb{R}^m$ of $p$, there exist an open neighborhood $\mathcal{O} \subset \mathbb{C}^m$ of $p$ with $\mathcal{O} \cap \mathbb{R}^m = U$, a function $F \in \mathcal{C}^\infty(\mathcal{O})$ and constants $C,\delta > 0$ such that

\begin{equation*}
    \begin{cases}
      F(x) = u(x);\\
      |\bar{\partial}_z F(x+iy)| \leq C^{k+1}\frac{M_k}{k!}|y|^k,
    \end{cases}
    \eqno{
    \begin{matrix}
        x \in U, & \\
        |y| \leq \delta, & k \in \Z_+.
    \end{matrix}}
\end{equation*}

\noindent The question that motivated the present paper is how to prove an analogous extension theorem replacing $\mathbb{R}^m$ by a maximally real submanifold of $\mathbb{C}^m$. So let $\Sigma \subset \mathbb{C}^m$ be a smooth maximally real submanifold passing through the origin, \textit{i.e}. $\Sigma$ is a real, smooth submanifold of $\mathbb{C}^m$ such that the differential of $z_1|_\Sigma, \dots, z_m|_\Sigma$ are $\mathbb{C}$-linear independent on $\Sigma$. This implies that there exists a family of pair-wise commuting, smooth, $\mathbb{C}$-linear independent, complex vector fields $\{ X_1, \dots, X_m \}$ satisfying $\mathrm{d}(z_j|_\Sigma) X_k = \delta_{jk}$. Since $\Sigma$ is a smooth manifold, we have distributions and smooth functions defined on it, but using the complex structure of the ambient space one can define "real-analytic" functions on $\Sigma$ (the actual name is hypo-analytic) as the restriction of holomorphic functions to $\Sigma$. One interesting aspect of this "real-analytic" (hypo-analytic) functions is that they satisfy some sort of Cauchy estimates for the $X_j$s, loosely speaking, if $u$ is a hypo-analytic function then

\begin{equation*}
    |X^\alpha u| \leq C^{|\alpha|+1} \alpha!,
\end{equation*}

\noindent for every $\alpha$. Actually the reciprocal is also true. So the same picture on $\mathbb{R}^m$ regarding real-analytic functions is valid on maximally real submanifolds of $\mathbb{C}^m$. Now since we have "real-analytic" functions on $\Sigma$, we can define via the estimates on the iterates of the $X_j$s the regular Denjoy-Carleman classes, and so one could ask if the same result proved by Dyn'kin is valid on $\Sigma$. The existence of extensions is valid with the same hypothesis, but for the reverse we needed an extra condition on the regular sequence, namely, the moderate growth condition. The reason why we needed this extra condition lies on the technique that we used. To prove the existence of extensions we were able to adapt Dyn'kin's proof, but to prove the other direction we had to employ the Fourier-Bros-Iagolnitzer transform, F.B.I. transform for short. 
In Dyn'kin's proof he uses the one dimension case to prove the multidimensional one, which we could not do on maximally real submanifolds of $\mathbb{C}^m$. 

In 1983 M. S. Baouendi, C. H. Chang and F. Treves \cite{baouendi1983microlocal} introduced the following F.B.I. transform on maximally-real submanifolds of $\mathbb{C}^m$: If $u \in \mathcal{E}^\prime(\Sigma)$ then for all $(z,\zeta) \in \mathbb{C}^m \times \mathcal{C}_1$,

\begin{equation*}
    \mathfrak{F}[u](z,\zeta) = \left \langle u(z^\prime), e^{i \zeta \cdot (z - z^\prime) - \langle \zeta \rangle \langle z - z^\prime \rangle^2} \Delta(z - z^\prime, \zeta)\right \rangle_{\mathcal{D}^\prime(\Sigma)},
\end{equation*}

\noindent where $\mathcal{C}_1 = \{ \zeta \in \mathbb{C}^m \; : \; |\Im \zeta | < |\Re \zeta| \}$, and $\Delta(z,\zeta)$ is the Jacobian of the map $\zeta \mapsto \zeta + i\langle \zeta \rangle z$. With this F.B.I. transform we were able to prove the following equivalence for smooth functions $u$:

\begin{enumerate}
    \item There exists $V_0 \subset \Sigma$ a neighborhood of the origin such that $u|_{V_0}\in\Cl^\mathcal{M}(V_0; X)$;
    \item There exist $\mathcal{O} \subset \mathbb{C}^m$ a neighborhood of the origin, a function $F \in \mathcal{C}^\infty(\mathcal{O})$, and a constant $C>0$ such that
    \begin{equation}
        \begin{dcases}
            F|_{\mathcal{O}\cap \Sigma} = u|_{\mathcal{O}\cap \Sigma}, \\
            \big|\bar{\partial}_z F(z)\big| \leq C^{k+1}m_k \mathrm{dist}(z,\Sigma)^k,\quad \forall k \in \mathbb{Z}_+ \forall z \in \mathcal{O}.
        \end{dcases}
    \end{equation}
    \item For every $\chi \in \Cl_c^\infty(\Sigma)$, with $0 \leq \chi \leq 1$ and $\chi \equiv 1$ in some open neighborhood of the origin, there exist $V \subset \Sigma$ a neighborhood of the origin and a constant $C > 0$ such that
    \begin{equation} \label{eq:FBI-decay}
        |\mathfrak{F}[\chi u](z,\zeta)| \leq 
        \frac{C^k m_k}{|\zeta|^k}, \qquad \forall (z,\zeta) \in \R\T^\prime_{V} \setminus 0,
    \end{equation}
\end{enumerate}

\noindent where $\mathbb{R} \mathrm{T}^\prime_\Sigma$ is the so-called real-structure bundle of $\Sigma$ (see the beginning of section \ref{sec:Denjoy-Carleman_vectors_maximally_real}). Actually we prove a microlocal version of this result.

As an application of the extension technique we were able to generalize the result obtained by the authors in \cite{braundasilva} to systems of first order non-linear partial differential equations. If $\Omega \subset \R^d \times \R^n$ is an open neighborhood of the origin, and $u \in \mathcal{C}^2(\Omega)$ is a solution of the nonlinear PDE
\begin{equation*}
    \dfrac{\del u}{\del t_j} = f_j\big(x,t,u,u_x\big),
    \eqno{ 1 \leq j \leq n,}
\end{equation*} 
where each $f_j(x,t,\zeta_0, \zeta)$ is a function of class $\Cm$ with respect to $(x,t)$ and holomorphic with respect to $(\zeta_0,\zeta) \in \C \times \C^d$, then
\begin{equation*} 
    \WF_\M(u) \subset \text{Char} (L_1^u, \dots, L_n^u),
\end{equation*}
where the $L_j^u$s are given by
\begin{equation*} 
    L_j^u = \dfrac{\del}{\del t_j} - \sum_{k=1}^d \dfrac{\del f_j}{\del \zeta_k}\big(x,t,u(x,t), u_x(x,t)\big)
    \dfrac{\del}{\del x_k},
    \eqno{ 1 \leq j \leq n.}
\end{equation*}

\noindent This result was already proved by R.F. Barostichi and G. Petronilho \cite{petronilho:11} for the Gevrey classes, and by Z. Adwan and G. Hoepfner \cite{hoepfner:15} for strongly non-quasianalytic Denjoy-Carleman classes.

This paper is organized as follows: In section \ref{sec:Denjoy-Carleman_classes} we recall the basic definitions and properties of the regular Denjoy-Carleman classes, and we prove an extension theorem adapting Dyn'kin's ideas. In section \ref{sec:nonlinear} we prove a microlocal regularity result for solutions of systems of first order non-linear differential equations. Then in section \ref{sec:Denjoy-Carleman_vectors_maximally_real} we study define the Denjoy-Carleman classes on maximally real submanifolds of $\mathbb{C}^m$ and we prove the extension characterization of such classes, and give also a microlocal version of such characterization.

\section{Denjoy-Carleman classes}\label{sec:Denjoy-Carleman_classes}

In this section we recall the basic definitions and properties of regular Denjoy-Carleman classes as defined by Dyn'kin \cite{dyn'kin}, and we shall also prove the first main result of this paper. Following the ideas in \cite{braundasilva} we prove the existence of approximate solutions for a class of systems of complex vector fields.

\subsection{Definitions and basic properties}

\begin{defn} \label{def:regular_sequence}
A sequence $\M = (M_k)_{k=0}^\infty$ of non-negative numbers is \textit{regular} if the the following conditions are satisfied for $m_k = M_k/k!$, $k \in \Z_+$:
\begin{enumerate}
\item[a)] $m_0 = m_1 = 1$;
\item[b)] $m_k^2 \leq m_{k-1}m_{k+1}, \quad k \geq 1$;
\item[c)] $\sup \big( m_{k+1}/m_k \big)^{1/k} < \infty$;
\item[d)] $\lim \, m_k^{1/k} = \infty$.
\end{enumerate}
For a given sequence $\M$ as above and an open set $U \subset \R^N$, the \textit{regular Denjoy-Carleman class} $\Cm(U)$ is the space of all $\Cl^\infty$-smooth functions $f$ in $U$ such that for every compact set $K \subset U$ there is $C > 0$ such that for every $\alpha \in \Z_+^m$ the following estimate holds
\begin{equation*}
    \sup_{K} \big|\del^\alpha f\big| \leq C^{|\alpha|+1} M_{|\alpha|}.
\end{equation*}
\end{defn}
\noindent We say that the regular sequence $\M$ have \textit{moderate growth} if

\begin{enumerate}
    \item[e)] $\sup \Bigg(
        \dfrac{M_k}{\displaystyle \min_{0 \leq n \leq k} M_n M_{k-n}}
    \Bigg)^{\frac{1}{k+1}} < \infty$.
\end{enumerate}
\begin{rmk}
Given a regular sequence $\M$ we fix a constant
\[
c > \max \Bigg \{ 
    \sup \bigg( \dfrac{m_{k+1}}{m_k} \bigg)^{\frac{1}{k}},
    1
\Bigg \}.
\]
In particular, we have $M_{k} \leq c^{\frac{(n+k-1)(k-n)}{2}} M_n$, $0 \leq n \leq k$.
Thus, if $k-n \leq \kappa \in \Z_+$, then 
\begin{equation} \label{eq:M-k-c-kappa-k-M-n}
    M_k \leq (c^\kappa)^{k}M_n.
\end{equation}
If in addition $\M$ has moderate growth, we choose
\[
c > \max \Bigg \{ 
    \sup \bigg( \dfrac{m_{k+1}}{m_k} \bigg)^{\frac{1}{k}}, \;
    \sup \Bigg( \dfrac{M_k}{\displaystyle \min_{0 \leq n \leq k} M_n M_{k-n}} \Bigg)^{\frac{1}{k+1}},\;
    1
\Bigg \}.
\]
In particular, we have
\begin{equation} \label{eq:M-k-moderate-growth-M-n}
    M_k \leq c^{k+1} M_{k-n} M_n,
\end{equation}
for $0 \leq n \leq k$.
\end{rmk}
\begin{defn}
For each $r>0$ we set
\begin{align*}
    h(r)   &= \inf \big\{ 
        m_k r^k : k \in \Z_+ 
    \big\}, \\
    h_1(r) &= \inf \big\{ 
        m_k r^{k-1} : k \in \Z_+ \setminus \{0\} 
    \big\}, \\
    \N(r)  &= \min \big\{ n \in \Z_+ \setminus \{0\} \,:\, h_1(r) = m_n r^{n-1} \big\}.
\end{align*}
\end{defn}
\begin{rmk}
For $0 < r < 1$ one can readily see that
$h(r) \leq h_1(r) \leq h(cr)$ and for $r \geq 1$ we have $h(r) = h_1(r) = 1$.
Moreover one has
\begin{equation} \label{eq:h_1-r_j}
    \dfrac{h_1(r)}{r^j} \leq c^{\frac{j(j+1)}{2}} h_1(c^j r),
\end{equation}
for all $j \in \Z_+$ and $r > 0$.
The function $\N$ is a decreasing step function such that $\N(r)=0$ for every $r \geq 1$ and $\lim_{r \to 0} \N(r) = \infty$.
\end{rmk}
We recall the main property of function $\N$ in the following lemma (see Lemma 2.13 in \cite{braundasilva}).
\begin{lemma} \label{lem:mkrk}
Let $r > 0$. 
If $n \leq k \leq \N(r)$, then
$m_k r^k \leq m_n r^n$.
\end{lemma}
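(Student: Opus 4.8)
The plan is to read the lemma as a statement about discretely convex sequences and deduce it directly from the log-convexity built into condition (b) of Definition~\ref{def:regular_sequence}. Fix $r>0$ and set $a_k = m_k r^{k-1}$ for $k\geq 1$. Since $\lim m_k^{1/k}=\infty$ we have $a_k\to\infty$, so the infimum $h_1(r)=\inf_{k\geq1}a_k$ is actually attained and $\N(r)$ is well defined as the least index realizing it; I would record this at the outset. The content of (b), rewritten as $m_{k+1}/m_k\geq m_k/m_{k-1}$, is exactly that the ratios $(m_{k+1}/m_k)_{k\geq1}$ form a non-decreasing sequence, and hence so do the ratios $a_{k+1}/a_k = r\,(m_{k+1}/m_k)$.

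The heart of the argument is then the observation that $a_1\geq a_2\geq\cdots\geq a_{\N(r)}$. If this failed there would be a least $j$ with $1\leq j\leq \N(r)-1$ and $a_{j+1}>a_j$; then $a_{j+1}/a_j>1$, and since the ratios are non-decreasing, $a_{k+1}/a_k\geq a_{j+1}/a_j>1$ for every $k\geq j$, so $a_j<a_{j+1}<\cdots<a_{\N(r)}$, contradicting $a_{\N(r)}=\min_{k\geq1}a_k\leq a_j$. Thus $(a_k)$ is non-increasing on $\{1,\dots,\N(r)\}$, so $m_k r^{k-1}=a_k\leq a_n=m_n r^{n-1}$ whenever $1\leq n\leq k\leq \N(r)$, and multiplying by $r>0$ gives $m_k r^k\leq m_n r^n$.

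The only remaining point, and the one needing a little care rather than any real idea, is the boundary index $n=0$. From (a) and (b) the sequence $(m_k)$ is non-decreasing with $m_k\geq m_0=1$, so for $r\geq 1$ one has $m_k r^{k-1}\geq m_k\geq 1=m_1 r^0$ for all $k\geq1$, whence $h_1(r)=1$ and $\N(r)$ degenerates, making the claim trivial; while for $r<1$ one has $m_0 r^{-1}=1/r>1=m_1 r^0$, so putting $a_0=m_0 r^{-1}$ the monotone chain above extends to the index $0$ and yields $m_k r^k\leq m_0=1$ for $0\leq k\leq\N(r)$. I do not anticipate any genuine obstacle here: the proof uses nothing beyond log-convexity, $m_0=m_1=1$, and $m_k^{1/k}\to\infty$, so the main thing to be vigilant about is simply that the infimum defining $\N(r)$ is attained and that the $n=0$ endpoint is handled consistently with the conventions on $\N$ from the preceding remark.
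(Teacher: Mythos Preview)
Your argument is correct: the log-convexity condition (b) makes the ratios $a_{k+1}/a_k=r\,m_{k+1}/m_k$ non-decreasing, and your minimal-counterexample contradiction cleanly gives the monotonicity $a_1\geq\cdots\geq a_{\N(r)}$. The treatment of the endpoint $n=0$ via $a_0=m_0r^{-1}$ for $r<1$ is also fine.

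There is nothing to compare against here: the paper does not supply a proof of this lemma, it merely cites Lemma~2.13 of \cite{braundasilva}. Your proof is the natural and standard one, and is precisely the kind of argument one expects behind that citation. One small remark: you correctly note the well-definedness of $\N(r)$ via $m_k^{1/k}\to\infty$, and your caution about the $r\geq1$ case is warranted since the paper's own Remark and Definition are slightly at odds there (the Remark says $\N(r)=0$ while the definition forces $\N(r)\geq1$); in the applications within the paper only $n\geq1$ and small $r$ actually occur, so the issue is harmless.
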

\begin{defn}
Let $U \subset \R^m$ be an open set and let $X = (X_j : 1 \leq j \leq n )$ be an ordered set of commuting $\Cl^\infty$-smooth complex vector fields in $U$.
An \textit{ultra-differentiable vector in} $U$ \textit{with respect to} $\M$ \textit{and} $X$ is a function $f \in \Cl^\infty(U)$ with the following property: for every compact set $K \subset U$ there is $C > 0$ such that for every $\alpha \in \Z_+^n$ the following estimate holds
\begin{equation} \label{eq:def-ultra-vec}
    \sup_{K} \big|X^\alpha f\big| \leq C^{|\alpha|+1} M_{|\alpha|},
\end{equation}
where $X^\alpha = X_1^{\alpha_1} \circ \cdots \circ X_n^{\alpha_n}$ for each $\alpha = (\alpha_1, \dots, \alpha_n) \in \Z_+^n$.
The space of all ultra-differentiable vectors in $U$ with respect to $\M$ and $X$ is denoted by $\Cm(U,X)$.
\end{defn}

\subsection{Extension theorems}

\begin{thm} \label{thm:ext}
Let $U \subset \R^d$ be an open neighbourhood of the origin and 
let $X = (X_j : 1 \leq j \leq m )$ be an ordered set of commuting $\Cl^\infty$-smooth complex vector fields in $U$.
Define a set $\{L_j : 1 \leq j \leq m\}$ of $\Cl^\infty$-smooth complex vector fields in $U \times \R^m$ by
\[
L_j = \frac{\del}{\del v_j} - X_j,
\]
where $v_j$ is the $j^{\mathrm{th}}$-coordinate component of the second factor in $U \times \R^m$.
If $f \in \Cm(U,X)$, then for every $V \Subset U$ there exists $\delta, Q > 0$ and $F \in \Cl (V \times \R^m) \cap \Cl^\infty \big(V \times (\R^m \setminus\{0\})\big)$ such that
\begin{equation*}
    \begin{dcases}
        F(u,0) = f(u), \\
        \big|L_j F(u,v)\big| \leq Q^{k+1} m_k |v|^k,
    \end{dcases}
    \eqno{
    \begin{matrix}
        u \in V,\phantom{_+} &0 < |v| < \delta, \\
        k \in \Z_+, &1 \leq j \leq m.
    \end{matrix}
    }
\end{equation*}
Furthermore, for every $\kappa \in \Z_+$ we can choose $F$ such that for every $\gamma \in \Z_+^m$ with $|\gamma| \leq \kappa$ the function $v \mapsto X^\gamma F(u,v)$ is $\Cl^\infty$-smooth for every $u \in V$.
If $\M$ has moderate growth then $v \mapsto X^\gamma F(u,v)$ is $\Cl^\infty$-smooth for every $u \in V$ and every $\gamma \in \Z_+^m$.
\end{thm}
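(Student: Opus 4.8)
The plan is to adapt Dyn'kin's extension scheme~\cite{dyn'kin}, in the mollified‑truncation form of~\cite{braundasilva}, carried out simultaneously in all the $v$‑variables. Fix $V \Subset V' \Subset U$ and $C > 1$ with $\sup_{\overline{V'}}\big|X^\alpha f\big| \le C^{|\alpha|+1}M_{|\alpha|}$ for all $\alpha \in \Z_+^m$. Since the $X_j$ commute, the formal series $F^\sharp(u,v) = \sum_\alpha \frac{X^\alpha f(u)}{\alpha!}v^\alpha$ solves $L_jF^\sharp = 0$, $F^\sharp(u,0) = f(u)$, because reindexing gives $\del_{v_j}F^\sharp = \sum_\alpha \frac{X^{\alpha+e_j}f(u)}{\alpha!}v^\alpha = X_jF^\sharp$. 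To obtain a genuine function, fix $\varepsilon \in (0,1)$, a radial $\Psi \in \Ci_c(\{z \in \C^m : |z| < \varepsilon\})$ with $\Psi \ge 0$ and $\int_{\C^m}\Psi\,\d\lambda = 1$ (Lebesgue measure on $\C^m \cong \R^{2m}$), and a large constant $Q$ depending on $C$, $c$, $m$, $\varepsilon$ and—crucially—on $\kappa$; for $v \in \R^m \setminus \{0\}$ put
\[
F(u,v) = \frac{1}{|v|^{2m}}\int_{\C^m}\Psi\!\left(\frac{z-v}{|v|}\right)\sum_{|\alpha| \le \N(Q|z|)}\frac{X^\alpha f(u)}{\alpha!}z^\alpha\,\d\lambda(z).
\]
Because $\Psi$ is radial and holomorphic functions are harmonic on $\R^{2m}$, the mean‑value property gives $\frac{1}{|v|^{2m}}\int_{\C^m}\Psi(\frac{z-v}{|v|})\sum_{|\alpha|\le n}\frac{X^\alpha f(u)}{\alpha!}z^\alpha\,\d\lambda(z) = \sum_{|\alpha|\le n}\frac{X^\alpha f(u)}{\alpha!}v^\alpha$ whenever $n \le \N(Q(1+\varepsilon)|v|)$, i.e. whenever the support of the mollifier is contained in the $z$‑ball cut out by the truncation $|\alpha| \le \N(Q|z|)$. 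Convergence of the integrand and $F(u,v) \to f(u)$ as $v \to 0$, uniformly on $V$, follow from Lemma~\ref{lem:mkrk} together with $\sum_{|\alpha|=k}\frac1{\alpha!} = \frac{m^k}{k!}$; smoothness of $F$ off $v = 0$ is immediate by differentiation under the integral sign (the kernel is $\Ci$ in $(u,v)$ there, even though the truncated sum jumps in $z$), so $F \in \Cl(V\times\R^m) \cap \Ci(V\times(\R^m\setminus\{0\}))$ with $F(u,0) = f(u)$.

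For the decay of $L_jF$, fix $v$ with $0 < |v| < \delta$ and set $n = \N(\lambda|v|)-1$ with $\lambda = Q(1+\varepsilon)$. Split $F = F_{\le n} + F_{>n}$ according to $|\alpha| \le n$ or $n < |\alpha| \le \N(Q|z|)$. By the mean‑value identity $F_{\le n}(u,v) = \sum_{|\alpha|\le n}\frac{X^\alpha f(u)}{\alpha!}v^\alpha$, so a telescoping computation gives $L_jF_{\le n}(u,v) = -\sum_{|\alpha|=n}\frac{X^{\alpha+e_j}f(u)}{\alpha!}v^\alpha$, which is $\le C^{n+2}\frac{M_{n+1}}{n!}(m|v|)^n \lesssim h_1(\lambda|v|)$ after using $M_{n+1}/n! \le c^{n+1}m_n$ and the definition of $\N$. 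In $L_jF_{>n}$ the vector field $X_j$ merely shifts $\alpha \mapsto \alpha+e_j$ inside the truncated sum, while $\del_{v_j}$ acts only on the kernel, with $\bigl|\del_{v_j}\bigl(|v|^{-2m}\Psi((z-v)/|v|)\bigr)\bigr| \le C_1|v|^{-2m-1}$; estimating $|z^\alpha| \le |z|^{|\alpha|} \le ((1+\varepsilon)|v|)^{|\alpha|}$ on the support and summing the tail $\sum_{k>n}(\,\cdot\,)^k m_k$ as a geometric series via Lemma~\ref{lem:mkrk}—this is exactly where $Q$ must be a fixed multiple of $Cmc^{\kappa+1}(1+\varepsilon)$, to make the common ratio $<1$—bounds both resulting integral terms by a constant times $h_1(\lambda|v|)$. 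Hence $\big|L_jF(u,v)\big| \le C'h_1(\lambda|v|) \le C'm_k(\lambda|v|)^{k-1}$ for every $k \ge 1$, and after one further enlargement of $Q$ we get $\big|L_jF(u,v)\big| \le Q^{k+1}m_k|v|^k$ for all $k \in \Z_+$, $0 < |v| < \delta$, $1 \le j \le m$, which is the main assertion.

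For the regularity statement, fix $\gamma$ with $|\gamma| \le \kappa$ and $\beta \in \Z_+^m$. Since the $v$‑independent term of $\del_v^\beta X^\gamma F^\sharp$ is $X^{\gamma+\beta}f(u)$, for small $|v|$ the mean‑value identity yields
\[
\del_v^\beta X^\gamma F(u,v) - X^{\gamma+\beta}f(u) = \del_v^\beta X^\gamma\!\left\{\frac{1}{|v|^{2m}}\int_{\C^m}\Psi\!\left(\frac{z-v}{|v|}\right)\sum_{|\beta|<|\alpha|\le\N(Q|z|)}\frac{X^\alpha f(u)}{\alpha!}z^\alpha\,\d\lambda(z)\right\}.
\]
Estimating the right‑hand side exactly as above, but now invoking \eqref{eq:M-k-c-kappa-k-M-n} in the form $M_{|\gamma|+|\alpha|} \le (c^\kappa)^{|\gamma|+|\alpha|}M_{|\alpha|}$—legitimate precisely because $|\gamma| \le \kappa$, and the reason $Q$ was loaded with the factor $c^\kappa$—gives $\big|\del_v^\beta X^\gamma F(u,v) - X^{\gamma+\beta}f(u)\big| \le C_{\beta,\gamma}|v|$ for $0 < |v| < \delta$. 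Thus every $v$‑partial derivative of $X^\gamma F(u,\cdot)$ extends continuously across $v=0$, so $v \mapsto X^\gamma F(u,v)$ is $\Ci$ on $\R^m$ for each $u \in V$. If $\M$ has moderate growth, replace \eqref{eq:M-k-c-kappa-k-M-n} by \eqref{eq:M-k-moderate-growth-M-n}, i.e. $M_{|\gamma|+|\alpha|} \le c^{|\gamma|+|\alpha|+1}M_{|\gamma|}M_{|\alpha|}$, which holds with no restriction on $\gamma$; the same estimate then works for every $\gamma \in \Z_+^m$, the extra fixed factor $(Cc)^{|\gamma|+1}M_{|\gamma|}$ being harmless.

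The step I expect to be the main obstacle is the constant bookkeeping in the second and third paragraphs: verifying that the single radial mollifier on $\C^m$ reproduces $v^\alpha$ exactly through the harmonic mean‑value property, and that the interplay of the truncation level $\N(Q|z|)$, the choice $n = \N(\lambda|v|)-1$, the multinomial factors $m^k/k!$, and the geometric summation of the tails (Lemma~\ref{lem:mkrk}, and \eqref{eq:h_1-r_j}) still leaves the room needed to absorb the $\kappa$ extra derivatives in $u$—that is, the factor $c^\kappa$, or $M_{|\gamma|}$ under moderate growth. Everything else is a routine multi‑index transcription of the one‑dimensional construction in~\cite{braundasilva}.
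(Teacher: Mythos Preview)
Your proposal is correct and follows essentially the same construction as the paper: the same formal series $F^\sharp$, the same mollified-truncated integral on $\C^m$ with a radial $\Psi$, the same three-term splitting of $L_jF$ at level $n=\N(\lambda|v|)-1$, and the same regularity argument via $\del_v^\beta X^\gamma F - X^{\gamma+\beta}f$ together with \eqref{eq:M-k-c-kappa-k-M-n} (for $|\gamma|\le\kappa$) or \eqref{eq:M-k-moderate-growth-M-n} (under moderate growth). The one cosmetic difference is that you justify the polynomial-reproduction identity $\frac{1}{|v|^{2m}}\int\Psi(\frac{z-v}{|v|})z^\alpha\,\d\lambda=v^\alpha$ via harmonicity of holomorphic functions and the mean-value property, whereas the paper argues directly from the radial symmetry of $\Psi$ that $\int\Psi(w)w^\alpha\,\d\lambda=0$ for $\alpha\neq0$; both arguments are valid and yield the same identity.
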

\begin{proof}
\comment{
The power series % colocar o espaço de séries formais em questão
\begin{equation*}
    F^\sharp(u,v) = 
    \sum_{\alpha \in \Z_+^m}
    f_\alpha(u) v^\alpha = 
    \sum_{\alpha \in \Z_+^m}
    \frac{\big(iX\big)^\alpha f(u)}{\alpha!}v^\alpha
\end{equation*}
is the formal solution for initial value the problem
\begin{equation*}
    \begin{dcases}
        F^\sharp(u,0) = f(u),\\
        L_j F^\sharp = 0,
    \end{dcases}
    \eqno{1 \leq j \leq m.}
\end{equation*}
% calculation in the \comment below
\comment{
We claim that this is the formal solution to the previous problem. Indeed, $F^\sharp(u,0)=f(u)$, and
\begin{align*}
    L_jF^\sharp(u,v)&=\sum_{\alpha}\bigg(\frac{\del}{\del v_j}-i\mathrm{M_j}\bigg)\frac{\big( i\mathrm{M}\big)^\alpha f(u)}{\alpha!}v^\alpha\\
    &=\sum_\alpha \bigg(\frac{\big(i\mathrm{M}\big)^\alpha f(u)}{(\alpha-e_j)!}v^{\alpha-e_j}-\frac{\big(iM\big)^{\alpha+e_j}f(u)}{\alpha!}v^\alpha\bigg)\\
    &=0.
\end{align*}

Note that we used that the vectors fields $\mathrm{M}_j$ and $\del/\del v_k$ are pairwise commuting.
}
Let $0 < \varepsilon < 1$ be given and denote by $B_\varepsilon \subset \C^m$ the open ball of radius $\varepsilon$ centered at the origin. 
Let $\psi \in \Cl^\infty_c(B_\varepsilon)$ be a real-valued cutoff function such that $\psi \geq 0$, $\psi(z) = \psi(|z|)$ for all $z$, and
\[
    \int_{\C^m} \psi \; \d \lambda = 1,
\]
where $\d \lambda$ is the Lebesgue measure in $\C^m$. In view of the symmetry of $\psi$, we have:
\begin{equation} \label{eq:polynomial}
    \dfrac{1}{|v|^{2m}}
    \int_{\C^m} \psi \bigg( 
        \dfrac{z - v}{|v|} 
    \bigg) P(z) \, \d \lambda = P(v),
\end{equation}
for every polynomial $P(z)$. 

Indeed we have
\begin{align*}
    \dfrac{1}{|v|^{2m}} \int_{\C^m} \psi \bigg(
        \dfrac{z - v}{|v|} 
    \bigg) P(z) \, \d \lambda(z) 
    &= \int_{\C^m} \psi(w)P(v+|v|w) \, \d \lambda(w) \\
    &= \int_{\C^m} \psi(w)\big(P(t) + Q(t,|t|w))\big) \, \d \lambda(w) \\
    &= P(t) + \int_{\C^m} \psi(w)Q(t,|t|w) \, \d \lambda(w),
\end{align*}
where $Q(\eta,\xi)$ is a polynomial, with $Q(\eta,0) = 0$. 
If $\alpha \in \Z_+^m$ and $\alpha_j > 0$, then
\begin{align*}
    \int \psi(w)w^\alpha \, \d \lambda 
    &= \int \psi(w_1, \dots, e^{-i\pi/\alpha_j} w_j, \dots, w_m) w^\alpha \, \d \lambda \\
    &=\int \psi(w) w_1^{\alpha_1} \cdots \big(e^{i\pi/\alpha_j} w_j \big)^{\alpha_j} \cdots w_m^{\alpha_m} \, \d \lambda \\
    &=\int \psi(w)e^{i\pi}w^\alpha \, \d \lambda\\
    &=-\int \psi(w)w^\alpha \, \d \lambda.
\end{align*}
Therefore 
\[
\int_{\C^m} \psi(w)Q(t,|t|w) \, \d \lambda = 0.
\]
Fix $V \Subset U$ an open neighborhood of the origin.
\color{red}
There is $C > 0$ such that 
\begin{equation}\label{ineq:derivuk}
\sup_{u \in \overline{V}} 
\big| \del^\beta f_\alpha (u) \big|
\leq \dfrac{C^{1+|\alpha|+|\beta|} M_{|\alpha|+|\beta|}}{\alpha!},
\end{equation}
\color{black}
Set 
$\delta =  \big((1+\epsilon)^2C^2c^\kappa \big)^{-1}$. 
For $u \in V$ and $0 < |v| < \delta$ set
\begin{equation} \label{eq:F}
    F(u,v) = \dfrac{1}{|v|^{2m}} \int_{\C^m}
    \psi\bigg(\dfrac{z-v}{|v|}\bigg)
    \sum_{|\alpha| = 0}^{\N((1+\epsilon)emC^2c^\kappa|z|)}
    f_{\alpha}(u) z^\alpha \; \d \lambda.
\end{equation}
Since the integral occurs only in 
$(1-\varepsilon)|v| < |z| < (1+\varepsilon)|v|$, 
equation \eqref{eq:F} defines a $\Cl^\infty$-smooth function in $V \times \big( \C^m \setminus{\{0\}} \big)$.
%\comment{
\color{red}
\begin{align*}
    L_j \sum_{|\alpha|\leq n} f_\alpha(u) v^\alpha 
    &= 
    \sum_{|\alpha| \leq n} 
    \bigg( \dfrac{\del}{\del v_j} - X_j \bigg) f_\alpha(u)v^\alpha \\
    &= 
    \sum_{|\alpha| \leq n} 
    \dfrac{X^\alpha f(u)}{\alpha!} 
    \dfrac{\del v^\alpha}{\del v_j} - 
    v^\alpha X_j \dfrac{X^\alpha f(u)}{\alpha!}
\end{align*}
\color{purple}
\[
    \dfrac{\del v^\alpha}{\del v_j} = 
    \begin{dcases}
        0, &\textrm{ if } \alpha_j = 0, \\
        \alpha_j v^{\alpha - e_j}, &\textrm{ otherwise.}
    \end{dcases}
\]
\color{red}
\begin{align*}
    \sum_{|\alpha| \leq n} 
    \dfrac{X^\alpha f(u)}{\alpha!} \dfrac{\del v^\alpha}{\del v_j} - v^\alpha X_j \dfrac{X^\alpha f(u)}{\alpha!}
    &=
    \sum_{
    \begin{smallmatrix}
        |\alpha| \leq n \\
        \alpha_j \neq 0
    \end{smallmatrix}
    } 
    \dfrac{X^\alpha f(u)}{\alpha!} \alpha_j v^{\alpha - e_j} - 
    \sum_{|\alpha| \leq n}
    v^\alpha X_j \dfrac{X^\alpha f(u)}{\alpha!} \\
    &=
    \sum_{
    \begin{smallmatrix}
        |\alpha| \leq n \\
        \alpha_j \neq 0
    \end{smallmatrix}
    } 
    \dfrac{X^\alpha f(u)}{(\alpha - e_j)!} v^{\alpha - e_j} - 
    \sum_{|\alpha| \leq n}
    v^\alpha \dfrac{X^{\alpha+e_j} f(u)}{\alpha!} \\
    &=
    - \sum_{|\alpha| = n}
    v^\alpha \dfrac{X^{\alpha+e_j} f(u)}{\alpha!}
\end{align*}
\begin{align*}
L_j F(u,v) &= L_j\left[
    \sum_{|\alpha|\leq n} f_\alpha(u)v^\alpha 
    +
    \dfrac{1}{|v|^{2m}} \int_{\C^m}
    \psi \left( \dfrac{z-v}{|v|} \right)
    \sum_{|\alpha|=n+1}^{\N((1+\epsilon)emC^2c^\kappa|z|)}
    f_{\alpha}(u) z^\alpha \d \lambda
\right] \\
&= 
- \sum_{|\alpha|=n} \frac{(iX)^{\alpha+e_j}f(u)}{\alpha!}v^\alpha + 
\int_{\C^m} L_j \bigg[
    \dfrac{1}{|v|^{2m}}\psi\bigg( \dfrac{z-v}{|v|} \bigg)
\bigg] 
\sum_{|\alpha|=n+1}^{\N((1+\epsilon)emC^2c^\kappa|z|)}
f_\alpha(u) z^\alpha \d \lambda\\
&\quad\, + 
\dfrac{1}{|v|^{2m}} \int_{\C^m}
\psi\bigg(\dfrac{z-v}{|v|}\bigg)
\sum_{|\alpha|=n+1}^{\N((1+\epsilon)emC^2c^\kappa|z|)}
L_jf_{\alpha}(u) z^\alpha \d \lambda \\
&=
-\sum_{|\alpha|=n}\frac{\big(iX\big)^{\alpha+e_j}f(u)}{\alpha!} v^\alpha + 
\int_{\C^m} \frac{\del}{\del v_j} \bigg[
    \frac{1}{|v|^{2m}}\psi\bigg(\dfrac{z-v}{|v|}\bigg) 
\bigg]
\sum_{|\alpha|=n+1}^{\N((1+\epsilon)emC^2c^\kappa|z|)}
f_\alpha(u) z^\alpha \d \lambda\\
&\quad\, + 
\dfrac{1}{|v|^{2m}} \int_{\C^m}
\psi\bigg(\dfrac{z-v}{|v|}\bigg)
\sum_{|\alpha|=n+1}^{\N((1+\epsilon)emC^2c^\kappa|z|)}
(-iX_j)f_{\alpha}(u) z^\alpha \d \lambda.
\end{align*}
\color{black}
%}
For $0 < \rho < |v| < \delta$, we fix 
$n = \N \big((1+\epsilon)^2emC^2c^\kappa \rho \big) - 1$ and evaluate
\begin{align*}
L_j F(u,v) &= 
-\sum_{|\alpha|=n} \frac{\big(iX\big)^{\alpha+e_j}f(u)}{\alpha!} v^\alpha + 
\int_{\C^m} \frac{\del}{\del v_j} \bigg[
    \frac{1}{|v|^{2m}}\psi\bigg(\dfrac{z-v}{|v|}\bigg) 
\bigg]
\sum_{|\alpha|=n+1}^{\N((1+\epsilon)emC^2c^\kappa|z|)}
f_\alpha(u) z^\alpha \d \lambda\\
&\quad\, + 
\dfrac{1}{|v|^{2m}} \int_{\C^m}
\psi\bigg(\dfrac{z-v}{|v|}\bigg)
\sum_{|\alpha|=n+1}^{\N((1+\epsilon)emC^2c^\kappa|z|)}
(-iX_j)f_{\alpha}(u) z^\alpha \d \lambda,
\end{align*}
for $n+1 \leq \N((1+\epsilon)emC^2c^\kappa|z|)$ in the set $(1-\varepsilon)|v| < |z| < (1+\varepsilon)|v|$.

%\comment{
\color{red}
Let $0 < \rho < \delta/3$.
For $\rho < |v| < 2\rho$ we have 
\[
\begin{bmatrix}
(t_1 + \dots + t_m)^n = \sum_{|\alpha| = n} \dfrac{n!}{\alpha!} t^\alpha \quad \Rightarrow \quad \alpha! \leq |\alpha|! \leq m^{|\alpha|} \alpha!
\\
\quad
\\
\# \{\alpha \in \Z_+^m : |\alpha| = n \} = {m+n-1 \choose m-1} \leq (n+1)^{m-1}
\end{bmatrix}
\]
\begin{align*}
    \left|
        \sum_{|\alpha|=n}
        \frac{(iX)^{\alpha+e_j}f(u)}{\alpha!} v^\alpha
    \right| 
    &\leq
    \sum_{|\alpha|=n}
    \frac{|X^{\alpha+e_j}f(u)|}{\alpha!} |v|^n \\
    &\leq
    \sum_{|\alpha|=n} 
    \dfrac{C^{n+2}M_{n+1}}{\alpha!} |v|^n \\
    &\leq
    \sum_{|\alpha|=n} 
    \dfrac{m^n C^{n+2} M_{n+1}}{n!} |v|^n \\
    &= 
    {m+n-1 \choose m-1} \dfrac{m^n C^{n+2} M_{n+1}}{n!} |v|^n \\
    &\leq
    (n+1)^{m-1} \dfrac{m^n C^{n+2} M_{n+1}}{n!} |v|^n \\
    &\leq 
    (m-1)! e^{n+1} \dfrac{m^n C^{n+2} M_{n+1}}{n!}|v|^n \\
    &\leq 
    (m-1)! e^{n+1} \dfrac{m^n C^{n+2} c^{n} M_{n}}{n!}|v|^n\\
    &\leq
    (m-1)! e^{n+1} \dfrac{m^n C^{n+2} c^{n} M_{n}}{n!}(2 \rho)^n \\
    &\leq
    \dfrac{(m-1)!}{2mC^{n}c^{(\kappa-1)n+\kappa}(1+\epsilon)^{2(n+1)}\rho }\left[\dfrac{M_{n+1}(2emC^2c^\kappa(1+\epsilon)^2\rho)^{n+1}}{(n+1)!}\right]\\
    &\leq 
    C_5h_1(2em(1+\epsilon)^2C^2c^\kappa \rho)\\
    &\leq
    C_5h_1(2em(1+\epsilon)^2C^2c^\kappa |v|)
\end{align*}
\begin{align*}
    \left|
        \sum_{|\alpha|=n}
        \frac{(iX)^{\alpha+e_j}f(u)}{\alpha!} v^\alpha
    \right| 
    &\leq
    \sum_{|\alpha|=n}
    \frac{|X^{\alpha+e_j}f(u)|}{\alpha!} |v|^n \\
    &\leq 
    m^n(m-1)!e^{n+1}C^{n+2}\dfrac{M_{n+1}}{n!}|v|^n \\
    &\leq 
    m^n(m-1)!e^{n+1}C^{n+2}c^{n}\dfrac{M_{n}}{n!}|v|^n\\
    &\leq
    (2m)^n(m-1)!e^{n+1}C^{n+2}c^{n}\dfrac{M_{n}}{n!}|\rho|^n \\
    &\leq
    \dfrac{(m-1)!}{2mC^{n}c^{(\kappa-1)n+\kappa}(1+\epsilon)^{2(n+1)}\rho }\left[\dfrac{M_{n+1}(2emC^2c^\kappa(1+\epsilon)^2\rho)^{n+1}}{(n+1)!}\right]\\
    &\leq 
    C_5h_1(2em(1+\epsilon)^2C^2c^\kappa \rho)\\
    &\leq
    C_5h_1(2em(1+\epsilon)^2C^2c^\kappa |v|)
\end{align*}
\color{black}
%}
\begin{align*}
    \left|
        \sum_{|\alpha|=n}
        \frac{(iX)^{\alpha+e_j}f(u)}{\alpha!} v^\alpha
    \right| 
    &\leq
    \sum_{|\alpha|=n}
    \frac{|X^{\alpha+e_j}f(u)|}{\alpha!} |v|^n \\
    &\leq 
    m^n(m-1)!e^{n+1}C^{n+2}\dfrac{M_{n+1}}{n!}|v|^n \\
    &\leq 
    m^n(m-1)!e^{n+1}C^{n+2}c^{n}\dfrac{M_{n}}{n!}|v|^n\\
    &\leq
    \dfrac{(m-1)!}{mC^{n}c^{(\kappa-1)n+\kappa}(1+\epsilon)^{2(n+1)}|v| }\left[\dfrac{M_{n+1}(emC^2c^\kappa(1+\epsilon)^2|v|)^{n+1}}{(n+1)!}\right]\\
    &\leq 
    C_5h_1((1+\epsilon)^2C^2c^\kappa|v|)
\end{align*}

\begin{align*}
\sum_{|\alpha|=n+1}^{\N((1+\epsilon)emC^2c^\kappa|z|)}|f_\alpha(u)||z|^{|\alpha|} & =  \sum_{k=n+1}^{\N((1+\epsilon)emC^2c^\kappa|z|)}\sum_{|\alpha|=k}\left|\frac{\mathrm{M}^{\alpha}f(u)}{\alpha!}\right||z|^{k}\\
&\leq \sum_{k=n+1}^{\N((1+\epsilon)emC^2c^\kappa|z|)}\sum_{|\alpha|=k}C\dfrac{M_k}{\alpha!}\left((1+\epsilon)C|z|\right)^k \dfrac{1}{(1+\epsilon)^k} \\
& \leq (m-1)!eC\sum_{k=n+1}^{\N((1+\epsilon)emC^2c^\kappa|z|)}\dfrac{M_k}{k!}\left((1+\epsilon)emC|z|\right)^k \dfrac{1}{(1+\epsilon)^k}\\
&= (m-1)!eC\sum_{k=n+1}^{\N((1+\epsilon)emC^2c^\kappa|z|)}\dfrac{M_k}{k!}\left((1+\epsilon)emC^2c^\kappa|z|\right)^k \dfrac{1}{(Cc^\kappa(1+\epsilon))^k}\\
& \leq C_3 M_{n+1} \dfrac{C^{2n+2}c^{\kappa(n+1)}(1+\epsilon)^{2(n+1)}|v|^{n+1}}{(n+1)!} \\
& = C_3 (1+\epsilon)^2|v|C^2c^\kappa h_1\!\left((1+\epsilon)^2C^2c^\kappa|v|\right)\\
&\leq C_4 h_1\!\left((1+\epsilon)^2C^2c^\kappa|v|\right),\\
\end{align*}

\begin{align*}
\sum_{|\alpha|=n+1}^{\N((1+\epsilon)emC^2c^\kappa|z|)}|\mathrm{M}_jf_\alpha(u)||z|^{|\alpha|} & = \sum_{k=n+1}^{\N((1+\epsilon)emC^2c^\kappa|z|)}\sum_{|\alpha|=k}\left|\frac{\mathrm{M}^{\alpha+e_j}f(u)}{\alpha!}\right||z|^{k} \\
&\leq \sum_{k=n+1}^{\N((1+\epsilon)emC^2c^\kappa|z|)}\sum_{|\alpha|=k}C^2\dfrac{M_{k+1}}{\alpha!}\left((1+\epsilon)C|z|\right)^k \dfrac{1}{(1+\epsilon)^k}\\
&\leq \sum_{k=n+1}^{\N((1+\epsilon)emC^2c^\kappa|z|)}(k+1)^{m-1}C^2\dfrac{M_k}{k!}\left(cm(1+\epsilon)C|z|\right)^k \dfrac{1}{((1+\epsilon))^k}\\
&\leq (m-1)!\sum_{k=n+1}^{\N((1+\epsilon)emC^2c^\kappa|z|)}e^{k+1}C^2\dfrac{M_k}{k!}\left(cm(1+\epsilon)C|z|\right)^k \dfrac{1}{((1+\epsilon))^k}\\
&=(m-1)!eC^2\sum_{k=n+1}^{\N((1+\epsilon)emC^2c^\kappa|z|)}\dfrac{M_k}{k!}\left(ecm(1+\epsilon)C|z|\right)^k \dfrac{1}{((1+\epsilon))^k}\\
&\leq (m-1)!eC^2\sum_{k=n+1}^{\N((1+\epsilon)emC^2c^\kappa|z|)}\dfrac{M_k}{k!}\left((1+\epsilon)emC^2c^\kappa|z|\right)^k \dfrac{1}{(Cc^{\kappa-1}(1+\epsilon))^k}\\
& \leq C_1 M_{n+1} \dfrac{(1+\epsilon)^{2(n+1)}(emC^2c^\kappa)^{n+1}|v|^{n+1}}{(n+1)!} \\
& = C_1 (1+\epsilon)^2emC^2c^\kappa|v| h_1\!\left((1+\epsilon)^2emC^2c^\kappa|v|\right)\\
&\leq C_2h_1\!\left((1+\epsilon)^2emC^2c^\kappa|v|\right)
\end{align*}

\begin{align*}
     \del_v^\beta \mathrm{M}^\gamma &F(u,v)-\beta!\mathrm{M}^\gamma f_\beta (u)=\\
     &=\del_v^\beta \left\{\mathrm{M}^\gamma F(u,v) - \sum_{|\alpha| \leq |\beta|} \mathrm{M}^\gamma f_\alpha(u)v^\alpha \right\}\\
     &=\del_v^\beta \left\{ 
     \dfrac{1}{|v|^{2m}}
     \int_{\C^m} \psi\left(\dfrac{z-v}{|v|}\right)
     \sum_{|\alpha|=|\beta|+1}^{\N((1+\epsilon)emC^2c^\kappa|z|)}\mathrm{M}^\gamma f_{\alpha}(u)z^\alpha \, \d \lambda \right\}\\
     &=
     \int_{\C^m}\del_{v}^{\beta}\left\{\frac{1}{|v|^{2m}} \psi\left(\frac{z-v}{|v|}\right)\right\}
     \sum_{|\alpha|=|\beta|+1}^{\N((1+\epsilon)emC^2c^\kappa|z|)}\mathrm{M}^\gamma f_{\alpha}(u)z^\alpha \, \d \lambda \\
     &=
     \int_{\C^m} g_{\beta}(z,v) \sum_{|\alpha|=|\beta|+1}^{\N((1+\epsilon)emC^2c^\kappa|z|)}\mathrm{M}^\gamma f_{\alpha}(u)z^\alpha \, \d \lambda \\
     &=
     \int_{\C^m} 
     |v|^{2m}g_\beta(|v|w+v,v)
     \sum_{|\alpha|=|\beta|+1}^{\N((1+\epsilon)emC^2c^\kappa|\zeta(w)|)}\mathrm{M}^\gamma f_{\alpha}(u)
     (|v|w + w)^\alpha
     \, \d \lambda,
\end{align*}
where 
\begin{equation*}
    g_\beta(z,v) \doteq \del_{v}^{\beta}\left\{\frac{1}{|v|^{2m}} \psi \left(\frac{z-v}{|v|}\right)\right\}
\end{equation*}
Thus $|v|^{2m}|g_\beta(|v|w+v,v)| \leq C_{10}|v|^{-|\beta|}$, where $C_{10}>0$ depend on $\beta$.
Applying Proposition \ref{prop:derivuk}, estimate \eqref{eq:eq-lemma}, and Remark \ref{rmk:Mregular} (3.) we obtain

\begin{align}\label{eq:estimate-Ci-t-u}
    \big|\del_v^\beta \mathrm{M}^\gamma &F(u,v)-\beta!\mathrm{M}^\gamma f_\beta (u)\big|
    \leq\\ \nonumber
    &\leq\frac{C_{10}}{2|v|^{|\beta|}}
    \int_{\Delta_\epsilon(0)}
    \sum_{|\alpha|=|\beta|+1}^{\N((1+\epsilon)C^2c^\kappa|\zeta(w)|)}\left|\mathrm{M}^\gamma f_\alpha(u)\right||\zeta(w)^\alpha| \d \lambda \\ \nonumber
    &\leq \frac{C_{10}}{2|v|^{|\beta|}}\int_{|w|\leq\epsilon}\sum_{|\alpha|=|\beta|+1}^{\N((1+\epsilon)C^2c^\kappa||t|w+t|)}C^{1+|\alpha|+|\beta|+k}\frac{M_{|\alpha|+k}\beta!}{k!}||v|w+v|^{|\alpha|}\d \lambda\\ \nonumber
    &\leq \frac{C_{11}}{|v|^{|\beta|}}\int_{|w|\leq\epsilon}\sum_{k=m+1}^{\N((1+\epsilon)C^2c^\kappa||t|w+t|)}C^{k}c^{k+|\alpha|-1}\cdots c^{k}\frac{M_{k}}{k!}||v|w+v|^{|\alpha|}\d \lambda\\ \nonumber
    &\leq \frac{C_{11}c^{(\kappa-1)\kappa/2}}{|v|^{|\beta|}}\int_{|w|\leq\epsilon}\sum_{|\alpha|=|\beta|+1}^{\N((1+\epsilon)C^2c^\kappa||v|w+v|)}(C^2c^\kappa(1+\epsilon))^{|\alpha|}\frac{M_{|\alpha|}}{\alpha!}||v|w+v|^{|\alpha|} \cdot \\ \nonumber
    &\cdot \frac{1}{(C(1+\epsilon))^k}|\d w\wedge\d\overline{w}|\\ \nonumber
    &\leq \frac{C_{12}}{|v|^{|\beta|}}\int_{|w|\leq\epsilon}(C^2c^\kappa(1+\epsilon))^{|\beta|+1}\frac{M_{|\beta|+1}}{(\beta+1)!}||v|w+v|^{|\beta|+1}\d \lambda\cdot \\ \nonumber
    &\cdot \sum_{|\alpha|\geq |\beta|+1}^\infty\frac{1}{(C(1+\epsilon))^{|\alpha|}} \\ \nonumber
    &\leq \frac{C_{13}}{|v|^{|\beta|}}\int_{|w|\leq \epsilon}\frac{(1+\epsilon)^{|\beta|+1}C^{2(|\beta|+1)}c^{\kappa(|\beta|+1)}M_{|\beta|+1}}{(\beta+1)!}||v|w+v|^{|\beta|+1}\d \lambda \\
    &\leq C_{14}|v|,
\end{align}
\noindent therefore
\begin{equation*}
    \del_t^m\del_x^\alpha\del_\zeta^\beta u(x,0,\zeta) = m! \del_x^\alpha\del_\zeta^\beta u_m(x,\zeta).
\end{equation*}
The reasoning above can be summarized in the following theorem: ????
\begin{align*}
     \del_v^\beta \mathrm{M}^\gamma &F(u,v)-\beta!\mathrm{M}^\gamma f_\beta (u)=\\
     &=\del_v^\beta \left\{\mathrm{M}^\gamma F(u,v) - \sum_{|\alpha| \leq |\beta|} \mathrm{M}^\gamma f_\alpha(u)v^\alpha - \sum_{\begin{smallmatrix}|\beta| < |\alpha | \leq n \\
     \alpha \neq \beta
     \end{smallmatrix}} \mathrm{M}^\gamma f_\alpha (u)v^\alpha \right\}\\
     &=\del_v^\beta \left\{\dfrac{i^m}{2^mv_1^2\cdots v_m^2} \int_{\C^m} \psi\left(\dfrac{z_1-v_1}{|v_1|}\right)\cdots\psi\left(\dfrac{z_m-v_m}{|v_m|}\right)\sum_{|\alpha|=|\beta|+1}^{\N((1+\epsilon)emC^2c^\kappa|z|)}\mathrm{M}^\gamma f_{\alpha}(u)z^\alpha \d z_1 \wedge \d \bar{z}_1\wedge\cdots\wedge\d z_m\wedge\d\bar{z}_m\right\}\\
     &=\frac{i^m}{2^m}\int_{\C^m}\del_{v_1}^{\beta_1}\left\{\frac{1}{v_1^2} \psi\left(\frac{z_1-v_1}{|v_1|}\right)\right\}\cdots\del_{v_m}^{\beta_m}\left\{\frac{1}{v_m^2} \psi\left(\frac{z_m-v_m}{|v_m|}\right)\right\}\sum_{|\alpha|=|\beta|+1}^{\N((1+\epsilon)emC^2c^\kappa|z|)}\mathrm{M}^\gamma f_{\alpha}(u)z^\alpha \d z_1 \wedge \d \bar{z}_1\wedge\cdots\wedge\d z_m\wedge\d\bar{z}_m\\
     &=\frac{i^m}{2^m}\int_{\C^m} g_1(z_1,v_1) \cdots g_m(z_m,v_m) \sum_{|\alpha|=|\beta|+1}^{\N((1+\epsilon)emC^2c^\kappa|z|)}\mathrm{M}^\gamma f_{\alpha}(u)z^\alpha \d z_1 \wedge \d \bar{z}_1\wedge\cdots\wedge\d z_m\wedge\d\bar{z}_m\\
     &=\frac{i^m}{2^m} \int_\C 
     \sum_{|\alpha|=|\beta|+1}^{\N((1+\epsilon)emC^2c^\kappa|\zeta(w)|)}\mathrm{M}^\gamma f_{\alpha}(u)
     \zeta(w)^\alpha
     \prod_{j=1}^m v_j^2 g_j(|v_j|w_j + v_j,v_j)
     \d w_1 \wedge \d \bar{w}_1 \wedge \cdots \wedge \d w_m \wedge \d \bar{w}_m,
\end{align*}
}
Let us assume that $\M$ has moderate growth.
We follow the steps of section 3 in \cite{nachrichten}. 
The power series % colocar o espaço de séries formais em questão
\begin{equation*}
    F^\sharp(u,v) = 
    \sum_{\alpha \in \Z_+^m}
    f_\alpha(u) v^\alpha = 
    \sum_{\alpha \in \Z_+^m}
    \frac{X^\alpha f(u)}{\alpha!} v^\alpha
\end{equation*}
is the formal solution for initial value the problem
\begin{equation*}
    \begin{dcases}
        F^\sharp(u,0) = f(u),\\
        L_j F^\sharp = 0,
    \end{dcases}
    \eqno{1 \leq j \leq m.}
\end{equation*}
Let $0 < \varepsilon < 1$ be given and denote by $B_\varepsilon \subset \C^m$ the open ball of radius $\varepsilon$ centered at the origin. 
Let $\psi \in \Cl^\infty_c(B_\varepsilon)$ be a real-valued cutoff function such that $\psi \geq 0$, $\psi(z) = \psi(|z|)$ for all $z$, and
\[
    \int_{\C^m} \psi(z) \, \d \lambda(z) = 1,
\]
where $\lambda$ is the Lebesgue measure in $\C^m$. In view of the radial symmetry of $\psi$, we have:
\begin{equation} \label{eq:polynomial}
    \dfrac{1}{|v|^{2m}}
    \int_{\C^m} \psi \bigg( 
        \dfrac{z - v}{|v|} 
    \bigg) P(z) \, \d \lambda(z) = P(v),
\end{equation}
for every polynomial $P(z)$.
Fix $V \Subset U$ an open neighborhood of the origin.
There is constant $C > 0$ such that estimate \eqref{eq:def-ultra-vec} with $K = \overline{V}$ holds for every $\alpha \in \Z_+^m$.
Set 
$\delta =  \big(2(1+\varepsilon)^2cemC \big)^{-1}$. 
For $(u,v) \in V \times \big( \C^m \setminus{\{0\}} \big)$ set
\begin{equation} \label{eq:F}
    F(u,v) = \dfrac{1}{|v|^{2m}} \int_{\C^m}
    \psi\bigg(\dfrac{z-v}{|v|}\bigg)
    \sum_{|\alpha| = 0}^{\N((1+\varepsilon)cemC|z|)}
    f_{\alpha}(u) z^\alpha \; \d \lambda(z).
\end{equation}
Since the integral occurs only in 
$\Omega_\epsilon = \{z \in \C^m : (1-\varepsilon)|v| < |z| < (1+\varepsilon)|v|\}$, 
equation \eqref{eq:F} defines a $\Cl^\infty$-smooth function in $V \times \big( \C^m \setminus{\{0\}} \big)$.
For $0 < \rho < |v| < 2\rho < \delta$, we fix 
$n = \N \big(2(1+\varepsilon)^2cemC \rho \big) - 1$ and evaluate
\begin{align} \label{eq:LjF}
L_j F(u,v) &= 
-\sum_{|\alpha|=n} \frac{X^{\alpha+e_j}f(u)}{\alpha!} v^\alpha + 
\int_{\C^m} \frac{\del}{\del v_j} \bigg[
    \frac{1}{|v|^{2m}}\psi\bigg(\dfrac{z-v}{|v|}\bigg) 
\bigg]
\sum_{|\alpha|=n+1}^{\N((1+\varepsilon)cemC|z|)}
f_\alpha(u) z^\alpha \d \lambda(z) \\
&\quad\, + 
\dfrac{1}{|v|^{2m}} \int_{\C^m}
\psi\bigg(\dfrac{z-v}{|v|}\bigg)
\sum_{|\alpha|=n+1}^{\N((1+\varepsilon)cemC|z|)}
(-X_j)f_{\alpha}(u) z^\alpha \d \lambda(z), \nonumber
\end{align}
for $n+1 \leq \N((1+\varepsilon)cemC|z|)$ in $\Omega_\epsilon$. 
Then we estimate the main factor in the third term of equation \eqref{eq:LjF}:
\begin{align}
    \sum_{|\alpha|=n+1}^{\N((1+\varepsilon)cemC|z|)}
        |X_j f_\alpha(u)||z|^{|\alpha|} 
    &\leq 
    \sum_{|\alpha|=n+1}^{\N((1+\varepsilon)cemC|z|)}
    \dfrac{C^{|\alpha|+2} M_{|\alpha|+1}}{\alpha!}|z|^{|\alpha|} \nonumber \\
    &\leq  
    \sum_{k=n+1}^{\N((1+\varepsilon)cemC|z|)}
    {m+k-1 \choose m-1}
    \dfrac{m^k C^{k+2} M_{k+1}}{k!}|z|^{k} \nonumber \\
    &\leq  
    \sum_{k=n+1}^{\N((1+\varepsilon)cemC|z|)}
    (k+1)^m
    \dfrac{m^k C^{k+2} M_{k+1}}{(k+1)!} |z|^{k} \nonumber \\
    &\leq 
    \sum_{k=n+1}^{\N((1+\varepsilon)cemC|z|)}
    m! \, e^{k+1}
    \dfrac{c^{k} m^k C^{k+2} M_k}{k!} |z|^{k} \nonumber \\
    &= 
    m! \, e \,C^2 \sum_{k=n+1}^{\N((1+\varepsilon)cemC|z|)}
    \dfrac{M_k}{k!} \big(
        (1+\varepsilon)cemC|z|
    \big)^k
    \dfrac{1}{(1+\varepsilon)^k} \nonumber \\
    &\leq 
    m! \, e \, C^2 
    \dfrac{M_{n+1}}{(n+1)!}\big((1+\varepsilon)cemC|z|\big)^{n+1}
    \sum_{k=n+1}^{\infty}
    \dfrac{1}{(1+\varepsilon)^k} \label{ineq:using-lemma-nachrichten} \\
    &\leq
    \dfrac{m! \, e \, C^2 (1+\varepsilon)}{\epsilon} \cdot
    \dfrac{M_{n+1}}{(n+1)!}\big(2(1+\varepsilon)^2cemC \rho \big)^n \nonumber \\
    &=
    \dfrac{m! \, e \, C^2 (1+\varepsilon)}{\epsilon} \;
    h_1 \big(2(1+\varepsilon)^2cemC \rho \big) \label{ineq:using-def-n-N} \\
    &\leq
    \dfrac{m! \, e \, C^2 (1+\varepsilon)}{\epsilon} \;
    h_1 \big(2(1+\varepsilon)^2cemC |v| \big) \label{eq:Lj-3}
\end{align}
where in \eqref{ineq:using-lemma-nachrichten} we applied Lemma \ref{lem:mkrk} and \eqref{ineq:using-def-n-N} follows from the definition of function $\N$ and the choice of $n$.
Analogously, we estimate
\begin{equation} \label{eq:Lj-1}
    \left|
        \sum_{|\alpha|=n}
        \frac{X^{\alpha+e_j}f(u)}{\alpha!} v^\alpha
    \right| 
    \leq
    m! \, e \, C^2 \, h_1
    \big(
        2(1+\varepsilon)^2cemC |v|
    \big)
\end{equation}
and
\begin{equation} \label{eq:Lj-2}
    \sum_{|\alpha|=n+1}^{\N((1+\varepsilon)cemC|z|)}
        |f_\alpha(u)||z|^{|\alpha|}
    \leq
    \dfrac{(m-1)! \, e \, C \, (1 + \varepsilon)c}{(1 + \varepsilon)c - 1} \;
    h_1 \big(2(1+\varepsilon)^2cemC |v| \big).
\end{equation}
% passo-a-passo no comment abaixo
\comment{
\color{blue}
\begin{align*}
    \left|
        \sum_{|\alpha|=n}
        \frac{X^{\alpha+e_j}f(u)}{\alpha!} v^\alpha
    \right| 
    &\leq
    \sum_{|\alpha|=n}
    \frac{|X^{\alpha+e_j}f(u)|}{\alpha!} |v|^n \\
    &\leq
    \sum_{|\alpha|=n} 
    \dfrac{C^{n+2}M_{n+1}}{\alpha!} |v|^n \\
    &\leq
    \sum_{|\alpha|=n} 
    \dfrac{m^n C^{n+2} M_{n+1}}{n!} |v|^n \\
    &= 
    {m+n-1 \choose m-1} \dfrac{m^n C^{n+2} M_{n+1}}{n!} |v|^n \\
    &\leq
    (n+1)^{m-1} \dfrac{m^n C^{n+2} M_{n+1}}{n!} |v|^n \\
    &=
    (n+1)^m \dfrac{m^n C^{n+2} M_{n+1}}{(n+1)!} |v|^n \\
    &\leq 
    m! \, e^{n+1} \dfrac{m^n C^{n+2} M_{n+1}}{(n+1)!}|v|^n \\
    &\leq
    m! \, e^{n+1} \dfrac{m^n C^{n+2} M_{n+1}}{(n+1)!}(2 \rho)^n \\
    &=
    m! \, e C^2 \dfrac{M_{n+1}}{(n+1)!}
    \big(
        2(1+\varepsilon)^2cemC\rho
    \big)^n
    \underbrace{\dfrac{1}{(1+\varepsilon)^{2n}c^n}}_{<1}\\
    &\leq
    m! \, e C^2 \dfrac{M_{n+1}}{(n+1)!}
    \big(
        2(1+\varepsilon)^2cemC\rho
    \big)^n \\
    &=
    m! \, e C^2 h_1
    \big(
        2(1+\varepsilon)^2cemC\rho
    \big) \\
    &\leq
    m! \, e C^2 h_1
    \big(
        2(1+\varepsilon)^2cemC |v|
    \big) \\
    &\leq
    m! \, e C^2 h
    \big(
        2(1+\varepsilon)^2c^2emC |v|
    \big) \\
    &\leq
    m! \, e C^2
    \big(
        2(1+\varepsilon)^2c^2emC
    \big)^k m_k |v|^k
\end{align*}
\begin{align*}
    \sum_{|\alpha|=n+1}^{\N((1+\varepsilon)cemC|z|)}
        |f_\alpha(u)||z|^{|\alpha|}
    &=
    \sum_{|\alpha|=n+1}^{\N((1+\varepsilon)cemC|z|)}
        \frac{|(iX)^{\alpha}f(u)|}{\alpha!} |z|^{|\alpha|} \\
    &\leq 
    \sum_{|\alpha|=n+1}^{\N((1+\varepsilon)cemC|z|)}
        \frac{C^{|\alpha|+1} M_{|\alpha|}}{\alpha!} |z|^{|\alpha|} \\
    &= 
    \sum_{k=n+1}^{\N((1+\varepsilon)cemC|z|)}
    \sum_{|\alpha|=k}
        \frac{C^{k+1} M_k}{\alpha!} |z|^{k} \\
    &\leq 
    \sum_{k=n+1}^{\N((1+\varepsilon)cemC|z|)}
    \sum_{|\alpha|=k}
        \frac{m^k C^{k+1} M_k}{k!} |z|^{k} \\
    &= 
    \sum_{k=n+1}^{\N((1+\varepsilon)cemC|z|)}
    {m + k - 1 \choose m-1}
        \frac{m^k C^{k+1} M_k}{k!} |z|^{k} \\
    &\leq 
    \sum_{k=n+1}^{\N((1+\varepsilon)cemC|z|)}
    (k+1)^{m-1}
    \frac{m^k C^{k+1} M_k}{k!} |z|^{k} \\
    &\leq 
    \sum_{k=n+1}^{\N((1+\varepsilon)cemC|z|)}
        (m-1)! \, e^{k+1}
        \frac{m^k C^{k+1} M_k}{k!} |z|^{k} \\
    &= 
    (m-1)! \, e C 
    \sum_{k=n+1}^{\N((1+\varepsilon)cemC|z|)}
    \dfrac{M_k}{k!}\big((1+\varepsilon)cemC|z|\big)^k \dfrac{1}{((1+\varepsilon)c)^k} \\
    &\leq 
    (m-1)! \, e C 
    \dfrac{M_{n+1}}{(n+1)!}\big((1+\varepsilon)cemC|z|\big)^{n+1}
    \sum_{k=n+1}^{\infty}
    \dfrac{1}{((1+\varepsilon)c)^k} \\
    &\leq 
    \bigg(
        (m-1)! \, e C 
        \sum_{k=0}^{\infty}
        \dfrac{1}{((1+\varepsilon)c)^k}
    \bigg)
    \dfrac{M_{n+1}}{(n+1)!}\big((1+\varepsilon)^2cemC|v|\big)^{n+1} \\
    &\leq 
    \bigg(
        (m-1)! \, e C 
        \sum_{k=0}^{\infty}
        \dfrac{1}{((1+\varepsilon)c)^k}
    \bigg)
    \dfrac{M_{n+1}}{(n+1)!}\big(2(1+\varepsilon)^2cemC \rho \big)^{n+1} \\
    &=
    \bigg(
        (m-1)! \, e C 
        \sum_{k=0}^{\infty}
        \dfrac{1}{((1+\varepsilon)c)^k}
    \bigg)
    \big(2(1+\varepsilon)^2cemC \rho \big)
    \dfrac{M_{n+1}}{(n+1)!}\big(2(1+\varepsilon)^2cemC \rho \big)^n \\
    &=
    \bigg(
        (m-1)! \, e C 
        \sum_{k=0}^{\infty}
        \dfrac{1}{((1+\varepsilon)c)^k}
    \bigg)
    \underbrace{\big(2(1+\varepsilon)^2cemC \rho \big)}_{<1}
    h_1 \big(2(1+\varepsilon)^2cemC \rho \big) \\
    &\leq
    \bigg(
        (m-1)! \, e C 
        \sum_{k=0}^{\infty}
        \dfrac{1}{((1+\varepsilon)c)^k}
    \bigg)
    h_1 \big(2(1+\varepsilon)^2cemC \rho \big) \\
    &\leq
    \bigg(
        (m-1)! \, e C 
        \sum_{k=0}^{\infty}
        \dfrac{1}{((1+\varepsilon)c)^k}
    \bigg)
    h_1 \big(2(1+\varepsilon)^2cemC |v| \big) \\
    &\leq
    \bigg(
        (m-1)! \, e C 
        \sum_{k=0}^{\infty}
        \dfrac{1}{((1+\varepsilon)c)^k}
    \bigg)
    h \big(2(1+\varepsilon)^2c^2emC |v| \big) \\
    &\leq
    \bigg(
        (m-1)! \, e C 
        \sum_{k=0}^{\infty}
        \dfrac{1}{((1+\varepsilon)c)^k}
    \bigg)
    \big(2(1+\varepsilon)^2c^2emC \big)^k m_k |v|^k
\end{align*}
\begin{align*}
    \sum_{|\alpha|=n+1}^{\N((1+\varepsilon)cemC|z|)}
        |X_j f_\alpha(u)||z|^{|\alpha|} 
    &= 
    \sum_{|\alpha|=n+1}^{\N((1+\varepsilon)cemC|z|)}
    \frac{|X^{\alpha+e_j}f(u)|}{\alpha!} |z|^{|\alpha|} \\
    &\leq 
    \sum_{|\alpha|=n+1}^{\N((1+\varepsilon)cemC|z|)}
    \dfrac{C^{|\alpha|+2} M_{|\alpha|+1}}{\alpha!}|z|^{|\alpha|} \\
    &= 
    \sum_{k=n+1}^{\N((1+\varepsilon)cemC|z|)}
    \sum_{|\alpha|=k}
    \dfrac{C^{k+2} M_{k+1}}{\alpha!}|z|^{k} \\
    &\leq 
    \sum_{k=n+1}^{\N((1+\varepsilon)cemC|z|)}
    \sum_{|\alpha|=k}
    \dfrac{m^k C^{k+2} M_{k+1}}{k!}|z|^{k} \\
    &= 
    \sum_{k=n+1}^{\N((1+\varepsilon)cemC|z|)}
    {m+k-1 \choose m-1}
    \dfrac{m^k C^{k+2} M_{k+1}}{k!}|z|^{k} \\
    &\leq 
    \sum_{k=n+1}^{\N((1+\varepsilon)cemC|z|)}
    (k+1)^{m-1}
    \dfrac{m^k C^{k+2} M_{k+1}}{k!}|z|^{k} \\
    &= 
    \sum_{k=n+1}^{\N((1+\varepsilon)cemC|z|)}
    (k+1)^m
    \dfrac{m^k C^{k+2} M_{k+1}}{(k+1)!} |z|^{k} \\
    &\leq 
    \sum_{k=n+1}^{\N((1+\varepsilon)cemC|z|)}
    m! \, e^{k+1}
    \dfrac{m^k C^{k+2} M_{k+1}}{(k+1)!} |z|^{k} \\
    &\leq 
    \sum_{k=n+1}^{\N((1+\varepsilon)cemC|z|)}
    m! \, e^{k+1}
    \dfrac{c^{k} m^k C^{k+2} M_k}{k!} |z|^{k} \\
    &= 
    m! \, e C^2 \sum_{k=n+1}^{\N((1+\varepsilon)cemC|z|)}
    \dfrac{M_k}{k!} \big(
        (1+\varepsilon)cemC|z|
    \big)^k
    \dfrac{1}{(1+\varepsilon)^k} \\
    &\leq 
    m! \, e C^2 
    \dfrac{M_{n+1}}{(n+1)!}\big((1+\varepsilon)cemC|z|\big)^{n+1}
    \sum_{k=n+1}^{\infty}
    \dfrac{1}{(1+\varepsilon)^k} \\
    &\leq 
    \bigg(
        m! \, e C^2 
        \sum_{k=0}^{\infty}
        \dfrac{1}{(1+\varepsilon)^k}
    \bigg)
    \dfrac{M_{n+1}}{(n+1)!}\big((1+\varepsilon)^2cemC|v|\big)^{n+1} \\
    &\leq 
    \bigg(
        m! \, e C^2 
        \sum_{k=0}^{\infty}
        \dfrac{1}{(1+\varepsilon)^k}
    \bigg)
    \dfrac{M_{n+1}}{(n+1)!}\big(2(1+\varepsilon)^2cemC \rho \big)^{n+1} \\
    &=
    \bigg(
        m! \, e C^2 
        \sum_{k=0}^{\infty}
        \dfrac{1}{(1+\varepsilon)^k}
    \bigg)
    \big(2(1+\varepsilon)^2cemC \rho \big)
    \dfrac{M_{n+1}}{(n+1)!}\big(2(1+\varepsilon)^2cemC \rho \big)^n \\
    &=
    \bigg(
        m! \, e C^2 
        \sum_{k=0}^{\infty}
        \dfrac{1}{(1+\varepsilon)^k}
    \bigg)
    \underbrace{\big(2(1+\varepsilon)^2cemC \rho \big)}_{<1}
    h_1 \big(2(1+\varepsilon)^2cemC \rho \big) \\
    &\leq
    \bigg(
        m! \, e C^2 
        \sum_{k=0}^{\infty}
        \dfrac{1}{(1+\varepsilon)^k}
    \bigg)
    h_1 \big(2(1+\varepsilon)^2cemC \rho \big) \\
    &\leq
    \bigg(
        m! \, e C^2 
        \sum_{k=0}^{\infty}
        \dfrac{1}{(1+\varepsilon)^k}
    \bigg)
    h_1 \big(2(1+\varepsilon)^2cemC |v| \big) \\
    &\leq
    \bigg(
        m! \, e C^2 
        \sum_{k=0}^{\infty}
        \dfrac{1}{(1+\varepsilon)^k}
    \bigg)
    h \big(2(1+\varepsilon)^2c^2emC |v| \big) \\
    &\leq
    \bigg(
        m! \, e C^2 
        \sum_{k=0}^{\infty}
        \dfrac{1}{(1+\varepsilon)^k}
    \bigg)
    \big(2(1+\varepsilon)^2c^2emC \big)^k m_k |v|^k
\end{align*}
\color{black}
}
Let $g_\beta(z,v) = \del_v^\beta \big\{ \psi((z-v)/|v|)/|v|^{2m} \big\}$. 
We have
\[
    g_\beta(z,v) = \dfrac{1}{|v|^{2m+|\beta|}} \sum_{k=0}^{N_\beta}
    \varphi_k \bigg(
        \dfrac{z-v}{|v|}
    \bigg) P_k\bigg(\dfrac{v}{|v|}\bigg),
\]
where each $\varphi_k$ is supported in $B_\varepsilon$ and each $P_k$ is a polynomial, thus
\begin{equation} \label{eq:g-beta}
    |g_\beta(z, v)| \leq \dfrac{C_{\beta}}{|v|^{2m+|\beta|}}, 
\end{equation}
for some constant $C_{\beta}$ that depends only on $\beta$.
Combining the estimates \eqref{eq:h_1-r_j}, \eqref{eq:Lj-3}, \eqref{eq:Lj-1}, \eqref{eq:Lj-2}, \eqref{eq:g-beta} and the identity \eqref{eq:LjF} we conclude
\begin{equation*}
\big|L_j F(u,v)\big| \leq Q^{k+1}m_k|v|^k,
\eqno{
    \begin{matrix}
        u \in V,\phantom{_+} & 0< |v| < \delta, \\
        k \in \Z_+, &1 \leq j \leq m,
    \end{matrix}
}
\end{equation*}
where $Q > 0$ is a constant obtained combining the former constants.
\comment{

\color{red}
If $\varphi \in \Cl^\infty_c(B_\varepsilon)$, then
\begin{align*}
    \dfrac{\del}{\del v_j} \bigg\{
        \dfrac{1}{|v|^n} \varphi \bigg(
            \dfrac{z-v}{|v|}
        \bigg)
    \bigg\}
    &=
    \dfrac{\del}{\del v_j} \bigg\{
        \dfrac{1}{|v|^n}
    \bigg\} \varphi \bigg(
        \dfrac{z-v}{|v|}
    \bigg) + 
    \dfrac{1}{|v|^n}
    \dfrac{\del}{\del v_j} \bigg\{
        \varphi \bigg(
            \dfrac{z-v}{|v|}
        \bigg)
    \bigg\} \\
    &=
    - \dfrac{n v_j}{|v|^{n+2}}
    \varphi \bigg(
        \dfrac{z-v}{|v|}
    \bigg) + 
    \dfrac{1}{|v|^n}
    \sum_{k=1}^m \del_k \varphi \bigg(
        \dfrac{z-v}{|v|}
    \bigg)
    \dfrac{\del}{\del v_j} \bigg\{
        \dfrac{z_k - v_k}{|v|}
    \bigg\} \\
    &= 
    - \dfrac{n v_j}{|v|^{n+2}}
    \varphi \bigg(
        \dfrac{z-v}{|v|}
    \bigg) + 
    \dfrac{1}{|v|^n} \Bigg\{
        \del_j \varphi \bigg(
            \dfrac{z-v}{|v|}
        \bigg)
        \dfrac{1}{|v|^2} \bigg( 
            -|v| + \dfrac{(z_j - v_j) v_j}{|v|}
        \bigg) + 
        \sum_{k \neq j} \del_k \varphi \bigg(
            \dfrac{z-v}{|v|}
        \bigg)
        \dfrac{(v_k - z_k)v_j}{|v|^3}
    \Bigg\} \\
    &=
    \dfrac{1}{|v|^{n+1}}\Bigg\{
        -\dfrac{n v_j}{|v|} \varphi \bigg(
            \dfrac{z-v}{|v|}
        \bigg) +
        \del_j \varphi \bigg(
            \dfrac{z-v}{|v|}
        \bigg) \bigg(
            -1 + \dfrac{(z_j - v_j)v_j}{|v|^2}
        \bigg)
        +
        \sum_{k \neq j} \del_k \varphi \bigg(
            \dfrac{z-v}{|v|}
        \bigg)
        \dfrac{(v_k - z_k)v_j}{|v|^2}
    \Bigg\}.
\end{align*}
If, furthermore, $\lambda$ has the form $P(v/|v|)$, where $P$ is a polynomial, then
\begin{align*}
    \dfrac{\del}{\del v_j} \bigg\{
        \dfrac{1}{|v|^n} \varphi \bigg(
            \dfrac{z-v}{|v|}
        \bigg) \lambda(v)
    \bigg\}
    &=
    \dfrac{\del}{\del v_j} \bigg\{
        \dfrac{1}{|v|^n} \varphi \bigg(
            \dfrac{z-v}{|v|}
        \bigg)
    \bigg\} \lambda(v) + 
    \dfrac{1}{|v|^n} \varphi \bigg(
            \dfrac{z-v}{|v|}
    \bigg) \dfrac{\del \lambda}{\del v_j}(v)
    \\
    &=
    \dfrac{1}{|v|^{n+1}}\Bigg\{
        -\dfrac{n v_j}{|v|} \varphi \bigg(
            \dfrac{z-v}{|v|}
        \bigg) +
        \del_j \varphi \bigg(
            \dfrac{z-v}{|v|}
        \bigg) \bigg(
            -1 + \dfrac{(z_j - v_j)v_j}{|v|^2}
        \bigg)
        +
        \sum_{k \neq j} \del_k \varphi \bigg(
            \dfrac{z-v}{|v|}
        \bigg)
        \dfrac{(v_k - z_k)v_j}{|v|^2}
    \Bigg\} \lambda(v) \\
    &\quad + 
    \dfrac{1}{|v|^n} \varphi \bigg(
            \dfrac{z-v}{|v|}
    \bigg) \dfrac{\del \lambda}{\del v_j}(v)
    \\
    &=
    \dfrac{1}{|v|^{n+1}} \sum_{k=0}^{m+2}
    \varphi_k \bigg(
        \dfrac{z-v}{|v|}
    \bigg) \lambda_k(v),
\end{align*}
where
\begin{align*}
    \varphi_0(w)     &= \varphi_{m+1}(w) = \varphi(w), \\
    \varphi_{m+2}(w) &= \del_j \varphi(w), \\
    \lambda_0(v)     &= -\dfrac{n v_j \lambda(v)}{|v|}, \\
    \lambda_{m+1}(v) &= \dfrac{\del \lambda}{\del v_j}(v)|v|, \\
    \lambda_{m+2}(v) &= -1,
\end{align*}
and for $1 \leq k \leq m$:
\begin{align*}
    \varphi_k(w) &= \del_k \varphi(w) w_k \\
    \lambda_k(v) &= \dfrac{v_j \lambda(v)}{|v|}
\end{align*}
We only need to check that $\lambda_{m+1}$ has also the form $P(v/|v|)$:
\begin{align*}
    \dfrac{\del}{\del v_j} \bigg[ 
        P \bigg( \dfrac{v}{|v|} \bigg)
    \bigg] |v|
    &= \sum_{\ell = 1}^m \del_\ell P \bigg( \dfrac{v}{|v|} \bigg)
    \dfrac{\del}{\del v_j} \bigg[ 
        \dfrac{v_\ell}{|v|}
    \bigg]|v| \\
    &=
    \bigg\{
    \del_j P \bigg( \dfrac{v}{|v|} \bigg)
    \dfrac{1}{|v|}
    -
    \sum_{\ell = 1}^m \del_\ell P \bigg( \dfrac{v}{|v|} \bigg)
    \dfrac{v_j v_\ell}{|v|^3}
    \bigg\}
    |v| \\
    &=
    \del_j P \bigg( \dfrac{v}{|v|} \bigg)
    -
    \sum_{\ell = 1}^m \del_\ell P \bigg( \dfrac{v}{|v|} \bigg)
    \dfrac{v_j v_\ell}{|v|^2}.
\end{align*}
}
It remains to prove that $F$ is continuous up to $v=0$, indeed in the following we get an even stronger property.
Let $\gamma, \beta \in \Z_+^m$ be fixed multi-indexes, 
the identity
\begin{align*}
    \del_v^\beta X^\gamma F(u,v) - \beta! X^\gamma f_\beta (u)
    &= 
    \del_v^\beta \bigg\{
        X^\gamma F(u,v) - \sum_{|\alpha| \leq |\beta|} X^\gamma f_\alpha(u) v^\alpha 
    \bigg\}(u,v) \\
    &=
    \int_{\C^m} \del_{v}^{\beta} \bigg\{ 
        \frac{1}{|v|^{2m}} \psi \bigg(\frac{z-v}{|v|}\bigg) 
    \bigg\}
    \sum_{|\alpha|=|\beta|+1}^{\N((1+\varepsilon)cemC|z|)}
    X^\gamma f_{\alpha}(u)z^\alpha \, \d \lambda(z) \\
    &=
    \int_{\C^m} 
    |v|^{2m}g_\beta(|v|w+v,v)
    \sum_{|\alpha|=|\beta|+1}^{\N((1+\varepsilon)cemC ||v|w + v |)} X^\gamma f_{\alpha}(u)
    (|v|w + v)^\alpha
    \, \d \lambda(w),
\end{align*}
implies
\begin{align} %\label{eq:estimate-Ci-t-u}
    \big| 
        \del_v^\beta X^\gamma &F(u,v) - \beta! X^\gamma f_\beta (u)
    \big|
    \leq \nonumber \\
    &\leq 
    \frac{C_\beta}{|v|^{|\beta|}}\int_{|w|\leq\epsilon} \sum_{|\alpha|=|\beta|+1}^{\N((1+\varepsilon)cemC||v|w+v|)}
    C^{1+|\alpha|+|\gamma|} \frac{M_{|\alpha| + |\gamma| }}{\alpha!} ||v|w+v|^{|\alpha|} \d \lambda(w) \nonumber \\
    &\leq 
    \frac{C_\beta}{|v|^{|\beta|}} \int_{|w|\leq\epsilon} \sum_{|\alpha| = |\beta|+1}^{\N((1+\varepsilon)cemC||v|w+v|)}
    C^{1+|\alpha|+|\gamma|} c^{|\alpha|+|\gamma|+1}  \frac{M_{|\gamma|}M_{|\alpha|}}{\alpha!}||v|w+v|^{|\alpha|} \d \lambda(w) \label{eq:M-alpha-gamma-M-alpha-M-gamma} \\ 
    &\leq 
    \frac{C_\beta}{|v|^{|\beta|}} (m-1)! \, e \, C^{1+|\gamma|} c^{1+|\gamma|} M_{|\gamma|}
    \int_{|w|\leq\epsilon} 
    \sum_{k = |\beta|+1}^{\N((1+\varepsilon)cemC||v|w+v|)}
    \frac{M_{k}}{k!} \big((1+\varepsilon)cemC||v|w+v|\big)^{k} \dfrac{1}{(1+\varepsilon)^k} \, \d \lambda(w) \nonumber \\
    &\leq 
    \frac{C_\beta}{|v|^{|\beta|}} (m-1)! \, e \, C^{1+|\gamma|} c^{1+|\gamma|} M_{|\gamma|}
    \int_{|w|\leq\epsilon} 
    \frac{M_{|\beta|+1}}{(|\beta|+1)!} \big((1+\varepsilon)cemC||v|w+v|\big)^{|\beta|+1}
    \sum_{k = |\beta|+1}^{\infty}
    \dfrac{1}{(1+\varepsilon)^k} \, \d \lambda(w) \label{eq:M-k-M-beta-mais-1} \\
    &\leq 
    \Bigg\{
        C_\beta (m-1)! \, e \, C^{1+|\gamma|} c^{1+|\gamma|} M_{|\gamma|}
        \frac{M_{|\beta|+1}}{(|\beta|+1)!}
        \big((1+\varepsilon)cemC \big)^{|\beta|+1}
        \sum_{k = |\beta|+1}^{\infty}
        \dfrac{1}{(1+\varepsilon)^k}
        \int_{|w|\leq\epsilon} 
        (|w|+1)^{|\beta|+1} \, \d \lambda(w) 
    \Bigg\} |v|, \label{eq:del-v-X-gamma}
\end{align}
\comment{
\color{red}
\begin{align*}
    F(u,v) - f(u)
    &= 
    F(u,v) - f_0(u) \\
    &=
    \int_{\C^m}
    \frac{1}{|v|^{2m}} \psi \bigg(\frac{z-v}{|v|}\bigg) 
    \sum_{|\alpha|=1}^{\N((1+\varepsilon)cemC|z|)}
    f_{\alpha}(u)z^\alpha \, \d \lambda(z) \\
    &=
    \int_{\C^m}
    \psi(w)
    \sum_{|\alpha|=1}^{\N((1+\varepsilon)cemC ||v|w + v |)} f_{\alpha}(u)
    (|v|w + v)^\alpha
    \, \d \lambda(w),
\end{align*}
\begin{align} %\label{eq:estimate-Ci-t-u}
    |F(u,v) - f(u)|
    &\leq 
    \int_{|w|\leq\epsilon}
    |\psi(w)|
    \sum_{|\alpha|=1}^{\N((1+\varepsilon)cemC||v|w+v|)}
    C^{1+|\alpha|} \frac{M_{|\alpha|}}{\alpha!} ||v|w+v|^{|\alpha|} \d \lambda(w) \nonumber \\
    &\leq 
    (m-1)! \, e \, C c 
    \int_{|w|\leq\epsilon}
    |\psi(w)|
    \sum_{k = 1}^{\N((1+\varepsilon)cemC||v|w+v|)}
    \frac{M_{k}}{k!} \big((1+\varepsilon)cemC||v|w+v|\big)^{k} \dfrac{1}{(1+\varepsilon)^k} \, \d \lambda(w) \nonumber \\
    &\leq 
    (m-1)! \, e \, C c
    \int_{|w|\leq\epsilon} 
    |\psi(w)|
    (1+\varepsilon)cemC||v|w+v|
    \sum_{k = 1}^{\infty}
    \dfrac{1}{(1+\varepsilon)^k} \, \d \lambda(w) \label{eq:M-k-M-beta-mais-1} \\
    &\leq 
    \Bigg\{
    \dfrac{m! \, e^2 \, C^2 c^2 (1+\varepsilon)}{\varepsilon}
        \int_{|w|\leq\epsilon}
        |\psi(w)|
        (|w|+1) \, \d \lambda(w) 
    \Bigg\} |v|, \nonumber
\end{align}
}
% o comment abaixo tem os detalhes
\comment{
\color{red}
\begin{align} %\label{eq:estimate-Ci-t-u}
    \big| 
        \del_v^\beta X^\gamma &F(u,v) - \beta! X^\gamma f_\beta (u)
    \big|
    \leq \nonumber \\
    &\leq 
    \frac{C_{10}}{2|v|^{|\beta|}}
    \int_{|w| \leq \varepsilon}
    \sum_{|\alpha|=|\beta|+1}^{\N((1+\varepsilon)cemC||v|w+v|)}
    \left|X^\gamma f_\alpha(u)\right||\zeta(w)^\alpha| \, \d \lambda \nonumber \\
    &\leq 
    \frac{C_{10}}{2|v|^{|\beta|}}\int_{|w|\leq\epsilon} \sum_{|\alpha|=|\beta|+1}^{\N((1+\varepsilon)cemC||v|w+v|)}
    C^{1+|\alpha|+|\gamma|} \frac{M_{|\alpha| + |\gamma| }}{\alpha!} ||v|w+v|^{|\alpha|} \d \lambda \nonumber \\
    &\leq 
    \frac{C_{10}}{2|v|^{|\beta|}} \int_{|w|\leq\epsilon} \sum_{|\alpha| = |\beta|+1}^{\N((1+\varepsilon)cemC||v|w+v|)}
    C^{1+|\alpha|+|\gamma|} c^{|\alpha|+|\gamma|+1}  \frac{M_{|\gamma|}M_{|\alpha|}}{\alpha!}||v|w+v|^{|\alpha|} \d \lambda \nonumber \\ 
    &\leq 
    \frac{C_{10}}{2|v|^{|\beta|}} \int_{|w|\leq\epsilon} \sum_{|\alpha| = |\beta|+1}^{\N((1+\varepsilon)cemC||v|w+v|)}
    C^{1+|\alpha|+|\gamma|} c^{|\alpha|+|\gamma|+1}  \frac{m^{|\alpha|} M_{|\gamma|}M_{|\alpha|}}{|\alpha|!} ||v|w+v|^{|\alpha|} \d \lambda \nonumber \\ 
    &\leq 
    \frac{C_{10}}{2|v|^{|\beta|}} \int_{|w|\leq\epsilon} \sum_{k = |\beta|+1}^{\N((1+\varepsilon)cemC||v|w+v|)}
    \sum_{|\alpha| = k} 
    C^{1+k+|\gamma|} c^{k+|\gamma|+1}  \frac{m^{k} M_{|\gamma|}M_{k}}{k!} ||v|w+v|^{k} \d \lambda \nonumber \\
    &\leq 
    \frac{C_{10}}{2|v|^{|\beta|}} \int_{|w|\leq\epsilon} \sum_{k = |\beta|+1}^{\N((1+\varepsilon)cemC||v|w+v|)}
    (m-1)! \, e^{k+1} 
    C^{1+k+|\gamma|} c^{k+|\gamma|+1}  \frac{m^{k} M_{|\gamma|}M_{k}}{k!} ||v|w+v|^{k} \d \lambda \nonumber \\
    &\leq 
    \frac{C_{10}}{2|v|^{|\beta|}} \int_{|w|\leq\epsilon} \sum_{k = |\beta|+1}^{\N((1+\varepsilon)cemC||v|w+v|)}
    (m-1)! \, (1 + \varepsilon)^k e^{k+1} 
    C^{1+k+|\gamma|} c^{k+|\gamma|+1}  \frac{m^{k} M_{|\gamma|}M_{k}}{k!} ||v|w+v|^{k} \dfrac{1}{(1+\varepsilon)^k} \, \d \lambda \nonumber \\
    &\leq 
    \frac{C_{10}}{2|v|^{|\beta|}} (m-1)! \, e \, C^{1+|\gamma|} c^{1+|\gamma|} M_{|\gamma|}
    \int_{|w|\leq\epsilon} 
    \sum_{k = |\beta|+1}^{\N((1+\varepsilon)cemC||v|w+v|)}
    \frac{M_{k}}{k!} \big((1+\varepsilon)cemC||v|w+v|\big)^{k} \dfrac{1}{(1+\varepsilon)^k} \, \d \lambda \nonumber \\
    &\leq 
    \frac{C_{10}}{2|v|^{|\beta|}} (m-1)! \, e \, C^{1+|\gamma|} c^{1+|\gamma|} M_{|\gamma|}
    \int_{|w|\leq\epsilon} 
    \frac{M_{|\beta|+1}}{(|\beta|+1)!} \big((1+\varepsilon)cemC||v|w+v|\big)^{|\beta|+1}
    \sum_{k = |\beta|+1}^{\infty}
    \dfrac{1}{(1+\varepsilon)^k} \, \d \lambda \nonumber \\
    &\leq 
    \frac{C_{10}}{2|v|^{|\beta|}} (m-1)! \, e \, C^{1+|\gamma|} c^{1+|\gamma|} M_{|\gamma|}
    \int_{|w|\leq\epsilon} 
    \frac{M_{|\beta|+1}}{(|\beta|+1)!} \big((1+\varepsilon)cemC(|v||w|+|v|) \big)^{|\beta|+1}
    \sum_{k = |\beta|+1}^{\infty}
    \dfrac{1}{(1+\varepsilon)^k} \, \d \lambda \nonumber \\
    &\leq 
    \frac{C_{10}}{2|v|^{|\beta|}} (m-1)! \, e \, C^{1+|\gamma|} c^{1+|\gamma|} M_{|\gamma|}
    \int_{|w|\leq\epsilon} 
    \frac{M_{|\beta|+1}}{(|\beta|+1)!} \big((1+\varepsilon)cemC|v|(|w|+1) \big)^{|\beta|+1}
    \sum_{k = |\beta|+1}^{\infty}
    \dfrac{1}{(1+\varepsilon)^k} \, \d \lambda \nonumber \\
    &\leq 
    \Bigg\{
        \frac{C_{10}}{2} (m-1)! \, e \, C^{1+|\gamma|} c^{1+|\gamma|} M_{|\gamma|}
        \frac{M_{|\beta|+1}}{(|\beta|+1)!}
        \big((1+\varepsilon)cemC \big)^{|\beta|+1}
        \sum_{k = |\beta|+1}^{\infty}
        \dfrac{1}{(1+\varepsilon)^k}
        \int_{|w|\leq\epsilon} 
        (|w|+1)^{|\beta|+1} \, \d \lambda 
    \Bigg\} |v| \nonumber
\end{align}
\color{black}
}
where the estimate \eqref{eq:M-alpha-gamma-M-alpha-M-gamma} follows from the moderate growth condition (see \eqref{eq:M-k-moderate-growth-M-n}) and in \eqref{eq:M-k-M-beta-mais-1} we have applied Lemma \ref{lem:mkrk}.
Thus, for every $\gamma$ the function $X^\gamma F$ is $\Cl^\infty$-smooth in the $v$-variable up to $v=0$.
In particular, setting $\gamma = \beta = 0$, we can define $F(u,0) = f(u)$ for every $u \in V$ and $F$ is a continuous extension of $f$.
Without the moderate growth assumption, the entire proof follows the same steps above with the following adjustments: for a fixed $\kappa$ one should replace the occurrences of $c$ in $\delta$, $F$ and $n$ by $c^\kappa$, and the factor in the estimate \eqref{eq:M-alpha-gamma-M-alpha-M-gamma} becomes $c^{\kappa (|\alpha|+|\gamma|)}M_{|\alpha|}$.
\color{black}
\end{proof}

\begin{rmk}\label{rmk:holomorphic-parameters}
We note that if $f$ depends holomorphically on some variable $\zeta$, then the extension $F$ would also be holomorphic with respect to $\zeta$.
\end{rmk}

\begin{cor} \label{cor:extension-maximally-real}
If $U \subset \R^m$ is an open neighbourhood of the origin and 
$X = (X_j : 1 \leq j \leq m )$
is an ordered set of commuting linearly independent $\Cl^\infty$-smooth complex vector fields in $U$, and $\kappa \in \Z_+$ then the extension $F$ for $f \in \Cl^\M(U,X)$ in Theorem \ref{thm:ext} is $\Cl^\kappa$-smooth.
When $\M$ has moderate growth, the extension $F$ is $\Cl^\infty$-smooth.
\end{cor}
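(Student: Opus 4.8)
The plan is to deduce the corollary from the partial smoothness of $F$ in the $X$-directions already provided by Theorem \ref{thm:ext}, by expressing ordinary $u$-derivatives through the operators $X^\gamma$. Writing $X_j=\sum_{k}a_{jk}(u)\del_{u_k}$, the hypothesis of linear independence means the matrix $A=(a_{jk})$ is invertible at every point of $U\subset\R^m$, with $\Cl^\infty$-smooth inverse $B=(b_{kj})$; hence $\del_{u_k}=\sum_j b_{kj}(u)X_j$ as differential operators, and iterating this identity, commuting the coefficient functions past the $X_j$ by the Leibniz rule, gives for each multi-index $a$ an identity
\[
\del_u^{a}=\sum_{|\gamma|\le|a|}c_{a,\gamma}(u)\,X^{\gamma},\qquad c_{a,\gamma}\in\Cl^\infty(V),
\]
of operators acting only in the $u$-variable. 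Consequently, since $F$ is $\Cl^\infty$ on $V\times(\R^m\setminus\{0\})$ and continuous on $V\times\R^m$ by Theorem \ref{thm:ext}, we have there, for all multi-indices $a,b$,
\[
\del_u^{a}\del_v^{b}F(u,v)=\sum_{|\gamma|\le|a|}c_{a,\gamma}(u)\,\del_v^{b}X^{\gamma}F(u,v).
\]

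Next I would use the estimate \eqref{eq:del-v-X-gamma} from the proof of Theorem \ref{thm:ext}: for every $\beta$ and every $\gamma$ with $|\gamma|\le\kappa$ (for all $\gamma$ when $\M$ has moderate growth) there is $C_{\beta,\gamma}>0$ with
\[
\big|\del_v^{\beta}X^{\gamma}F(u,v)-X^{\gamma+\beta}f(u)\big|\le C_{\beta,\gamma}\,|v|,\qquad u\in V,\ 0<|v|<\delta.
\]
Thus $\del_v^{\beta}X^{\gamma}F$ converges, as $v\to0$, uniformly on $V$, to the $\Cl^\infty$ function $u\mapsto X^{\gamma+\beta}f(u)$; together with its continuity on $V\times(\R^m\setminus\{0\})$ this shows $\del_v^{\beta}X^{\gamma}F$ extends to a continuous function $H_{\gamma,\beta}$ on $V\times\R^m$. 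Feeding this into the last displayed identity, for all $a,b$ with $|a|\le\kappa$ (all $a,b$ in the moderate growth case) the classical derivative $\del_u^{a}\del_v^{b}F$ on $V\times(\R^m\setminus\{0\})$ extends to the continuous function $G_{a,b}:=\sum_{|\gamma|\le|a|}c_{a,\gamma}H_{\gamma,b}$ on $V\times\R^m$.

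It remains to upgrade the $G_{a,b}$ to honest derivatives of $F$ across the hyperplane $\{v=0\}$, which I would prove by induction on $j=|a|+|b|$: the claim is that $F\in\Cl^{j}(V\times\R^m)$ and $\del_u^{a}\del_v^{b}F=G_{a,b}$ whenever $|a|+|b|=j$. The case $j=0$ is $G_{0,0}=F$. For the step, fix $(a,b)$ with $|a|+|b|=j<\kappa$; knowing $\del_u^{a}\del_v^{b}F=G_{a,b}$ on $V\times\R^m$, it suffices to show $G_{a,b}$ has continuous first partials there, equal to $G_{a+e_k,b}$ in the $u$-directions and $G_{a,b+e_l}$ in the $v$-directions. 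Off $\{v=0\}$ this is clear. At a point of $\{v=0\}$ one integrates the known derivative along a segment lying in $\{v\ne0\}$ — a tangent segment $t\mapsto(u_0+te_k,v)$ for the $u$-case, a transverse segment $r\mapsto(u_0,re_l)$ with $r$ between $s_0$ and $s$ of the same sign for the $v$-case — and passes to the limit ($v\to0$, resp.\ $s_0\to0$): the left-hand side converges by continuity of $G_{a,b}$, the integral by dominated convergence (the integrand being continuous, hence bounded, up to $\{v=0\}$); differentiating the limiting identity yields the asserted partial at the hyperplane. Hence $G_{a,b}\in\Cl^1$, so $F\in\Cl^{j+1}$, with its order-$(j+1)$ derivatives equal to the relevant $G_{a',b'}$. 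This closes the induction at $j=\kappa$, and runs without an upper bound when $\M$ has moderate growth, giving $F\in\Cl^\infty$.

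The one step deserving care is the last — verifying that $\{v=0\}$ carries no singular contribution, i.e.\ that the classical derivatives computed off the hyperplane really are the derivatives of the continuous function $F$ on all of $V\times\R^m$; the rest is the bookkeeping behind $\del_u^{a}=\sum c_{a,\gamma}X^{\gamma}$ and the constraint $|\gamma|\le|a|\le\kappa$ that keeps Theorem \ref{thm:ext} applicable. Equivalently one may argue distributionally: $F$ is continuous, its distributional derivatives of order $\le\kappa$ coincide off the null set $\{v=0\}$ with the continuous functions $G_{a,b}$, nothing supported on $\{v=0\}$ can arise since $F$ has no jump across it, and a locally integrable function whose distributional derivatives up to order $k$ are continuous belongs to $\Cl^k$.
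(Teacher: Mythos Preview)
Your proof is correct and follows essentially the same route as the paper's: the paper's own argument is a one-sentence sketch invoking precisely the ingredients you use --- the estimate \eqref{eq:del-v-X-gamma}, the fact that $X$ is a $\Cl^\infty$ frame for $\C\T\R^m$ (hence $\del_u^a=\sum_{|\gamma|\le|a|}c_{a,\gamma}X^\gamma$), and the Mean Value Theorem to recover honest derivatives across $\{v=0\}$. You have simply unpacked these steps in full detail, including the induction on $|a|+|b|$ and the careful check that no singular contribution is supported on the hyperplane; the constraint $|\gamma|\le|a|\le\kappa$ keeping \eqref{eq:del-v-X-gamma} applicable in the non--moderate-growth case is exactly the point.
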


\begin{proof}
In view of estimate \eqref{eq:del-v-X-gamma} the fact that $X$ is a $\Cl^\infty$-smooth local frame for the complexified tangent bundle $\C \T \R^m$ together with the Mean Value Theorem ensures that $F$ is a $\Cl^\infty$-smooth extension of $f$ when $\M$ has moderate growth and $\Cl^\kappa$-smooth in the general case.
\end{proof}

\begin{cor} \label{cor:extension-nonlinear}
If $U \subset \R^d$ is an open neighbourhood of the origin, $X = (X_j : 1 \leq j \leq m )$ is an ordered set of commuting $\Cl^\M$-smooth complex vector fields in $U$, and $\kappa \in \Z_+$, then for any $f \in \Cl^\M(U)$ the extension $F$ in Theorem \ref{thm:ext} is $\Cl^\kappa$-smooth.
When $\M$ has moderate growth, the extension $F$ is $\Cl^\infty$-smooth.
\end{cor}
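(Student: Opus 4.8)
The plan is to deduce this from Theorem~\ref{thm:ext} in the same way Corollary~\ref{cor:extension-maximally-real} is deduced, the only genuinely new input being a bound on the \emph{mixed} iterates $\del_u^\mu X^\alpha f$. First I would verify that $f\in\Cl^\M(U)$ is legitimate input for Theorem~\ref{thm:ext}, i.e. that $f\in\Cl^\M(U,X)$. Since $f\in\Cl^\M(U)$ and the coefficients of each $X_j$ lie in $\Cl^\M(U)$, the stability of regular Denjoy--Carleman classes under their own first-order differential operators — a routine consequence of the axioms (a)--(d), using that $\Cl^\M$ is an algebra closed under differentiation, together with multinomial/Faà di Bruno bookkeeping to track how the constants propagate through successive applications and commutators of the $X_j$ and the $\del_{u_k}$ — yields, for every compact $K\subset U$, a constant $C>0$ with
\[
    \sup_{K}\big|\del_u^\mu X^\alpha f\big|\leq C^{|\mu|+|\alpha|+1}M_{|\mu|+|\alpha|},\qquad \mu\in\Z_+^d,\ \alpha\in\Z_+^m.
\]
Taking $\mu=0$ gives $f\in\Cl^\M(U,X)$, so Theorem~\ref{thm:ext} applies and produces the extension $F$, built from the coefficients $f_\alpha(u)=X^\alpha f(u)/\alpha!$.

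Next I would record that away from $\{v=0\}$ there is nothing to prove: the integral in \eqref{eq:F} is supported in the annulus $(1-\varepsilon)|v|<|z|<(1+\varepsilon)|v|$, where the summation index stays bounded, and each $f_\alpha$ is $\Cl^\infty$ in $u$, so $F$ is $\Cl^\infty$ on $V\times(\C^m\setminus\{0\})$. The point is therefore regularity across $\{v=0\}$, and here I would run the computation at the end of the proof of Theorem~\ref{thm:ext} — the one culminating in \eqref{eq:del-v-X-gamma} — verbatim, with $\del_u^\mu$ in place of $X^\gamma$. Differentiation in $u$ commutes with the integral and hits only the coefficients, while $\del_v^\beta$ produces only the factor $g_\beta$ estimated in \eqref{eq:g-beta}; subtracting the partial Taylor sum and using \eqref{eq:polynomial} exactly as before gives, for $|v|$ small,
\[
    \del_v^\beta\del_u^\mu F(u,v)-\beta!\,\del_u^\mu f_\beta(u)
    =\int_{\C^m}|v|^{2m}g_\beta(|v|w+v,v)\!\!\sum_{|\alpha|=|\beta|+1}^{\N((1+\varepsilon)cemC\,||v|w+v|)}\!\!\del_u^\mu f_\alpha(u)\,(|v|w+v)^\alpha\,\d\lambda(w).
\]

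Feeding the mixed bound of the first paragraph into this identity in place of \eqref{eq:M-alpha-gamma-M-alpha-M-gamma}, and invoking Lemma~\ref{lem:mkrk} together with the moderate growth splitting $M_{|\alpha|+|\mu|}\leq c^{|\alpha|+|\mu|+1}M_{|\alpha|}M_{|\mu|}$ from \eqref{eq:M-k-moderate-growth-M-n}, one obtains $\big|\del_v^\beta\del_u^\mu F(u,v)-\beta!\,\del_u^\mu f_\beta(u)\big|\leq C_{\beta,\mu}|v|$ for all $\mu,\beta$. Without moderate growth one makes, as at the end of the proof of Theorem~\ref{thm:ext}, the substitution $c\rightsquigarrow c^{\kappa}$ in $\delta$, $F$ and $n$, and then \eqref{eq:M-k-c-kappa-k-M-n} supplies the splitting whenever $|\mu|\leq\kappa$. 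In either case every mixed partial $\del_u^\mu\del_v^\beta F$ with $|\mu|\leq\kappa$ (all $\mu$, in the moderate growth case) extends continuously up to $\{v=0\}$, with boundary value $\del_u^\mu X^\beta f$, uniformly for $u$ in compact sets. Since $F$ is $\Cl^\infty$ off $\{v=0\}$, the Mean Value Theorem argument used for Corollary~\ref{cor:extension-maximally-real} then shows these continuous extensions are the genuine partials of the extended $F$; as a partial of order $\leq\kappa$ in $(u,v)$ has $|\mu|\leq\kappa$, we conclude $F\in\Cl^\kappa(V\times\R^m)$ in general and $F\in\Cl^\infty(V\times\R^m)$ when $\M$ has moderate growth.

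The step that needs care — and the main obstacle — is precisely the mixed estimate on $\del_u^\mu X^\alpha f$ invoked in the first paragraph. Unlike in Corollary~\ref{cor:extension-maximally-real}, the $X_j$ and the $\del_{u_k}$ do not form a commuting frame, so one cannot short-circuit the argument by an identity of the form $\Cl^\M(U,X)=\Cl^\M(U)$ and read off $u$-smoothness from $X$-smoothness; instead one must control $|\mu|+|\alpha|$ successive first-order operations, keeping the geometric growth of the constants tied to the $\Cl^\M$ norms of the coefficients of the $X_j$. Everything else is a line-by-line transcription of the corresponding estimates already carried out in the proof of Theorem~\ref{thm:ext}.
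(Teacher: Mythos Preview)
Your proposal is correct and follows essentially the same route as the paper: both reduce the $\Cl^\kappa$- (resp.\ $\Cl^\infty$-) smoothness of $F$ across $\{v=0\}$ to the mixed estimate $\big|\del_u^\mu f_\alpha(u)\big|\leq C^{|\mu|+|\alpha|+1}M_{|\mu|+|\alpha|}/\alpha!$ and then re-run the computation leading to \eqref{eq:del-v-X-gamma} with $\del_u^\mu$ in place of $X^\gamma$. The only difference is that where you sketch this key estimate as ``routine Faà di Bruno bookkeeping'' based on the algebra/differentiation closure of $\Cl^\M$, the paper simply cites it from the literature (estimate~2.6 in \cite{hoepfner:10} and Lemma~3.2 in \cite{petronilho:09}), noting that the proof hinges on the log-convexity of $m_k$.
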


\begin{proof}
In order to prove the smoothness up to $v=0$, we must show that for every $\beta,\gamma \in \mathbb{Z}_+^d$ (with $|\gamma| \leq \kappa$ in the geneal case) there is $C_\sharp > 0$ such that the following estimate holds

\begin{equation*}
    |\partial_v^\beta \partial_u^\gamma F(u,v) - \beta! \partial_u^\gamma f_\beta(u)| \leq C_\sharp |v|,
    \eqno{0 < |v| < \delta}.
\end{equation*}
Arguing as in the inequalities \eqref{eq:del-v-X-gamma}  it suffices to show that there exists some positive constant $C>0$ such that for every $u \in V$ and every $\gamma,\alpha \in \mathbb{Z}_+^d$ the following estimate holds true:

\begin{equation}
    \left| \partial_u^\gamma f_\alpha (u) \right| \leq C^{|\gamma|+|\alpha|+1} \frac{M_{|\gamma|+|\alpha|}}{\alpha!}.
\end{equation}
This estimate is proven in \cite{hoepfner:10} (see estimate 2.6 on page 1724) and \cite{petronilho:09} (see Lemma 3.2), and it relies on the log-convexity for the sequence $m_k = M_k/k!$.
\end{proof}

\section{Systems of first-order nonlinear PDEs}\label{sec:nonlinear}

In this section we generalize the main result in \cite{braundasilva} for systems of first-order nonlinear PDEs. Let $\M$ be a regular sequence and let us denote the coordinates in $\R^d \times \R^n$ by $(x,t) = (x_1, \dots, x_d, t_1, \dots, t_n)$.

\begin{defn}
As in \cite{bch}, we define the (usual) F.B.I. transform of a compactly supported distribution $u$ by
\begin{equation*}
\F[u](x,\xi)=u_y\left(e^{i(x-y)\cdot\xi-|\xi|(x-y)^2}\right).
\end{equation*}
\end{defn}
In \cite{furdos}, it is proved that a distribution $u$ on $\Omega$ belongs to $\Cm(\Omega)$ if and only if for every $x_0\in\Omega$ there are $\chi\in\Ci_c(\Omega)$, with $\chi\equiv 1$ in an open neighborhood of $x_0$, $U\subset\Omega$ an open neighborhood of $x_0$ and a positive constant $A$ such that:
\begin{equation}\label{eq:FBI-decay-Cm}
|\F[\chi u](x,\xi)|\leq \frac{A^{k+1}M_k}{|\xi|^k}, \quad k \in \Z_+, \; x \in U, \; \xi \in \R^N \setminus \{0\}.
\end{equation}
This last inequality can be used to microlocalize the notion of $\Cm$-regularity.
As usual, a subset $\Gamma \subset \R^N$ is said to be a \textit{cone} if for every $x \in \Gamma$ and every $t > 0$ we have $tx \in \Gamma$.
\begin{defn}
Let $u$ be distribution $u$ on $\Omega$ and fix $(x_0,\xi_0) \in \Omega \times \R^N$, $\xi_0 \neq 0$. 
\begin{enumerate}
    \item We say that $u$ is $\Cm$-\textit{regular at} $(x_0, \xi_0)$ if there are $\chi\in\Ci_c(\Omega)$, with $\chi\equiv 1$ in an open neighborhood of $x_0$, $U \Subset \Omega$ an open neighborhood of $x_0$ and $\Gamma \subset \R^N \setminus \{0\}$ an open cone, with $\xi_0 \in \Gamma$, such that
    \begin{equation*}
        |\F[\chi u](x,\xi)| \leq \frac{A^{k+1}M_k}{|\xi|^k}, \quad k \in \Z_+, \; x \in U, \; \xi \in \Gamma.
    \end{equation*}
    \item The \textit{Denjoy-Carleman wave-front set of} $u$ \textit{with respect to} $\M$ \textit{at} $x_0$ is given by
    \[
    \WF_\M(u)|_{x_0} \doteq \{(x_0, \xi) \, : \, u \textrm{ is } \textit{not} \;\; \Cm \textrm{-regular at } (x_0, \xi)\}.
    \]
\end{enumerate}
\end{defn}

\comment{
\begin{thm}
Let 
$\Omega \subset \R^d \times \R^n$ 
be an open neighborhood of the origin.
Let $u \in \Cl^2(\Omega)$ be a solution of the system of nonlinear PDEs:
\begin{equation*}
    \dfrac{\del u}{\del t_j} = f_j(x,u,u_x),
    \eqno{ 1 \leq j \leq n,}
\end{equation*} 
where each $f_j(x, \zeta_0, \zeta)$ is a $\Cl^\infty$-function with $\Cm$-regularity with respect to the $x$-variable and holomorphic with respect to $(\zeta_0,\zeta) \in \C \times \C^d$.
Then:
\begin{equation}\label{eq:inclusao}
    \left.\WF_\M(v_0)\right|_0\subset \big\{(0,\xi) \in \Omega^\prime \times \R^N \,:\,\Im \, b(0) \cdot \xi \geq 0 \big\},
\end{equation}
where $v_0 \in \mathcal{C}^2(\Omega^\prime)$ is given by $v_0(x) = v(x,0)$, $x \in \Omega^\prime$, and $b(x) = \nabla_\zeta g(x, v_0(x), v_{0x}(x))$.
\end{thm}
}

\begin{lemma}\label{lem:FBI}
Let $\Omega \subset \R^{N}$ be an open neighborhood of the origin.
Let
\begin{equation*}
    L_j = \dfrac{\del}{\del r_j} + \sum_{\ell=1}^N a_{j \ell}(x)\dfrac{\del}{\del x_\ell},
    \eqno{1 \leq j \leq n,}
\end{equation*}be a vector field in $\Omega \times \R^n$ where $a_{j \ell} \in \mathcal{C}^1(\Omega)$, $1 \leq \ell \leq N$, $1 \leq j \leq n$.
Suppose that for each $1 \leq \ell \leq N$ there exists $Z_\ell \in \mathcal{C}^1(\Omega \times \R^n)$ and $Q, \delta > 0$ such that
\begin{equation*}
    \begin{dcases}
        Z_\ell(x,0) = x_\ell, \\
        \big|L_j Z_\ell(x,r)\big| \leq Q^{k+1} m_k |r|^k, 
    \end{dcases}
    \eqno{
        \begin{array}{l r}
         x \in \Omega, &0 < |r| < \delta, \\
        k \in \Z_+, &1 \leq j \leq n.
        \end{array}
    }
\end{equation*}
Let $\xi_0 \in \R^N \setminus \{0\}$ and $j_0 \in \{ 1, \dots, n\}$ be such that $\Im\,a_{j_0}(0) \cdot \xi_0 < 0$.
Let $\Psi \in \mathcal{C}^1(\Omega \times \R^n)$ be such that
\begin{equation*}
    \big|L_{j_0} \Psi(x,r)\big| \leq Q^{k+1} m_k |r|^k,
    \eqno{
    \begin{matrix*}[r]
        x \in \Omega, & 0 < |r| < \delta, \\
        &k \in \Z_+.
    \end{matrix*}
    }
\end{equation*}
Then there exist an open cone $\Gamma \subset \R^N\setminus \{0\}$, open neighborhoods of the origin $V \Subset U \Subset \Omega$ , a cutoff function $\chi \in \Ci_c(\Omega)$, with $\chi = 1$ on $U$, and a constant $A > 0$ such that $\xi_0 \in \Gamma$ and
\begin{equation*}
    \left|\F\!\left[\chi \Psi_0\right](x,\xi)\right| \leq \dfrac{A^{k+1}M_k}{|\xi|^k}, 
    \eqno{(x,\xi) \in V \times \Gamma, \quad k \in \Z_+,}
\end{equation*}
where $\Psi_0(x) = \Psi(x,0)$.
\end{lemma}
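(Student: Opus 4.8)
The plan is to follow the classical FBI-transform argument à la Baouendi–Chang–Treves / Hanges–Treves, using the approximate first integrals $Z_\ell$ as a substitute for the genuine holomorphic coordinates that would be available in the real-analytic category. The key point is that $Z = (Z_1,\dots,Z_N)$ gives an approximate "hypo-analytic chart" near the origin whose defect is controlled by $Q^{k+1}m_k|r|^k$, which is precisely the kind of error that, after a Planck-scale allocation $k \sim \N(c|\xi|^{-1})$, can be absorbed into a bound of the form $A^{k+1}M_k/|\xi|^k$.

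First I would localize: pick $\chi \in \Ci_c(\Omega)$ equal to $1$ on a neighborhood $U$ of the origin, and consider $\F[\chi\Psi_0](x,\xi)$ for $x$ near $0$. Writing out the FBI integral and using that $\Psi_0(y) = \Psi(y,0)$, the strategy is to deform the contour of integration in the $y$-variable into the complex directions dictated by the approximate integrals $Z_\ell$. Concretely, introduce the phase $e^{i(x-y)\cdot\xi - |\xi|(x-y)^2}$ and replace the real variable $y$ by $Z(y,r)$ for a suitable choice of $r = r(y,\xi)$ with $|r| \sim |\xi|^{-1/2}$ or, more likely, $|r|$ tied to a step of $\N$; then apply Stokes' theorem in the $(y,r)$-variables. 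The boundary term reproduces (up to harmless factors) the original FBI transform, while the "error" term involves $L_{j_0}$ applied to $\Psi$ and to the $Z_\ell$, which by hypothesis is bounded by $Q^{k+1}m_k|r|^k$ for every $k$. The hypothesis $\Im\,a_{j_0}(0)\cdot\xi_0 < 0$ is what guarantees that the deformed phase $\Re\big(i(x - Z(y,r))\cdot\xi\big)$ becomes strictly negative (a genuine exponential decay $e^{-c|\xi||r|}$) for $\xi$ in a small cone $\Gamma$ around $\xi_0$ and $x$ in a small neighborhood $V$: this is the standard "good direction" mechanism, here requiring only $\mathcal{C}^1$ regularity of the coefficients since we have the $Z_\ell$ at our disposal.

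The estimate is then assembled as follows. The contribution of the boundary term away from the deformation and the contribution where the phase decays exponentially are both $\le C e^{-c|\xi||r|}$; the contribution of the $\bar L_{j_0}$-error is, for the optimal $k$, of size roughly $Q^{k+1}m_k|r|^k e^{-c'|\xi||r|}$. Choosing $|r|$ comparable to a fixed small constant (independent of $\xi$) the exponential $e^{-c|\xi|}$ already beats any $A^{k+1}M_k/|\xi|^k$; the subtler regime is near the boundary of the contour where one optimizes in $k$: taking $k = \N(\lambda/|\xi|)$ for appropriate $\lambda$ and invoking Lemma \ref{lem:mkrk} together with the elementary inequality \eqref{eq:h_1-r_j} converts $h$-type bounds into the desired $M_k/|\xi|^k$ bound. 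This is exactly the bookkeeping already performed in the proof of Theorem \ref{thm:ext} and in \cite{braundasilva}, so I would cite those estimates rather than redo them.

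The main obstacle I anticipate is the contour deformation itself and the verification that the deformed phase is uniformly decaying: because $Z$ is only an \emph{approximate} first integral (and only $\mathcal{C}^1$), one cannot literally change variables to $z = Z(y,r)$ and must instead set up the deformation as an honest homotopy of chains in $\Omega \times \R^n$, keeping track of the fact that $L_{j_0}Z_\ell \ne 0$ introduces extra boundary-type terms at each stage of the Stokes argument. Managing these terms — showing they are all governed by the same $Q^{k+1}m_k|r|^k$ bound and hence harmless after optimization — is where the real work lies; once that is in place, the passage to the cone $\Gamma$ and the final $M_k/|\xi|^k$ estimate are routine given the machinery of Section \ref{sec:Denjoy-Carleman_classes}.
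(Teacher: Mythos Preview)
Your overall architecture is right: FBI transform, contour deformation via Stokes in the $(y,r)$-variables, exponential decay coming from $\Im\,a_{j_0}(0)\cdot\xi_0<0$, and the final bookkeeping $e^{-c|\xi|}$ vs.\ $M_k/|\xi|^k$ via $h$, $h_1$, $\N$. But you have misidentified where the work lies, and in doing so you have missed the one step that makes the proof short.

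The paper does \emph{not} carry approximate first integrals through the Stokes argument. Instead it first invokes Lemma 4.1 of \cite{braundasilva} (see also \cite{asano:95}) to replace $L_{j_0}$ by a modified vector field $L_1^\sharp$ on a smaller neighborhood $\Omega_1$ satisfying
\[
L_1^\sharp Z_\ell = 0 \quad\text{exactly}, \qquad L_1^\sharp r_j = \delta_{j_0 j}, \qquad \big|L_1^\sharp \Psi(x,r)\big| \le Q^{k+1} m_k |r|^k.
\]
The point is that once $Z$ is a \emph{genuine} first integral of $L_1^\sharp$, the exterior derivative identity
\[
\d\big(H \,\d r_1 \wedge \cdots \wedge \widehat{\d r_{j_0}} \wedge \cdots \wedge \d r_n \wedge \d Z\big) = (-1)^{j_0+1}(L_1^\sharp H)\,\d r \wedge \d Z
\]
holds on the nose, and the only ``error'' term in the Stokes computation is $L_1^\sharp$ applied to the FBI phase times $\Psi$. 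From there the argument is identical to Lemma 4.2 of \cite{braundasilva}, and the paper simply cites it.

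So the ``main obstacle'' you anticipate---tracking the extra boundary-type terms generated by $L_{j_0}Z_\ell \neq 0$ through an honest homotopy of chains---never arises, because it has been transformed away before the Stokes step. Your direct approach could in principle be made to work (those extra terms are also $O(Q^{k+1}m_k|r|^k)$ and could be absorbed), but your outline is vague precisely at the point where it would have to become concrete: you do not specify the deformation (the vector field $L_{j_0}$ is transverse to $r=0$, so the ``contour'' is the graph $\{(y,r): r = s\,e_{j_0}\}$ for $s\in[0,\sigma]$, not a substitution $y\mapsto Z(y,r)$), and the various scalings you suggest for $|r|$ ($|\xi|^{-1/2}$ vs.\ fixed) are mutually inconsistent. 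Look up the modification lemma; with it, the proof is two lines.
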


\begin{proof}
We apply Lemma 4.1 of \cite{braundasilva}, see also section 2 of \cite{asano:95}, to the vector field $L^\sharp = L_{j_0}$.
Thus we obtain a vector field $L^\sharp_1$ over an open set $\Omega_1 \subset \Omega$ such that $L^\sharp_1 Z = 0$, $L^\sharp_1 r_j = \delta _{j_0j}$ and
\begin{equation*}
    \big|L^\sharp_1 \Psi(x,r)\big| \leq Q^{k+1} m_k |r|^k,
    \eqno{
    \begin{matrix*}[r]
        x \in \Omega, & 0 < |r| < \delta, \\
        &k \in \Z_+.
    \end{matrix*}
    }
\end{equation*}
We have $\d(H \; \d r_1 \wedge \dots \wedge \widehat{\d r_{j_0}} \wedge \dots \wedge \d r_n \wedge \d Z) = (-1)^{j_0+1}(L_1^\sharp H) \d r \wedge \d Z$, for every function $H \in \mathcal{C}^1(\Omega_1)$.
The rest of the proof is completely analogous to the proof of Lemma 4.2 in \cite{braundasilva}.
\end{proof}

\begin{thm}
Let $\Omega \subset \R^d \times \R^n$ be an open neighborhood of the origin.
Let $u \in \mathcal{C}^2(\Omega)$ be a solution of the nonlinear PDE:
\begin{equation*}
    \dfrac{\del u}{\del t_j} = f_j\big(x,t,u,u_x\big),
    \eqno{ 1 \leq j \leq n,}
\end{equation*} 
where each $f_j(x,t,\zeta_0, \zeta)$ is a function of class $\Cm$ with respect to $(x,t)$ and holomorphic with respect to $(\zeta_0,\zeta) \in \C \times \C^d$.
Then:
\begin{equation} \label{eq:WF_M_u_subset_T0}
    \WF_\M(u) \subset \T^{0}(\V^u),
\end{equation}
where $\V^u$ is the $\Cl^1$-smooth involutive structure defined by the linearized operators:
\begin{equation*} %\label{eq:linearized}
    L_j^u = \dfrac{\del}{\del t_j} - \sum_{k=1}^d \dfrac{\del f_j}{\del \zeta_k}\big(x,t,u(x,t), u_x(x,t)\big)
    \dfrac{\del}{\del x_k},
    \eqno{ 1 \leq j \leq n.}
\end{equation*}
\end{thm}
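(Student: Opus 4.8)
We argue microlocally. By translation it suffices to fix a covector $(\xi_0,\tau_0)\in(\R^d\times\R^n)\setminus\{0\}$ with $(0,0;\xi_0,\tau_0)\notin\T^0(\V^u)$ and to prove that $u$ is $\Cm$-regular at $(0,0;\xi_0,\tau_0)$. Writing $b_{jk}=\del f_j/\del\zeta_k(\,\cdot\,,u,u_x)\in\Cl^1$, the symbol of $L^u_j$ at a real covector $(\xi,\tau)$ is $\sigma_j(\xi,\tau)=\tau_j-\sum_{k=1}^d b_{jk}(\cdot)\xi_k$, so $\T^0(\V^u)=\{(\,\cdot\,;\xi,\tau):\sigma_j=0,\ 1\le j\le n\}$, and being non-characteristic there is $j_0$ with $\sigma_{j_0}(0,0;\xi_0,\tau_0)\neq0$. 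We treat in detail the case $\Im\sigma_{j_0}(0,0;\xi_0,\tau_0)<0$; the remaining non-characteristic directions — the microlocally elliptic ones (where the relevant symbols are real) and those where no symbol has negative imaginary part — are disposed of by the parallel arguments recalled in \cite{braundasilva}, using also $\bar u$, which solves the conjugate system, whose linearization is $\overline{\V^u}$ with $\T^0(\overline{\V^u})=\T^0(\V^u)$ and for which $\F[\chi\bar u](X,\Xi)=\overline{\F[\chi u](X,-\Xi)}$.

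The strategy is to feed suitable approximate first integrals and an approximate solution into Lemma \ref{lem:FBI}, applied over $\R^{d+n}$ with coordinates $X=(x,t)$ to the tube-type structure generated by $\del_{r_j}+L^u_j$ ($1\le j\le n$) on a neighborhood of $0$ in $\R^{d+n}_X\times\R^n_r$: its coefficients depend only on $X$ and lie in $\Cl^1$, exactly as the lemma permits, and the coefficient vector $a_{j_0}$ of $\del_{r_{j_0}}+L^u_{j_0}$ satisfies $a_{j_0}(0)\cdot(\xi_0,\tau_0)=\sigma_{j_0}(0,0;\xi_0,\tau_0)$, which has negative imaginary part. The functions $t_j-r_j$, $1\le j\le n$, are exact first integrals with $(t_j-r_j)|_{r=0}=t_j$; it remains to build, for $1\le k\le d$, functions $Z_k$ with $Z_k|_{r=0}=x_k$ and $|(\del_{r_j}+L^u_j)Z_k|\le Q^{p+1}m_p|r|^p$, and a function $\Psi$ with $\Psi|_{r=0}=u$ and $|(\del_{r_{j_0}}+L^u_{j_0})\Psi|\le Q^{p+1}m_p|r|^p$. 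Since $\V^u$ has merely $\Cl^1$ coefficients, Theorem \ref{thm:ext} does not apply to it directly, and we lift to the characteristic system of the nonlinear equation. On a neighborhood $\widetilde\Omega$ of $\bigl(0,0,u(0,0),u_x(0,0)\bigr)$ in $\R^d\times\R^n\times\C\times\C^d$, with coordinates $(x,t,\zeta_0,\zeta)$, set
\[
\widetilde L_j=\frac{\del}{\del t_j}-\sum_{k=1}^d\frac{\del f_j}{\del\zeta_k}\,\frac{\del}{\del x_k}+\biggl(f_j-\sum_{k=1}^d\zeta_k\frac{\del f_j}{\del\zeta_k}\biggr)\frac{\del}{\del\zeta_0}+\sum_{k=1}^d\biggl(\frac{\del f_j}{\del x_k}+\zeta_k\frac{\del f_j}{\del\zeta_0}\biggr)\frac{\del}{\del\zeta_k},
\]
$1\le j\le n$. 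These pairwise commute — this is the compatibility of the overdetermined system — and their coefficients are $\Cm$ in $(x,t)$ and holomorphic, hence $\Cm$, in $(\zeta_0,\zeta)$; thus $\widetilde X:=(-\widetilde L_1,\dots,-\widetilde L_n)$ is an ordered set of commuting $\Cl^\infty$-smooth complex vector fields on $\widetilde\Omega$ with coefficients of class $\Cm$. Finally the $\Cl^1$ map $h(x,t)=\bigl(x,t,u(x,t),u_x(x,t)\bigr)$ enjoys the intertwining property
\[
L^u_j(\widetilde G\circ h)=(\widetilde L_j\widetilde G)\circ h,\qquad 1\le j\le n,
\]
for every $\widetilde G\in\Cl^1(\widetilde\Omega)$ holomorphic in $(\zeta_0,\zeta)$; one checks it by the chain rule, substituting the equation $u_{t_j}=f_j(x,t,u,u_x)$ and its first $x$-derivatives.

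Applying Corollary \ref{cor:extension-nonlinear} and Remark \ref{rmk:holomorphic-parameters} to the vector fields $\widetilde X$ with auxiliary time $r\in\R^n$ — first to the ($\Cm$) datum $x_k$, then to the datum $\zeta_0$ — yields $\Cl^\kappa$-smooth functions $\widetilde Z_k$, $\widetilde\Psi$, defined near $h(0,0)$ in the base variables and everywhere in $r$, holomorphic in $(\zeta_0,\zeta)$, with $\widetilde Z_k|_{r=0}=x_k$, $\widetilde\Psi|_{r=0}=\zeta_0$ and $|(\del_{r_j}+\widetilde L_j)\widetilde Z_k|,\ |(\del_{r_j}+\widetilde L_j)\widetilde\Psi|\le Q^{p+1}m_p|r|^p$; the whole point of working upstairs is that the iterated $\widetilde L$-derivatives entering these estimates belong to the genuinely $\Cm$ structure and are insensitive to the low regularity of $u$. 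Setting $Z_k:=\widetilde Z_k\circ(h\times\mathrm{id}_r)$ and $\Psi:=\widetilde\Psi\circ(h\times\mathrm{id}_r)$, the intertwining gives $(\del_{r_j}+L^u_j)Z_k=\bigl[(\del_{r_j}+\widetilde L_j)\widetilde Z_k\bigr]\circ(h\times\mathrm{id}_r)$ and likewise for $\Psi$, so $Z_k|_{r=0}=x_k$, $\Psi|_{r=0}=u$ and, since the estimates of Theorem \ref{thm:ext} are uniform in the base point and composition with the fixed $\Cl^1$ map $h$ only alters the constant, $|(\del_{r_j}+L^u_j)Z_k|,\ |(\del_{r_{j_0}}+L^u_{j_0})\Psi|\le Q^{p+1}m_p|r|^p$ near $0$. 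Lemma \ref{lem:FBI}, applied with the first integrals $\{Z_k\}_{1\le k\le d}\cup\{t_j-r_j\}_{1\le j\le n}$ and the function $\Psi$, then produces an open cone $\Gamma\ni(\xi_0,\tau_0)$, a cutoff $\chi$ and $A>0$ with $|\F[\chi u](X,\Xi)|\le A^{p+1}M_p/|\Xi|^p$ for $X$ near $0$ and $\Xi\in\Gamma$, that is, $u$ is $\Cm$-regular at $(0,0;\xi_0,\tau_0)$. Since this was an arbitrary point outside $\T^0(\V^u)$, \eqref{eq:WF_M_u_subset_T0} follows.

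The main obstacle is the one already singled out: $u$, and hence $\V^u$ and $h$, are only of class $\Cl^2$, $\Cl^1$, so the $\Cm$-apparatus cannot be used on $\Omega$ itself. The lift $\widetilde\V$ to the characteristic system is genuinely $\Cm$ because each $f_j$ is $\Cm$ in $(x,t)$ and holomorphic in $(\zeta_0,\zeta)$, and the descent back to $\Omega$ loses nothing thanks to the exact identity $L^u_j(\,\cdot\circ h)=(\widetilde L_j\,\cdot)\circ h$ for $(\zeta_0,\zeta)$-holomorphic arguments (whence Remark \ref{rmk:holomorphic-parameters}) and the uniformity of the estimates of Theorem \ref{thm:ext}. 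Verifying that $\widetilde\V$ is involutive (Cauchy's method of characteristics), reconciling the auxiliary variables of Corollary \ref{cor:extension-nonlinear} with the variable $r$ of Lemma \ref{lem:FBI}, and the casework over the non-characteristic covectors — including the microlocally elliptic ones — is routine and carried out as in \cite{braundasilva}; cf. also \cite{petronilho:11} and \cite{hoepfner:15}.
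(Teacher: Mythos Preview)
Your overall architecture---lift to the Hamiltonian (characteristic) vector fields, apply Corollary \ref{cor:extension-nonlinear} and Remark \ref{rmk:holomorphic-parameters} to produce holomorphic-in-$(\zeta_0,\zeta)$ approximate first integrals and an approximate solution, descend via the intertwining identity, then invoke Lemma \ref{lem:FBI}---is exactly the paper's. The difference, and the gap, is in the casework you defer. The paper does not split into ``$\Im\sigma_{j_0}<0$ / elliptic / $\Im\sigma_{j_0}>0$''; instead it uses Asano's $\theta$-rotation: view $u$ (trivially extended in $r$) as a solution of $\partial_{r_j}u=f_j^\theta(x,t,u,u_x,u_t)$ with $f_j^\theta=e^{-i\theta}(\tau_j-f_j)$, so that the linearized field $(L_{j_0}^\theta)^u$ now carries the complex coefficient $-e^{-i\theta}$ in front of $\partial_{t_{j_0}}$. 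The quantity to make negative in Lemma \ref{lem:FBI} becomes $\Im\big(-e^{-i\theta}\sigma_{j_0}(\xi_0,\tau_0)\big)$, and for any nonzero $\sigma_{j_0}$ one simply chooses $\theta$ accordingly. This covers \emph{all} non-characteristic covectors in one stroke; in particular it handles the elliptic directions where every $\sigma_j$ is real but some $\sigma_{j_0}\neq0$.

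Your proposed shortcut through $\bar u$ does not cover those directions. One has $\WF_\M(\bar u)=-\WF_\M(u)$, so proving $\bar u$ is $\Cm$-regular at $\eta$ only yields $u$ regular at $-\eta$. To get $u$ regular at $(\xi_0,\tau_0)$ via $\bar u$ you would need $\bar u$ regular at $(-\xi_0,-\tau_0)$, and applying your lemma there requires $\Im\bar\sigma_{j_0}(-\xi_0,-\tau_0)<0$, which unwinds to $\Im\sigma_{j_0}(\xi_0,\tau_0)<0$---the very condition you already had. Thus the conjugate trick is redundant and the covectors with $\Im\sigma_j\ge0$ for all $j$ (in particular the purely real ones) remain untreated. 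The remedy is precisely the $\theta$-rotation: replace your tube fields $\partial_{r_j}+L^u_j$ by $(L_j^\theta)^u$, enlarge the jet space by the variable $\tau$ (since $f_j^\theta$ depends on it), lift to the commuting Hamiltonians $H_j^\theta$, and then run your argument verbatim with initial data $x$, $t$, $\zeta_0$.
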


\begin{proof}
In this proof we follow closely the proof of the Theorem $4.1$ of \cite{asano:95}. 
We shall prove the inclusion \eqref{eq:WF_M_u_subset_T0} at the origin.
A direction $(0, \xi, \tau) \in \Omega \times \R^d \times \R^n$ belongs to $\T^0(\mathcal{V}^u)$ if and only if $\tau = a(0) \cdot \xi$, where $a(x,t) = \del_{\zeta}f(x,t,u(x,t),u_x(x,t))$.
This is equivalent to the validity of the identity
\begin{equation} \label{}
    \cos \theta \, \Im \, a_j(0) \cdot \xi + \sin \theta (\tau_j - \Re \, a_j(0) \cdot \xi) = 0
\end{equation}
for all $\theta \in [0,2\pi)$ and for all $1 \leq j \leq n$.
Thus, if $(0, \xi_0, \tau_0) \notin \T^0(\V^u)$, there exist $\theta \in [0, 2\pi)$ and $j_0$ be such that $\cos \theta \, \Im \, a_{j_0}(0) \cdot \xi_0 + \sin \theta (\tau_{j_0} - \Re \, a_{j_0}(0) \cdot \xi_0) > 0$. 
We consider $u$ as a function in $\Cl^2(\Omega \times \R^n)$ that does not depend on the $r$-variable, thus, it is a solution of the following nonlinear PDE:
\begin{equation} \label{eq:vs}
    \dfrac{\del u}{\del r_j} = f_j^\theta(x,t,u,u_x,u_t),
\end{equation}
where $f_j^\theta(x,t,\zeta_0,\zeta,\tau) = e^{-i\theta}(\tau_j - f_j(x,t,\zeta_0,\zeta))$.
Now consider the system of vector fields
\begin{equation*}
    L_j^\theta = \frac{\del}{\del r_j} - 
    \sum_{k=1}^{d} 
    \dfrac{\del f_j^\theta}{\del \zeta_k}(x,t,\zeta_0,\zeta,\tau)
    \frac{\del}{\del x_k} -
    \sum_{\ell=1}^{n} 
    \dfrac{\del f_j^\theta}{\del \tau_\ell}(x,t,\zeta_0,\zeta,\tau)
    \frac{\del}{\del t_\ell},
    \eqno{1 \leq j \leq n},
\end{equation*}
in $\mathrm{Dom} \, f \times \R^n$. % and define $w(x,t) = (u(x,t),u_x(x,t),u_t(x,t))$.
To finish the proof, it suffices to  apply Lemma \ref{lem:FBI} for the $\Cl^1$-vector fields
\begin{equation*}
    \big(L_j^\theta\big)^u = \frac{\del}{\del r_j} - 
    \sum_{k=1}^{d} 
    \dfrac{\del f_j^\theta}{\del \zeta_k}(x,t,u,u_x,u_t)
    \frac{\del}{\del x_k} -
    \sum_{\ell=1}^{n} 
    \dfrac{\del f_j^\theta}{\del \tau_\ell}(x,t,u,u_x,u_t)
    \frac{\del}{\del t_\ell},
    \eqno{1 \leq j \leq n},
\end{equation*}
and to some $\Cl^1$-approximate solution that extends $u$.
To fulfill the hypothesis of Lemma \ref{lem:FBI}, we shall use Corollary \ref{cor:extension-nonlinear} and the Remark \ref{rmk:holomorphic-parameters} to the holomorphic Hamiltonian vector fields
\begin{equation*}
    H_j^\theta = L_j^\theta + h_{j0}^\theta \frac{\del}{\del \zeta_0}
    + \sum_{k=1}^d h_{jk}^\theta \frac{\del}{\del \zeta_k} 
    + \sum_{\ell=1}^n h_{j(d+\ell)}^\theta \frac{\del}{\del \tau_\ell} 
    \eqno{1 \leq j \leq n},
\end{equation*}
where
\begin{align*}
    h_{j0}^\theta(x,t,\zeta_0,\zeta,\tau) &= f_j^\theta(x,t,\zeta_0,\zeta,\tau) - 
    \sum_{k=1}^d \zeta_k\frac{\del f_j^\theta}{\del \zeta_k}(x,t,\zeta_0,\zeta,\tau) -
    \sum_{\ell=1}^n \tau_\ell \frac{\del f_j^\theta}{\del \tau_\ell}(x,t,\zeta_0,\zeta,\tau),\\
    h_{ji}^\theta(x,t,\zeta_0,\zeta,\tau) &= \begin{dcases}
        \frac{\del f_j^\theta}{\del x_i}(x,t,\zeta_0,\zeta,\tau) + \zeta_i \frac{\del f_j^\theta}{\del \zeta_0}(x,t,\zeta_0,\zeta,\tau), &1 \leq i \leq d, \\
        \frac{\del f_j^\theta}{\del t_i}(x,t,\zeta_0,\zeta,\tau) + \tau_i \frac{\del f_j^\theta}{\del \zeta_0}(x,t,\zeta_0,\zeta,\tau), &d+1 \leq i \leq d + n,
    \end{dcases}
\end{align*}
for each $1 \leq j \leq n$, and for the initial conditions $x, t, \zeta_0$, noticing that these vector fields commute pairwise and that the identity
%\begin{equation} \label{eq:cadeia}
$(L_j^\theta)^u \Phi^u = \left(H_j^\theta \Phi\right)^u$,
%\end{equation}
holds for every $\Cl^1$-function $\Phi(x,t,r,\zeta_0,\zeta,\tau)$ that is holomorphic with respect to $(\zeta_0,\zeta,\tau)$.
%, with the notation $\Phi^u = \Phi(x,t,r,u, u_x, u_t)$,
\end{proof}

\section{Denjoy-Carleman vectors in maximally real manifolds}\label{sec:Denjoy-Carleman_vectors_maximally_real}

Let $\Sigma \subset \mathbb{C}^m$ be a $\Cl^\infty$-smooth submanifold. We say that $\Sigma$ is \textit{maximally real} if for every $p \in \Sigma$ one of the following (equivalent) conditions holds true:

\begin{itemize}
    \item 
        $\C \T_p \C^m \simeq \T_p^{(0,1)} \C^m \oplus \C \T_p \Sigma$;
    
    \item 
        The pullback map $j^\ast : \C \T_p^\ast \C^m \rightarrow \C \T_p^\ast \Sigma$, where $j$ is the inclusion map $\Sigma \hookrightarrow \C^m$, induces an isomorphism $j^\ast_{1,0} : {\T_{(1,0)}}_p \C^m \to \C \T_p^\ast \Sigma$,
    
    \item 
        The one forms $\d (z_1|_\Sigma), \dots, \d (z_m|_\Sigma)$ are linearly independent at $p$,
\end{itemize}

\noindent here we are using the notation $\T_p^{(0,1)} \C^m$ for the $(1,0)$-complex vector fields at $p$, and ${\T_{(1,0)}}_p \C^m$ for the $(1,0)$-forms at $p$. 
% In the following sections we shall establish the local set up and then prove the main result of the present section, namely the equivalence between Denjoy-Carleman regularity, almost analytic extension an the decay of the F.B.I. transform.

The image of $\mathrm{T}^\ast\Sigma$ under the isomorphism $( j_{1,0}^\ast )^{-1}$ is the \textit{real structure bundle} of $\Sigma$ and it is denoted by 
$\mathbb{R} \mathrm{T}^\prime_{\Sigma}$,
it is a real vector bundle over $\Sigma$ whose fiber dimension is equal to $m$. 

After applying a biholomorphism on $\Sigma$, we can assume that on some open neighborhood $\Omega \subset \C^m$ of the origin, the submanifold $\Sigma$ is the graph of a $\Cl^\infty$-smooth map $\varphi : U \to \R^m$, with $\varphi(0) = 0$ and $\mathrm{d} \varphi(0) = 0$, where $U \subset \R^m$ is an open neighbourhood of the origin, ensuring the following local expression
\begin{equation*}
    \Sigma \cap \Omega = 
    \{x+iy \in \Omega : y-\varphi(x) = 0\} = 
    Z(U),
\end{equation*}
where $Z = (Z_1, \dots, Z_m) = (x \mapsto x + i \varphi(x))$. We shall also assume that $|\varphi(x) - \varphi(x^\prime)| \leq C_\varphi |x - x^\prime|$, for all $x,x^\prime \in U$, where the constant $C_\varphi>0$ is as small as we want, keeping in mind that in order to diminish $C_\varphi$ one need to shrink $U$ around the origin. In the following we shall assume $C_\varphi < 1$.

In $\Sigma \cap \Omega$, the real structure bundle can be described as follows: a complex direction $\zeta \in \C^m$ belongs to $\R \T^\prime_\Sigma|_{Z(x)}$ if, and only if, $ \zeta={}^{\mathrm{t}}Z_x(x)^{-1}\xi$, for some $\xi \in \R^m$.

Since our results are local, from now on we fix the open set $\Omega$ and the map $Z(x)$.

\subsection{Almost analytic extension}

%A $\Cl^\infty$-smooth vector field 
%$
%X = \sum_{j=1}^m a_j \del/\del x_j + b_j \del/\del y_j
%$
%in $\Omega$ is tangent to $\Sigma$ if and only if
%\[
    %0 = X(y_k - \varphi_k(x)) = \sum_{j=1}^m \bigg( a_j \dfrac{\del}{\del x_j} + b_j \dfrac{\del}{\del y_j} \bigg)(y_k - \varphi_k(x)),
%\]
%for every $1 \leq k \leq m$, \textit{i.e.}
%\[
%    b_k = \sum_{j=1}^m a_j \dfrac{\del \varphi_k}{\del x_j}.
%\]
Reducing the open set $U$ if necessary we may assume that the matrix
\[
    [Z_x] = 
    \bigg[ 
        \delta_{j\ell} + i\dfrac{\del \varphi_j}{\del x_\ell} 
    \bigg]
\]
is invertible in $U$.
Set for each $1 \leq k \leq m$
\[
    Y_k = \sum_{\ell=1}^m a_{k\ell} \dfrac{\del}{\del x_\ell} + b_{k\ell} \dfrac{\del}{\del y_\ell},
\]
where $\,^t[a_{k \ell}]$ is the inverse of $[Z_x]$ and $b_{k \ell} = \sum_{j=1}^m a_{kj} \del \varphi_\ell/\del x_j$.
Thus $Y = (Y_1, \dots, Y_m)$ is a frame of vector fields in $\Omega$ that are tangent to $\Sigma$ and satisfies the relations
\[
    Y_k z_j|_{\Sigma \cap \Omega} = \delta_{jk},
\]
for all $1 \leq j, k \leq m$.
From now on we denote by $X = (X_1, \dots, X_m)$ the restriction of $Y$ to $\Sigma \cap \Omega$.
\comment{
Thus, setting for each $1 \leq k \leq m$
\[
    X_k = \sum_{\ell=1}^m a_{k\ell} \dfrac{\del}{\del x_\ell} + b_{k\ell} \dfrac{\del}{\del y_\ell},
\]
entails
\[
    \delta_{jk} 
    =   
    X_k(x_j+i\varphi_j(x)) 
    = 
    \sum_{\ell=1}^m \bigg( 
        a_{k\ell} \dfrac{\del}{\del x_\ell} + b_{k\ell} \dfrac{\del}{\del y_\ell} 
    \bigg) (x_j + i \varphi_j(x)) 
    = 
    \sum_{\ell=1}^m a_{k\ell} \bigg( 
        \delta_{j\ell} + 
        i \dfrac{\del \varphi_j}{\del x_\ell}
    \bigg),
\]
for every $1 \leq j, k \leq m$.
Therefore $\,^t[a_{k \ell}]$ is the inverse matrix of
\[
    \bigg[ 
        \delta_{j\ell} + i\dfrac{\del \varphi_j}{\del x_\ell} 
    \bigg]
\]
and $b_{k \ell} = \sum_{j=1}^m a_{kj} \del \varphi_\ell/\del x_j$.
}
\begin{thm} \label{thm:extension-delbar-special-coords}
%Let $\Sigma \subset \C^m$ be a maximally real submanifold passing through the origin, and let $X = (X_1, \dots, X_m)$ be the dual basis of $\d (z_1|_\Sigma), \dots, \d (z_m|_\Sigma)$ near the origin.
%Let $U \subset \Sigma$ be a neighbourhood of the origin and 
Let $f \in \Cl^\mathcal{M}(U,X)$ and let $\kappa \in \Z_+$.
Then there exist $\mathcal{O} \subset \mathbb{C}^m$ a neighborhood of the origin, a function $F \in \mathcal{C}^\kappa(\mathcal{O})$, and a constant $C>0$ such that
\begin{equation*}
    \begin{dcases}
        F|_{\mathcal{O}\cap \Sigma} = f|_{\mathcal{O}\cap \Sigma} \\
        \big|\bar{\partial}_z F(z)\big| \leq C^{k+1} m_k \mathrm{dist}(z,\Sigma)^k,
    \end{dcases}
    \eqno{k \in \mathbb{Z}_+, \; z \in \mathcal{O}.}
\end{equation*}
When $\M$ has moderate growth, then the extension F is $\Cl^\infty$-smooth.
\end{thm}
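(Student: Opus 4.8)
The plan is to deduce this from Theorem~\ref{thm:ext} by straightening $\Sigma$ and --- this is the crucial twist --- by feeding into that theorem the vector fields $iX_j$ rather than the $X_j$ themselves. First I would transport the data to $U$: let $X_1,\dots,X_m$ also denote the vector fields on $U$ determined by $X_jZ_k = \delta_{jk}$ (under the parametrization $Z$ these are exactly the vector fields appearing in the hypothesis). They commute, because $Z_1,\dots,Z_m$ have $\C$-linearly independent differentials, so the only vector field on $U$ annihilating every $Z_k$ is $0$, whence $[X_j,X_k]Z_\ell = 0$ forces $[X_j,X_k] = 0$; they are $\C$-linearly independent near the origin ($X_j(0) = \del/\del x_j$); and $f\in\Cm(U,X)$. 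Since $|(iX)^\alpha f| = |X^\alpha f|$, we also have $f\in\Cm\big(U,(iX_1,\dots,iX_m)\big)$. Applying Theorem~\ref{thm:ext} together with Corollary~\ref{cor:extension-maximally-real} to $f$ with the vector fields $iX_1,\dots,iX_m$ and the prescribed $\kappa$ yields $\delta,Q>0$ and a function $G\in\Cl^\kappa(V\times\R^m)$, of class $\Ci$ on $V\times(\R^m\setminus\{0\})$, with $G(u,0) = f(u)$ and
\[
\big|\mathcal{L}_jG(u,v)\big| \le Q^{k+1}m_k|v|^k, \qquad u\in V,\ 0<|v|<\delta,\ k\in\Z_+,\ 1\le j\le m,
\]
where $\mathcal{L}_j = \del/\del v_j - iX_j$; moreover $G\in\Ci$ when $\M$ has moderate growth.

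Next I would straighten $\Sigma$ via $\Phi(x,v) = Z(x) + iv = x + i\varphi(x) + iv$ on $U\times\R^m$. Its Jacobian in the real coordinates $(\Re z,\Im z)$ of $\C^m$ is $\left(\begin{smallmatrix} I & 0 \\ \varphi_x & I \end{smallmatrix}\right)$, so $\Phi$ is a $\Ci$-diffeomorphism onto an open subset of $\C^m$, with $\Phi^{-1}(z) = \big(\Re z,\ \Im z - \varphi(\Re z)\big)$. One has $\Phi(x,v)\in\Sigma$ if and only if $v = 0$, and using $|\varphi(x)-\varphi(x')|\le C_\varphi|x-x'|$ with $C_\varphi<1$ a short computation gives $\tfrac12|v| \le \mathrm{dist}\big(\Phi(x,v),\Sigma\big) \le |v|$. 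After shrinking $V$ and $\delta$ so that $\mathcal{O} := \Phi\big(V\times\{|v|<\delta\}\big)$ is contained in $\Omega$, I set $F := G\circ\Phi^{-1} \in \Cl^\kappa(\mathcal{O})$ (of class $\Ci$ in the moderate-growth case). Since $\mathcal{O}\cap\Sigma = Z(V)$ and $\Phi^{-1}(Z(x)) = (x,0)$, we obtain $F|_{\mathcal{O}\cap\Sigma} = f$.

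The core of the argument is the computation of $\bar\del_z F$. One checks directly, by comparing the action on $z_k$ and on $\bar z_k$, that $\Phi_\ast(\del/\del v_j) = \del/\del y_j = i(\del/\del z_j - \del/\del\bar z_j)$ and $\Phi_\ast X_j = Y_j$. Since $Y_j z_k \equiv \delta_{jk}$ on all of $\Omega$ --- immediate from the explicit formulas for the coefficients of $Y_j$ --- we have $Y_j = \del/\del z_j + \sum_k (Y_j\bar z_k)\,\del/\del\bar z_k$, so the holomorphic parts cancel:
\[
\Phi_\ast\mathcal{L}_j \;=\; \frac{\del}{\del y_j} - iY_j \;=\; -\,i\sum_{k=1}^m N_{jk}\,\frac{\del}{\del\bar z_k}, \qquad N_{jk} := \delta_{jk} + Y_j\bar z_k .
\]
Because $\varphi_x(0) = 0$ we get $Y_j\bar z_k(0) = \delta_{jk}$, hence $N(0) = 2I$; shrinking $\mathcal{O}$ we may assume $N$ invertible on $\mathcal{O}$. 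Inverting the above identity and applying both sides to $F = G\circ\Phi^{-1}$ gives
\[
\frac{\del F}{\del\bar z_k}(z) \;=\; i\sum_{j=1}^m (N^{-1})_{kj}(z)\,\big(\mathcal{L}_jG\big)\!\big(\Phi^{-1}(z)\big), \qquad z\in\mathcal{O}\setminus\Sigma,
\]
and combining the bound on $\mathcal{L}_jG$, the boundedness of $N^{-1}$ on $\mathcal{O}$ and $|v|\le 2\,\mathrm{dist}(z,\Sigma)$, one obtains $|\bar\del_z F(z)| \le C^{k+1}m_k\,\mathrm{dist}(z,\Sigma)^k$ for all $k\in\Z_+$ and $z\in\mathcal{O}\setminus\Sigma$, after absorbing the factors $m$, $2^k$ and $\sup_\mathcal{O}|N^{-1}|$ into $C$; the $k=1$ case shows $\bar\del_z F$ extends continuously by $0$ across $\Sigma$, so the estimate holds on all of $\mathcal{O}$. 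When $\M$ has moderate growth, $G$, and hence $F$, is $\Ci$, as already noted.

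I expect the only genuinely delicate point to be the normalization. Theorem~\ref{thm:ext} must be applied to $iX_j$, not $X_j$: only then does $\Phi_\ast(\del/\del v_j - iX_j)$ lie in the span of $\del/\del\bar z_1,\dots,\del/\del\bar z_m$, so that control of $\mathcal{L}_jG$ becomes control of $\bar\del_z F$; with the naive choice $\del/\del v_j - X_j$ the pushforward retains a nonzero $\del/\del z_j$-component (which near $\Sigma$ is comparable to $X_jf$, hence not small) and the conclusion is lost. The remaining ingredients --- the identities $\Phi_\ast X_j = Y_j$ and $Y_j z_k\equiv\delta_{jk}$ on $\Omega$, the commutativity of the $X_j$, and the invertibility of $N = I + [Y_j\bar z_k]$ near the origin --- are routine.
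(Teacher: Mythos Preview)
Your proof is correct and follows essentially the same route as the paper: the paper introduces the change of coordinates $u=x$, $v=y-\varphi(x)$ (whose inverse is exactly your $\Phi(x,v)=Z(x)+iv$), observes that in these coordinates $X_k=\sum_\ell a_{k\ell}\,\del/\del u_\ell$ and that the vector fields $L_k=\del/\del v_k-iX_k$ satisfy the hypotheses of Theorem~\ref{thm:ext} and Corollary~\ref{cor:extension-maximally-real}, and then records the matrix identity $\del/\del\bar z = A(u,v)\,L$ with $A=\big(\tfrac{2}{i}[a_{k\ell}]\big)^{-1}$. Your computation $\Phi_\ast\mathcal{L}_j=-i\sum_k N_{jk}\,\del/\del\bar z_k$ with $N_{jk}=\delta_{jk}+Y_j\bar z_k$ is the same relation expressed after pushing forward rather than pulling back, and your emphasis that one must feed $iX_j$ (not $X_j$) into Theorem~\ref{thm:ext} makes explicit what the paper leaves implicit; the distance comparison $\tfrac12|v|\le\mathrm{dist}(\Phi(x,v),\Sigma)\le|v|$ and the verification that $Y_jz_k\equiv\delta_{jk}$ off $\Sigma$ are likewise correct details that the paper does not spell out.
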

\begin{proof}
After the change of coordinates $(u,v) : \Omega \to \C^m$ given by
\[
    \begin{cases}
        u = x, \\
        v = y - \varphi(x),
    \end{cases}
\]
the vector fields $\del/\del \bar{z}_j$ and $X_k$ are expressed by
\begin{align*}
    \dfrac{\del}{\del \bar{z}_j}
    &=
    \dfrac{1}{2}
    \bigg\{
        \dfrac{\del}{\del u_j} 
        + 
        i \sum_{\ell=1}^m
        \bigg(
            \delta_{j \ell} +
            i \dfrac{\del \varphi_\ell}{\del x_j}
        \bigg) 
        \dfrac{\del}{\del v_\ell}
    \bigg\}, \\
    X_k
    &= 
    \sum_{\ell=1}^m  a_{k\ell} \dfrac{\del}{\del u_\ell}.
\end{align*}
% A conta está no \comment abaixo
\comment{
\noindent For $\del/\del \bar{z}_j$:
\[
\begin{dcases}
\dfrac{\del}{\del \bar{z}_j} 
= 
\sum_{\ell=1}^m \alpha_{j\ell} \dfrac{\del}{\del u_\ell} + \beta_{j\ell} \dfrac{\del}{\del v_\ell} \\
\alpha_{j \ell} 
= 
\dfrac{\del}{\del \bar{z}_j} u_\ell 
= 
\dfrac{\del}{\del \bar{z}_j} 
\bigg(
\dfrac{z_\ell + \bar{z}_\ell}{2} 
\bigg)
= 
\dfrac{\delta_{j \ell}}{2} \\
\beta_{j \ell} 
= 
\dfrac{\del}{\del \bar{z}_j} v_\ell 
= 
\dfrac{1}{2} 
\bigg(
\dfrac{\del}{\del x_j} + i \dfrac{\del}{\del y_j}
\bigg)
(y_\ell - \varphi_\ell(x))
= 
\dfrac{i}{2} 
\bigg(
\delta_{j \ell} 
+
i \dfrac{\del \varphi_\ell}{\del x_j}
\bigg)
\end{dcases}
\]
\[
\therefore \quad
\dfrac{\del}{\del \bar{z}_j}
=
\dfrac{1}{2}
\bigg\{
\dfrac{\del}{\del u_j} 
+ 
i \sum_{\ell=1}^m
\bigg(
    \delta_{j \ell} 
    +
    i \dfrac{\del \varphi_\ell}{\del x_j}
\bigg)
\dfrac{\del}{\del v_\ell}
\bigg\}
\]
For $X_k$:
\[
\begin{dcases}
X_k
= 
\sum_{\ell=1}^m A_{k\ell} \dfrac{\del}{\del u_\ell} + B_{k\ell} \dfrac{\del}{\del v_\ell} \\
A_{k \ell} 
= 
X_k u_\ell 
= 
\sum_{j=1}^m \bigg( 
a_{kj} \dfrac{\del}{\del x_j} + b_{kj} \dfrac{\del}{\del y_j} 
\bigg) x_\ell
= 
a_{k\ell} \\
B_{k \ell} 
= 
X_k v_\ell 
= 
\sum_{j=1}^m \bigg( 
a_{kj} \dfrac{\del}{\del x_j} + b_{kj} \dfrac{\del}{\del y_j} 
\bigg)
(y_\ell - \varphi_\ell(x))
= 
b_{k \ell} - \sum_{j=1}^m a_{kj} \dfrac{\del \varphi_\ell}{\del x_j}
= 0
\end{dcases}
\]
\[
\therefore \quad 
X_k = \sum_{\ell=1}^m a_{k\ell} \dfrac{\del}{\del u_\ell}
\]
(or, in matrix notation:
\[
[\Psi^\prime(x,y)]
=
\begin{bmatrix}
I & 0 \\
-\del \varphi/\del x & I
\end{bmatrix}
\]
\[
\bigg[
    \Psi_\ast\bigg(
    \dfrac{\del}{\del \bar{z}_j}
    \bigg)
\bigg]
=
[\Psi^\prime(x,y)]
\begin{bmatrix}
\dfrac{1}{2}e_j \\
\\
\dfrac{i}{2}e_j
\end{bmatrix}
=
\begin{bmatrix}
(1/2)e_j\\
-(1/2)[\del \varphi/\del x]e_j+(i/2)e_j
\end{bmatrix}
=
\dfrac{1}{2}\bigg[\dfrac{\del}{\del u_j}\bigg]
+
\sum_{\ell=1}^m
\bigg(
-\dfrac{1}{2}\dfrac{\del \varphi_\ell}{\del  x_j}+\frac{i}{2}\delta_{j\ell}
\bigg)
\bigg[\dfrac{\del}{\del v_\ell}\bigg]
\]
\[
\big[
    \Psi_\ast(
    X_k
    )
\big]
=
[\Psi^\prime(x,y)]
\begin{bmatrix}
a_k \\
b_k
\end{bmatrix}
=
\begin{bmatrix}
a_k\\
-[\del \varphi/\del x]a_k+b_k
\end{bmatrix}
=
\begin{bmatrix}
a_k\\
0
\end{bmatrix}
=
\sum_{\ell=1}^m
a_{k\ell}
\bigg[\dfrac{\del}{\del u_\ell}\bigg]
\]
)
}
\comment{
In matrix notation:
\[
\begin{bmatrix}
\del/\del \bar{z}_1 \\
\vdots \\
\del/\del \bar{z}_m
\end{bmatrix}
=
\dfrac{1}{2} \left\{
\begin{bmatrix}
\del/\del u_1 \\
\vdots \\
\del/\del u_m
\end{bmatrix}
+ i 
\begin{bmatrix}
1 + i (\Psi^{-1})^\ast(\del \varphi_1/\del x_1) & \cdots & i (\Psi^{-1})^\ast(\del \varphi_m/\del x_1) \\
\vdots & & \vdots \\
i (\Psi^{-1})^\ast(\del \varphi_1/\del x_m) & \cdots & 1 + i (\Psi^{-1})^\ast(\del \varphi_m/\del x_m)
\end{bmatrix}
\begin{bmatrix}
\del/\del v_1 \\
\vdots \\
\del/\del v_m
\end{bmatrix}
\right\}.
\]
}
\comment{
\begin{align*}
2
\begin{bmatrix}
a_{11} & \cdots & a_{1m} \\
\vdots & & \vdots \\
a_{m1} & \cdots & a_{mm}
\end{bmatrix}
\begin{bmatrix}
\del/\del \bar{z}_1 \\
\vdots \\
\del/\del \bar{z}_m
\end{bmatrix}
&=
\begin{bmatrix}
a_{11} & \cdots & a_{1m} \\
\vdots & & \vdots \\
a_{m1} & \cdots & a_{mm}
\end{bmatrix}
\begin{bmatrix}
\del/\del u_1 \\
\vdots \\
\del/\del u_m
\end{bmatrix} \\
&\qquad  + i 
\begin{bmatrix}
a_{11} & \cdots & a_{1m} \\
\vdots & & \vdots \\
a_{m1} & \cdots & a_{mm}
\end{bmatrix}
\begin{bmatrix}
1 + i\del \varphi_1/\del x_1 & \cdots & i\del \varphi_m/\del x_1 \\
\vdots & & \vdots \\
i\del \varphi_1/\del x_m & \cdots & 1 + i\del \varphi_m/\del x_m
\end{bmatrix}
\begin{bmatrix}
\del/\del v_1 \\
\vdots \\
\del/\del v_m
\end{bmatrix} 
\\
&= 
\begin{bmatrix}
X_1 \\
\vdots \\
X_m
\end{bmatrix}
+
i
\begin{bmatrix}
\del/\del v_1 \\
\vdots \\
\del/\del v_m
\end{bmatrix}
\end{align*}
}
The $\Cl^\infty$-smooth vector fields
$L_k = \del/\del v_k - i X_k$, $1 \leq k \leq m$, satisfy the hypothesis of Theorem \ref{thm:ext} and Corollary \ref{cor:extension-maximally-real}, and they also satisfy
\[
   \frac{\partial}{\partial \bar{z}} = A(u,v) L,
\]
where $A(u,v)$ is the matrix $\big(2/i [a_{k\ell}]\big)^{-1}$.
\comment{
\[
\begin{bmatrix}
L_1 \\
\vdots \\
L_m
\end{bmatrix}
= \dfrac{2}{i}
\begin{bmatrix}
a_{11} & \cdots & a_{1m} \\
\vdots & & \vdots \\
a_{m1} & \cdots & a_{mm}
\end{bmatrix}
\begin{bmatrix}
\del/\del \bar{z}_1 \\
\vdots \\
\del/\del \bar{z}_m
\end{bmatrix}
=
\begin{bmatrix}
\del/\del v_1 \\
\vdots \\
\del/\del v_m
\end{bmatrix}
- i
\begin{bmatrix}
X_1    \\
\vdots \\
X_m
\end{bmatrix}.
\]

Since the matrix $(a_{ij})$ is inverbile in an 
Now let us recapitulate everything that was done here in the following proposition:
}
\end{proof}

\comment{
\begin{prop} \label{prop:special-coordinate-system}
Let $\Sigma \subset \C^m$ be a maximally real submanifold passing through the origin, and let $X = (X_1, \dots, X_m)$ be the dual basis of $\d (z_1|_\Sigma), \dots, \d (z_m|_\Sigma)$ near the origin. 
Then there exists a smooth coordinate system $(u,v)$ around the origin in $\C^m \simeq \R^{2m}$ and a smooth invertible matrix $A(z)$ such that $\Sigma$ near the origin is given by $v=0$, and 
\[
   \frac{\partial}{\partial \bar{z}} = A(z) L,
\]
where $L_j =  \frac{\partial}{\partial v_j} - i X_j$, $j=1, \dots, m$.
\end{prop}
}

\subsection{A microlocal characterization}

In this section we shall relate the three main concepts of this paper: Denjoy-Carleman vectors, almost analytic extensions and the F.B.I. transform. Before doing so let us briefly recall the definition and some properties of the F.B.I. transform on maximally real submanifods of $\C^m$. 

For every $\kappa>0$ we write
\begin{equation*}
    \mathfrak{C}_\kappa\doteq\{\zeta\in\mathbb{C}^m\;:\;|\Im \, \zeta|<\kappa|\Re \, \zeta|\},
\end{equation*}
and if $\zeta\in\mathbb{C}^m$ we write $\langle\zeta\rangle^2\doteq \zeta\cdot\zeta=\zeta_1^2+\cdots+\zeta_m^2$. 
Taking the main branch of the square root we can define $\langle\zeta\rangle\doteq[\langle\zeta\rangle^2]^{1/2}$, for $\zeta\in\mathfrak{C}_1$. 

\begin{defn}
We shall say that the maximally real submanifold $\Sigma$ of $\mathbb{C}^m$ is well positioned at the origin if for every $\lambda >0 $ there are positive numbers $\kappa$ and $\kappa^\prime$, with $0<\kappa<1$, and an open neighborhood $\Omega^\prime$ of the origin on $\Sigma$ such that
\begin{equation}\label{eq:maximally-real-well-positioned}
    \begin{cases}
        |\Im \, \zeta| < \kappa \, |\Re \, \zeta|, \\
        \Re\big\{
            i\zeta\cdot(z-z^\prime)- \lambda \langle\zeta\rangle\langle z-z^\prime\rangle^2
        \big\} \leq - \kappa^\prime |\zeta| |z-z^\prime|^2,
     \end{cases}
\end{equation}
for all $z,z^\prime\in \Omega^\prime$ and all $\zeta \in \big(\mathbb{R}\mathrm{T}^\prime_{\Sigma}|_z\big) \cap \big(\mathbb{R}\mathrm{T}^\prime_{\Sigma}|_{z^\prime}\big)$.
\comment{
  We shall say that $\Sigma$ is very well positioned at the origin if, given any $0<\kappa<1$, there is an open neighborhood $\Omega$ of the origin in $\Sigma$ such that \eqref{eq:maximally-real-well-positioned} is valid.} 
\end{defn}
Without loss of generality we assume that $\Sigma$ is well positioned at the origin  with $\Omega^\prime = \Sigma \cap \Omega$ (see Proposition IX.2.2 of \cite{trevesbook}).
\comment{
We state a result that can be found in Treves' book that insures that after a biholomorphism any maximally real submanifold of $\C^m$ is well-positioned at a given point.

\begin{prop}[IX.$2.2.$ of \cite{treveslivro}]
Given any maximally real submanifold $\Sigma$ of $\mathbb{C}^m$ and any point $p$ of $\Sigma$ there is a biholomorphism $H$ of an open neighborhood $\mathcal{O}$ of $p$ in $\mathbb{C}^m$ onto an open neighborhood of the origin, with $H(p)=0$, such that $H(\Sigma\cap\mathcal{O})$ is very well positioned at $0$.
\end{prop}
}
\comment{
\noindent Now consider the map

\begin{equation}\label{eq:map-deform}
    \mathfrak{C}_\kappa\times \mathbb{C}^m\ni(\zeta,z)\mapsto\zeta+iz\langle\zeta\rangle.
\end{equation}

\noindent Since this map is positive homogeneous in $\zeta$ and it is equal to the identity map in $\mathfrak{C}_\kappa$ when $z=0$, for every $\kappa<\kappa^\prime<1$ there exists an open neighborhood of the origin $\mathcal{O}$ in $\mathbb{C}^m$ such that

\begin{equation*}
    \zeta+iz\langle\zeta\rangle\in\mathfrak{C}_{\kappa^\prime},
\end{equation*}

\noindent for all $\zeta\in\mathfrak{C}_\kappa$ and $z\in\mathcal{O}$. We shall use the notation 

\begin{equation*}
    \Delta(z,\zeta)=\det(\mathrm{Id}+i(z\odot \zeta)/\langle\zeta\rangle),
\end{equation*}

\noindent for $\zeta\in\mathfrak{C}_\kappa\setminus 0$ and $z\in\mathbb{C}^m$, where $(z\odot\zeta)=(z_i\zeta_j)_{i,j=1,\dots,m}$. This can be seen as the Jacobian of the map \eqref{eq:map-deform}. 
}
\begin{defn}
Let $u\in\mathcal{E}^\prime(\Sigma \cap \Omega)$ and $\lambda > 0$. We define the $\lambda$-F.B.I. transform of $u$ (or just F.B.I. transform if $\lambda = 1$) by

\begin{equation*}
    \mathfrak{F}^{\lambda}[u](z,\zeta) \doteq \left\langle 
        u(z^\prime), e^{i\zeta\cdot(z-z^\prime) - \lambda \langle\zeta \rangle\langle z - z^\prime \rangle^2} \Delta(\lambda(z-z^\prime),\zeta)
    \right\rangle_{\mathcal{D}^\prime},
\end{equation*}

\noindent for $z\in\mathbb{C}^m$ and $\zeta\in\mathfrak{C}_1$, where $\Delta(z,\zeta)$ is the Jacobian of the map $\zeta\mapsto\zeta+iz\langle\zeta\rangle$.

\end{defn}

\begin{rmk}
We are using the notation $\langle u,\phi\rangle_{\mathcal{D}^\prime}$ for the duality between distributions and smooth functions, and saving $\langle \cdot \rangle$ for the square root of the euclidean inner product of vectors in $\mathbb{C}^m$. 
\end{rmk}

In view of Theorem \ref{thm:extension-delbar-special-coords} and proceeding analogously as the proof of Theorem 3.4 of \cite{braun:2022} we may state the following theorem.

\begin{thm} \label{thm:vector-extension-FBI}
Let $\mathcal{M}$ be a regular sequence with moderate growth.
Let $\Sigma \subset \C^m$ be a maximally real submanifold passing through the origin, and let $X = (X_1, \dots, X_m)$ be the dual basis of $\d (z_1|_\Sigma), \dots, \d (z_m|_\Sigma)$ near the origin.
Let $u \in \mathcal{D}^\prime(\Sigma)$. 
The following are equivalent:
\begin{enumerate}
    \item There exists $V_0 \subset \Sigma$ a neighborhood of the origin such that $u|_{V_0}\in\Cl^\mathcal{M}(V_0; X)$;
    \item There exist $\mathcal{O} \subset \mathbb{C}^m$ a neighborhood of the origin, a function $F \in \mathcal{C}^\infty(\mathcal{O})$, and a constant $C>0$ such that
    \begin{equation}
        \begin{dcases}
            F|_{\mathcal{O}\cap \Sigma} = u|_{\mathcal{O}\cap \Sigma}, \\
            \big|\bar{\partial}_z F(z)\big| \leq C^{k+1}m_k \mathrm{dist}(z,\Sigma)^k,\quad \forall k \in \mathbb{Z}_+ \forall z \in \mathcal{O}.
        \end{dcases}
    \end{equation}
    \item For every $\chi \in \Cl_c^\infty(\Sigma)$, with $0 \leq \chi \leq 1$ and $\chi \equiv 1$ in some open neighborhood of the origin, there exist $V \subset \Sigma$ a neighborhood of the origin and a constant $C > 0$ such that
    \begin{equation} \label{eq:FBI-decay}
        |\mathfrak{F}[\chi u](z,\zeta)| \leq 
        \frac{C^k m_k}{|\zeta|^k}, \qquad \forall (z,\zeta) \in \R\T^\prime_{V} \setminus 0.
    \end{equation}
\end{enumerate}
\end{thm}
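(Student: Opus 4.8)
The plan is to establish the cycle of implications $(1) \Rightarrow (2) \Rightarrow (3) \Rightarrow (1)$, following the scheme of the proof of Theorem~3.4 in \cite{braun:2022} but systematically replacing the hypo-analytic Cauchy estimates used there by the Denjoy--Carleman bounds encoded in the auxiliary functions $h$, $h_1$ and $\N$ of Section~\ref{sec:Denjoy-Carleman_classes}. Throughout I would work in the special coordinate system of Section~\ref{sec:Denjoy-Carleman_vectors_maximally_real}, where $\Sigma \cap \Omega = Z(U)$, the frame $X$ is the restriction of the $Y_j$'s to $\Sigma$, and $\Sigma$ is well positioned at the origin, so that \eqref{eq:maximally-real-well-positioned} holds with $\Omega^\prime = \Sigma \cap \Omega$. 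The implication $(1) \Rightarrow (2)$ is then immediate from Theorem~\ref{thm:extension-delbar-special-coords}: by $(1)$ the distribution $u$ coincides near the origin with a function of class $\Cm(V_0;X)$, and since $\M$ has moderate growth that theorem produces a $\Cl^\infty$ extension $F$ on a neighbourhood $\mathcal{O}$ of the origin with $F|_{\mathcal{O}\cap\Sigma} = u$ and $|\bar\partial_z F(z)| \le C^{k+1} m_k\,\mathrm{dist}(z,\Sigma)^k$ for every $k \in \Z_+$ — which is exactly $(2)$.

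For $(2) \Rightarrow (3)$ I would fix $\chi$ as in the statement, arrange $\operatorname{supp}\chi \subset \mathcal{O}\cap\Sigma$ with $\chi \equiv 1$ near the origin, and write, for $z$ near the origin and $\zeta \in \R\T^\prime_\Sigma|_z$, the transform $\mathfrak{F}[\chi u](z,\zeta)$ as the integral over $\Sigma$ of $\chi F$ against the holomorphic $m$-form $e^{i\zeta\cdot(z-z^\prime) - \langle\zeta\rangle\langle z-z^\prime\rangle^2}\Delta(z-z^\prime,\zeta)\,\d z^\prime_1 \wedge \cdots \wedge \d z^\prime_m$, using $F = u$ on $\Sigma$ near the origin. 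Since that form is $\bar\partial_{z^\prime}$-closed, Stokes' theorem lets me deform the contour off $\Sigma$ along a $\zeta$-dependent family $\Sigma_t$, $0\le t\le 1$, translating $\Sigma$ in the direction $-\Re\zeta/|\Re\zeta|$ by an amount $\le s$ for a small fixed $s>0$, and thereby express $\mathfrak{F}[\chi u](z,\zeta)$ as the sum of an integral over $\Sigma_1$, an integral over $\bigcup_t\Sigma_t$ carrying $\bar\partial_{z^\prime}\chi$, and an integral over $\bigcup_t\Sigma_t$ carrying $\chi\,\bar\partial_{z^\prime}F$. On $\Sigma_1$ and on $\operatorname{supp}\bar\partial\chi$ the real part of the phase is $\le -c|\zeta|$ (by the choice of deformation and because there $z^\prime$ stays away from $z$, both sharpened by \eqref{eq:maximally-real-well-positioned}), so those two terms are $O(e^{-c|\zeta|})$ and hence bounded by $C^k m_k/|\zeta|^k$ for every $k$ once $C$ is large. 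Along the chain, at depth $\sigma$ into the complex one has $\mathrm{dist}(z^\prime,\Sigma) \lesssim \sigma$, $|\bar\partial_{z^\prime}F(z^\prime)| \le C\,h(C\sigma)$ (the content of the bound in $(2)$), and, after absorbing the Gaussian in the $\Sigma$-directions via \eqref{eq:maximally-real-well-positioned}, the phase has real part $\le -c|\zeta|\,|x-x^\prime|^2 - c|\zeta|\sigma$; integrating out the $x^\prime$-Gaussian reduces the last contribution to a one-dimensional integral of the type $\int_0^s h(C\sigma)\,e^{-c|\zeta|\sigma}\,\d\sigma$.

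The main obstacle is the purely ultradifferentiable estimate showing that $\int_0^s h(C\sigma)\,e^{-c|\zeta|\sigma}\,\d\sigma$ is, as a function of $|\zeta|$, dominated by $C^k m_k/|\zeta|^k$ for every $k$ — that is, this integral plays the role that the exponential decay plays in the hypo-analytic case, where $\bar\partial F \equiv 0$ and it is absent. This is a Laplace-type balance between the very rapid vanishing of $h$ near $0$ and the exponential weight, and I expect it to require splitting the $\sigma$-range at the critical scale (merely bounding $h(C\sigma)$ by a fixed power of $\sigma$ and integrating is too lossy), together with the log-convexity of $(m_k)$, the inequality \eqref{eq:h_1-r_j} to absorb the powers of $|\zeta|$ that arise, and the moderate growth inequality \eqref{eq:M-k-moderate-growth-M-n}. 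Granting this estimate one obtains \eqref{eq:FBI-decay}, completing $(2) \Rightarrow (3)$.

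Finally, for $(3) \Rightarrow (1)$ I would use the FBI inversion formula on $\Sigma$ (see \cite{bch} and Chapter~IX of \cite{trevesbook}), which reconstructs $\chi u$ on a neighbourhood $V_0$ of the origin from the values of $\mathfrak{F}[\chi u]$ over $\R\T^\prime_\Sigma$ up to a hypo-analytic, in particular $\Cm$, error term. Applying the tangential vector fields $X^\alpha$ to this representation, the $z$-derivatives land on the smooth amplitude and on the exponential factor, contributing at most $C^{|\alpha|}|\zeta|^{|\alpha|}$; combining this with the decay $|\mathfrak{F}[\chi u](z,\zeta)| \le C^k m_k/|\zeta|^k$ from $(3)$, integrating over $\R\T^\prime_\Sigma$ and summing the resulting series — again with the help of \eqref{eq:h_1-r_j} and \eqref{eq:M-k-moderate-growth-M-n} — yields $\sup_{V_0}|X^\alpha(\chi u)| \le C^{|\alpha|+1}M_{|\alpha|}$ for all $\alpha$. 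Since $\chi \equiv 1$ near the origin this gives $u|_{V_0}\in\Cm(V_0;X)$, which is $(1)$, and closes the cycle.
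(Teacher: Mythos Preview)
Your proposal is correct and follows the same scheme as the paper, which simply invokes Theorem~\ref{thm:extension-delbar-special-coords} for $(1)\Rightarrow(2)$ and otherwise refers to the proof of Theorem~3.4 in \cite{braun:2022}, noting that moderate growth is needed in the $(3)\Rightarrow(1)$ step. One simplification: the Laplace-type integral you flag as the ``main obstacle'' in $(2)\Rightarrow(3)$ is in fact handled by exactly the move you dismiss as ``too lossy'' --- for each fixed $k$ one bounds $h(C\sigma)\le m_k(C\sigma)^k$ and uses $\int_0^\infty \sigma^k e^{-c|\zeta|\sigma}\,\d\sigma = k!/(c|\zeta|)^{k+1}$ to get $C^{k+1}M_k/|\zeta|^{k+1}$ directly, with no splitting at a critical scale and no need for moderate growth in this implication.
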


We point out that the moderate growth condition is applied in the estimate of page 16 in the proof of Theorem 3.4 of \cite{braun:2022}.

In the following, we shall prove a microlocal version of Theorem \ref{thm:vector-extension-FBI}. 
To define the concept of Denjoy-Carleman microlocal regularity on the $\Cl^\infty$-smooth manifold $\Sigma$, we follow \cite{cordaro2016hyperfunctions}.
Let $\Gamma \subset \R^m \setminus \{0\}$ be an open cone and let $V \Subset U$ be an open neighbourhood of the origin.
Given $\delta > 0$, the \textit{wedge} with \textit{edge} $Z(V)$, \textit{directrix} $\Gamma$ and \textit{height} $\delta$ is the open set
\begin{equation*}
    \mathcal{W}_\delta (V,\Gamma) = \{ Z(x) + iv \; : \; x \in V, v \in \Gamma, \, \text{and} \; |v| < \delta \}.
\end{equation*}
A function $f \in \Cl^1(\mathcal{W}_\delta (V,\Gamma))$ has \textit{slow growth at the edge} if there are $C > 0$ and $N \in \Z_+$ such that $|f(Z(x)+iv)| \leq C/|v|^N$ for every $x \in V$ and $v \in \Gamma$, $|v| < \delta$.
We also say that $f$ is $\M$-\textit{almost analytic at the edge} if there is $C_f > 0$ such that $\big|\bar{\del}_{z} f(Z(x)+iv) \big| \leq C_f^{k+1}m_k|v|^k$, for every $k \in \Z_+$.
If an $\M$-almost analytic function $f \in \Cl^1(\mathcal{W}_\delta (V,\Gamma))$ has slow growth at the edge, then one can define its distribution boundary value by
\begin{equation*}
    \langle \mathrm{b}_\Gamma(f), \psi \rangle = \lim_{t \to 0^+} 
    \int_V f(Z(x)+it\gamma) \psi(Z(x)) \, \d Z(x),
\end{equation*}
where $\gamma \in \Gamma$ is any fixed direction and $\psi \in \Cl^\infty_c(Z(V))$.
\begin{defn}
Let $u \in \mathcal{D}^\prime(\Sigma \cap \Omega)$ and let $(p_0, \zeta_0) \in \R\T^\prime_{\Sigma} \setminus 0$, so $\zeta_0 = {}^\mathrm{t} \mathrm{d} Z(p_0)^{-1} \xi_0$, with $\xi_0 \in \mathbb{R}^m$. 
We say that $u$ is \textit{microlocally a} $\Cl^\mathcal{M}$-vector with respect to $X$ at $(p_0, \zeta_0)$ if there exist an open set $V \Subset U$, with $p_0 \in Z(V)$, $\delta > 0$, open cones $\Gamma_j$, $1 \leq j \leq \ell$, with $\xi_0 \cdot \Gamma_j < 0$, and $\M$-almost analytic functions $f_j \in \Cl^1(\mathcal{W}_\delta (V,\Gamma_j))$ with slow growth at the edge such that
\begin{align*}
    u|_{Z(V)} = \sum_{j=1}^\ell \mathrm{b}_{\Gamma_j}(f_j).
\end{align*}
The \textit{Denjoy-Carleman wave-front set of} $u$ \textit{with respect to} $X$ is the subset of $\R\T^\prime_{\Sigma}$ consisting of all the directions where $u$ is not microlocally a $\Cl^\mathcal{M}$-vector with respect to $X$, it is denoted by $\WF_{\mathcal{M}}(u; X)$.
For every cone $\mathcal{C} \subset \R^m \setminus \{0\}$ and every subset $S \subset \Sigma \cap \Omega$ we define $\R\T^\prime_{S}(\Cl) = \{ (Z(x), {}^\mathrm{t} Z_x(x)^{-1} \xi) \; : \; Z(x) \in S, \, \xi \in \Cl \}$.
\end{defn}

\begin{thm} \label{thm:microlocal-FBI}
Let $u \in \mathcal{D}^\prime(\Sigma \cap \Omega)$. Are equivalent:
\begin{enumerate}
    \item The point $(0,\xi_0)$ does not belong to $\WF_{\mathcal{M}}(u; X)$;
    \item There exists an open convex cone $\Cl \subset \mathbb{R}^m \setminus \{0\}$ containing $\xi_0$, such that for every $\chi \in \Cl_c^\infty(\Sigma \cap \Omega)$, with $0 \leq \chi \leq 1$ and $\chi \equiv 1$ in some open neighborhood of the origin, there exist $V \Subset U$ a neighborhood of the origin and a constant $C > 0$ such that
    \begin{equation} \label{eq:FBI-decay}
        |\mathfrak{F}[\chi u](z,\zeta)| \leq 
        \frac{C^{k+1} M_k}{|\zeta|^k}, \qquad \forall (z,\zeta) \in \R\T^\prime_{Z(V)}(\Cl) \setminus 0.
    \end{equation}
\end{enumerate}

\end{thm}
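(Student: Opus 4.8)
The plan is to work throughout in the special coordinates of Theorem~\ref{thm:extension-delbar-special-coords}, where $\Sigma\cap\Omega=Z(U)$, $X_k=\sum_\ell a_{k\ell}\,\del/\del u_\ell$, the vector fields $L_k=\del/\del v_k-iX_k$ satisfy the hypotheses of Theorem~\ref{thm:ext} and Corollary~\ref{cor:extension-maximally-real}, and the $\M$-almost analytic extension of Theorem~\ref{thm:extension-delbar-special-coords} is available. I would use two standing facts: the FBI kernel $e^{i\zeta\cdot(z-z')-\langle\zeta\rangle\langle z-z'\rangle^2}\Delta(z-z',\zeta)$ is holomorphic in the integration variable $z'$; and, by the well positioned hypothesis \eqref{eq:maximally-real-well-positioned}, for $z'=Z(x')+iv'$ with $|v'|$ small, $x,x'$ near the origin and $\zeta\in\R\T^\prime_{Z(x)}\setminus 0$, writing $\xi={}^{\mathrm t}Z_x(x)\zeta\in\R^m$,
\[
\Re\bigl\{i\zeta\cdot(Z(x)-z')-\langle\zeta\rangle\langle Z(x)-z'\rangle^2\bigr\}\ \leq\ \xi\cdot v'-\kappa'|\zeta|\bigl(|x-x'|^2+|v'|^2\bigr)+O\!\bigl(|v'|^2|\zeta|\bigr).
\]
Both implications are local at the origin, and I would prove them separately.

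To prove $(1)\Rightarrow(2)$ I would follow the direct half of the argument for Theorem~\ref{thm:vector-extension-FBI}, localized in the codirection. Writing $u|_{Z(V)}=\sum_j\mathrm b_{\Gamma_j}(f_j)$ with the $f_j$ $\M$-almost analytic at the edge, of slow growth and $\xi_0\cdot\Gamma_j<0$, I first shrink the $\Gamma_j$ and choose an open convex cone $\Cl\ni\xi_0$ and directions $\gamma_j\in\Gamma_j$ with $\xi\cdot\gamma_j<0$ for all $j$ and all unit $\xi\in\Cl$. Then, for a fixed $\chi$, for $Z(x)$ near the origin and $\zeta\in\R\T^\prime_{Z(x)}(\Cl)\setminus 0$, I would expand $\mathfrak F[\chi u]=\sum_j\mathfrak F[\chi\,\mathrm b_{\Gamma_j}(f_j)]$, realize each summand as a limit of integrals over $Z(V)$ of $\chi f_j$ against the FBI kernel, and deform the integration chain $\{v'=0\}$ to $\{v'=\psi(x')\gamma_j\}$ with $\psi$ a cutoff equal to a small constant $\delta'$ near the origin. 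Since the kernel is holomorphic in $z'$, Stokes' formula leaves only a $\bar\del_{z'}f_j$ contribution; the deformed chain gives $O(e^{-c|\zeta|})$ by the displayed inequality and $\xi\cdot\gamma_j<0$, while the remainder is controlled by pairing $|\bar\del f_j(Z(x')+iv')|\le C_{f_j}^{k+1}m_k|v'|^k$ with the exponential weight $e^{-c|\zeta||v'|}$ produced by $\xi\cdot\gamma_j<0$, so that $\int_0^{\delta'}r^k e^{-c|\zeta|r}\,\d r\lesssim k!\,(c|\zeta|)^{-(k+1)}$ yields the bound $C^{k+1}M_k/|\zeta|^k$ for every $k$ (the part of the chain where $x'$ stays away from the origin being exponentially small in $|\zeta|$ through the Gaussian factor $e^{-\kappa'|\zeta||x-x'|^2}$). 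Summing over $j$ gives \eqref{eq:FBI-decay}; this step is essentially routine, compare \cite{braun:2022} and \cite{bch}.

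To prove $(2)\Rightarrow(1)$ I would invert the FBI transform and localize in $\zeta$. Fix $\chi$ and open convex cones $\Cl_1\Subset\Cl_0\Subset\Cl$ with $\xi_0\in\Cl_1$, and a degree-zero cutoff $\rho\in\Ci(\R^m\setminus0)$ with $\rho\equiv1$ on $\Cl_1$ and $\mathrm{supp}\,\rho\subset\Cl_0$. Using the inversion formula of \cite{bch,baouendi1983microlocal}, let $v_1$ be the inversion integral of $\rho(\zeta)\mathfrak F[\chi u](w,\zeta)$ and $v_2=\chi u-v_1$ near the origin. By the wave-packet almost orthogonality of $\mathfrak F$ (non-stationarity of the phase off $\mathrm{supp}\,\rho$, together with \eqref{eq:FBI-decay} on $\Cl_0\subset\Cl$) I expect $|\mathfrak F[v_1](z,\zeta)|\le C^{k+1}M_k/|\zeta|^k$ for every $k$ and all $\zeta,z$ near the origin, so $v_1\in\Cl^\M(\,\cdot\,;X)$ near the origin by Theorem~\ref{thm:vector-extension-FBI}, and by Theorem~\ref{thm:extension-delbar-special-coords} it extends $\M$-almost analytically to a full neighborhood of the origin, hence is a harmless boundary value from any cone. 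It then remains to exhibit $v_2$, near the origin, as a finite sum of boundary values $\mathrm b_{\Gamma_\ell}(g_\ell)$ of $\M$-almost analytic slowly growing functions with $\xi_0\cdot\Gamma_\ell<0$. Decomposing $1-\rho$ into cutoffs supported in salient open convex cones $\Cl_\ell$ (one of which a neighborhood of $-\xi_0$), the inversion pieces are boundary values $\mathrm b_{\Gamma_\ell}(g_\ell)$ from the dual cones $\Gamma_\ell=\Cl_\ell^\ast$ with $g_\ell$ only $\Cl^1$-almost analytic; those with $\xi_0\cdot\Gamma_\ell<0$ are kept as they are (this includes the $-\xi_0$ cone, since $-\xi_0\in\mathrm{int}\,\Cl_\ell$ forces $\xi_0\cdot v<0$ on $\Gamma_\ell\setminus0$), and for the remaining ones---the $\Cl_\ell$ lying near the $\xi_0$ direction---I would combine the decay of $\mathfrak F[\chi u]=\mathfrak F[v_1]+\mathfrak F[v_2]$ on $\Cl_\ell\cap\Cl$, the automatic rapid decay of $\mathfrak F[\mathrm b_{\Gamma_\ell}(g_\ell)]$ off $\overline{\Cl_\ell}$, and an iteration on the aperture of the cones to upgrade each such $g_\ell$ to an $\M$-almost analytic function near the origin, whose boundary value is then $\Cl^\M$-regular near the origin by Theorem~\ref{thm:vector-extension-FBI} and again harmless at $(0,\xi_0)$. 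Collecting the surviving boundary values yields $u=g+\sum_{\ell:\ \xi_0\cdot\Gamma_\ell<0}\mathrm b_{\Gamma_\ell}(g_\ell)$ near the origin with $g\in\Cl^\M(\,\cdot\,;X)$, which is precisely $(0,\xi_0)\notin\WF_\M(u;X)$.

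The hard part will be this reconstruction in $(2)\Rightarrow(1)$: since no conic partition of $\R^m\setminus0$ can place every cone strictly on the negative side of $\xi_0^\perp$, the inversion pieces coming from directions near $\xi_0$ but outside $\Cl_1$ must be shown not to reintroduce a $\Cl^\M$-singularity at $(0,\xi_0)$. I expect this to rest on the wave-packet almost orthogonality of the FBI transform on maximally real submanifolds (in the spirit of \cite{bch}) together with the non-microlocal characterization of Theorem~\ref{thm:vector-extension-FBI}, carried out in the Denjoy--Carleman framework along the lines of the hyperfunction microlocal analysis of \cite{cordaro2016hyperfunctions} and the arguments of \cite{braun:2022}.
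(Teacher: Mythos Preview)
Your argument for $(1)\Rightarrow(2)$ is correct and is essentially the paper's: realize each $\mathfrak F[\chi\,\mathrm b_{\Gamma_j}(f_j)]$ as a limit of integrals over $Z(V)$, push the chain into the wedge direction $\gamma_j$ by Stokes, and pair the bound $|\bar\del f_j|\le C_{f_j}^{k+1}m_k\sigma^k$ against the exponential gain $e^{-c|\zeta|\sigma}$ coming from $\xi\cdot\gamma_j<0$ to produce $C^{k+1}M_k/|\zeta|^k$.

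For $(2)\Rightarrow(1)$ your plan is more circuitous than the paper's, and you have misdiagnosed the difficulty. The paper uses a \emph{sharp} split of the inversion integral at the boundary of $\Cl$ rather than a smooth cutoff $\rho$. For the piece $v_1^\epsilon$ over $\R\T'_{Z(\R^m)}(\Cl)$ it does not go back through $\mathfrak F[v_1]$ and Theorem~\ref{thm:vector-extension-FBI}; it differentiates under the integral, applies Fa\`a di Bruno to the Gaussian factor $e^{-\langle\zeta\rangle\langle Z(x)-z'\rangle^2}$, inserts \eqref{eq:FBI-decay} with $k$ depending on $\alpha$, and uses moderate growth of $\M$ to conclude $|X^\alpha v_1^\epsilon|\le C^{|\alpha|+1}M_{|\alpha|}$ uniformly in $\epsilon$. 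For the complementary piece $v_2^\epsilon$ it writes $\R^m\setminus\Cl=\bigcup_j\overline{\Cl_j}$ with $\Cl_j$ narrow salient cones and takes $\Gamma_j=\{v:\ v\cdot\xi_0<0\ \text{and}\ v\cdot\Cl_j>0\}$. Since $\xi_0$ is \emph{interior} to $\Cl$, every unit vector in $\R^m\setminus\Cl$ is distinct from $\xi_0/|\xi_0|$, so for $\Cl_j$ narrow enough each $\Gamma_j$ is open, convex and nonempty. Each $v_{2j}$ is then \emph{holomorphic} on the wedge $\mathcal W_\delta(\cdot,\Gamma_j)$, and a separate computation (the lemma preceding the theorem) identifies $\lim_\epsilon v_{2j}^\epsilon|_\Sigma$ with $\mathrm b_{\Gamma_j}(v_{2j})$. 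No ``upgrading'' is needed.

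Your ``hard part'' is therefore a phantom, even within your own setup: the cones $\Cl_\ell\subset\mathrm{supp}(1-\rho)\subset\R^m\setminus\Cl_1$ never contain $\xi_0$ (which is interior to $\Cl_1$), so each admits an open convex $\Gamma_\ell\subset\Cl_\ell^\ast\cap\{v\cdot\xi_0<0\}$ by the same elementary linear-algebra observation; you do not need to take the full dual $\Cl_\ell^\ast$. Two further corrections: your $g_\ell$ are not merely $\Cl^1$-almost analytic but genuinely holomorphic on their wedges, since a cutoff in $\zeta$ leaves the inversion kernel holomorphic in $z$; and your route for $v_1$ via $\mathfrak F[v_1]$ would require a Denjoy--Carleman wave-packet almost-orthogonality statement on $\Sigma$ that you have not established, whereas the direct derivative estimate is elementary and self-contained.
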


\noindent Before proving Theorem \ref{thm:microlocal-FBI} we shall need the following lemma:

\comment{
\begin{lemma}\label{lem:inequality_derivatives_gaussian}
Let $\lambda>0$ and $\alpha\in\mathbb{Z}_+^m$. Then

\begin{equation}\label{eq:derivative-gaussian}
    \del_x^\alpha e^{-\lambda|x|^2}=\sum_{l=(l^\prime,l^{\prime\prime})}\frac{\alpha!}{l!}(-\lambda)^{|l|}(2x_1,\dots,2x_m)^{l^\prime}e^{-\lambda|x|^2},
\end{equation}

\noindent where the sum is taken on the set 

\begin{equation*}
    \{l=(l_1^1,l_2^1,\dots,l_1^m,l_2^m)\,:\,l_1^j+2l_2^j=j,\, j=1,\dots,m\},
\end{equation*}

\noindent and $l^\prime=(l_1^1,\dots,l_1^m)$ and $l^{\prime\prime}=(l_2^1,\dots,l_2^m)$.

\end{lemma}
}

\begin{lemma}
    Let $g \in \mathcal{C}^\infty_c(\Sigma \cap \Omega)$ and let $\Gamma, \mathcal{C} \subset \mathbb{R}^m \setminus 0$ be open and convex cones such that if $(v, \zeta) \in \Gamma \times \mathbb{R} \mathrm{T}^\prime_{\mathrm{supp}\, g}(\mathcal{C})$ then $v \cdot \Re \,  \zeta \leq 0$. Then for every $v \in \Gamma$,
    
    \begin{equation*}
        \lim_{\lambda \to 0}
        \int_{\mathbb{R} \mathrm{T}^\prime_{z}(\mathcal{C})} \mathfrak{F}^{\frac{1}{2}}[g](z + i\lambda v, \zeta) \mathrm{d} \zeta 
        %\underset{\lambda \to 0}{\longrightarrow} 
        = 
        \int_{\mathbb{R} \mathrm{T}^\prime_{z}(\mathcal{C})} \mathfrak{F}^{\frac{1}{2}}[g](z, \zeta) \mathrm{d} \zeta,
    \end{equation*}
    
    \noindent in the $\mathcal{C}^\infty(\Sigma \cap \Omega)$ topology.
\end{lemma}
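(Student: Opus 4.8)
The plan is to deduce the limit from the dominated convergence theorem applied to the integral representation of $\mathfrak{F}^{\frac12}[g]$, and to upgrade it to the $\Cl^\infty$ topology by differentiating under the integral sign. Write $z=Z(x)$ and parametrize the fiber $\R\T^\prime_{Z(x)}(\Cl)$ by $\xi\mapsto\zeta(x,\xi)={}^{\mathrm{t}}Z_x(x)^{-1}\xi$, $\xi\in\Cl$, so that, with $J(x)=|\det {}^{\mathrm{t}}Z_x(x)^{-1}|$,
\[
    G_\lambda(Z(x))\doteq\int_{\R\T^\prime_{Z(x)}(\Cl)}\mathfrak{F}^{\frac12}[g]\big(Z(x)+i\lambda v,\zeta\big)\,\d\zeta=\int_\Cl\mathfrak{F}^{\frac12}[g]\big(Z(x)+i\lambda v,\zeta(x,\xi)\big)\,J(x)\,\d\xi,
\]
where $\mathfrak{F}^{\frac12}[g](w,\zeta)=\int_{\Sigma\cap\Omega}g(z^\prime)\,e^{i\zeta\cdot(w-z^\prime)-\frac12\langle\zeta\rangle\langle w-z^\prime\rangle^2}\,\Delta\!\left(\tfrac12(w-z^\prime),\zeta\right)\d z^\prime$. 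Fix a compact $K\subset\Sigma\cap\Omega$, the direction $v\in\Gamma$, and $\lambda_0>0$ small enough that $Z(x)+i\lambda v\in\Omega$ whenever $Z(x)\in K$ and $|\lambda|\le\lambda_0$. It suffices to show that for every multi-index $\gamma$ the family $\{\del_x^\gamma G_\lambda\}_{0\le\lambda\le\lambda_0}$ is bounded uniformly on $K$ and that $G_\lambda\to G_0$ pointwise: boundedness in $\Cl^{|\gamma|+1}_{\mathrm{loc}}$ together with pointwise convergence forces convergence in $\Cl^{|\gamma|}_{\mathrm{loc}}$, hence in $\Cl^\infty(\Sigma\cap\Omega)$.

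The double integral (over $\xi\in\Cl$ and $z^\prime=Z(x^\prime)\in\Sigma\cap\Omega$) splits according to the size of $|x-x^\prime|$. For $|x-x^\prime|\ge\rho>0$ the real part of the Gaussian factor yields $\Re\{i\zeta\cdot(w-z^\prime)-\frac12\langle\zeta\rangle\langle w-z^\prime\rangle^2\}\le-c(\rho)|\xi|$ uniformly for $|\lambda|\le\lambda_0$, so this region and all its $x$-derivatives contribute $O(e^{-c(\rho)|\xi|/2})$; in particular, for $Z(x)$ bounded away from $\mathrm{supp}\,g$ one may take $\rho=\mathrm{dist}(Z(x),\mathrm{supp}\,g)$, and there the whole integrand is exponentially small. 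It therefore remains to control the near-diagonal contribution, uniformly in $\lambda\in[0,\lambda_0]$ and for $Z(x)$ in a fixed small neighborhood of $\mathrm{supp}\,g$, the target being a bound $C_N(1+|\xi|)^{-N}$, valid for every $N$, on $\mathfrak{F}^{\frac12}[g](Z(x)+i\lambda v,\zeta(x,\xi))$ and on each of its $x$-derivatives.

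This rests on two points. First, the well-positioned inequality \eqref{eq:maximally-real-well-positioned} with coefficient $\tfrac12$ (available after shrinking $\Omega$ and the cones) gives $\Re\{i\zeta\cdot(z-z^\prime)-\tfrac12\langle\zeta\rangle\langle z-z^\prime\rangle^2\}\le-\kappa^\prime|\xi||x-x^\prime|^2$; expanding $\langle z+i\lambda v-z^\prime\rangle^2=\langle z-z^\prime\rangle^2+2i\lambda\langle z-z^\prime,v\rangle-\lambda^2\langle v\rangle^2$ and invoking the sign hypothesis $v\cdot\Re\,\zeta\le 0$ on $\Gamma\times\R\T^\prime_{\mathrm{supp}\,g}(\Cl)$ --- which, after shrinking the neighborhood of $\mathrm{supp}\,g$ and using the continuity of $x\mapsto{}^{\mathrm{t}}Z_x(x)^{-1}$, persists with an arbitrarily small loss for $\zeta\in\R\T^\prime_{Z(x)}(\Cl)$ --- one obtains, for $\lambda_0$ small,
\[
    \Re\big\{i\zeta(x,\xi)\cdot(Z(x)+i\lambda v-Z(x^\prime))-\tfrac12\langle\zeta(x,\xi)\rangle\langle Z(x)+i\lambda v-Z(x^\prime)\rangle^2\big\}\le-\tfrac{\kappa^\prime}{2}|\xi||x-x^\prime|^2+C\lambda_0|\xi|,
\]
with $C\lambda_0$ as small as we wish. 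This is precisely where the admissible deformation directions enter: the sign hypothesis keeps the $i\lambda v$-correction from contributing a term linear in $|\xi|$ of the wrong sign, so the rapid decay of the FBI transform survives the deformation. Second, since $g\in\Cl^\infty_c$, repeated integration by parts in $z^\prime$ against the vector field built from $\nabla_{z^\prime}$ of the phase --- whose gradient has modulus $\gtrsim|\xi|$ off the diagonal, with the stationary point at $z^\prime=z$ absorbed by the Gaussian --- turns the $O((1+|\xi|)^{-m/2})$ estimate produced by the Gaussian $z^\prime$-integration into $O_N((1+|\xi|)^{-N})$ for every $N$. Putting the two together, $\int_\Cl\int|\del_x^\gamma(\text{integrand})|\,\d z^\prime\,\d\xi$ is finite uniformly in $\lambda\in[0,\lambda_0]$ and in $x$; differentiation in $x$ only hits the smooth factors $\zeta(x,\xi)$, the phase, $\Delta$ and $J(x)$, producing extra powers of $|\xi|$ that are reabsorbed into the decay.

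With these uniform bounds, differentiation under the integral sign is justified, and the integrand together with all its $x$-derivatives is jointly continuous in $(\lambda,x,z^\prime,\xi)$ and converges, as $\lambda\to0^+$ and locally uniformly in $x$, to its value at $\lambda=0$; dominated convergence then gives $\del_x^\gamma G_\lambda\to\del_x^\gamma G_0$ uniformly on $K$ for every $\gamma$, which is the asserted $\Cl^\infty(\Sigma\cap\Omega)$ convergence. The genuinely delicate step is the near-diagonal uniform estimate: one must use simultaneously the well-positioned inequality to dominate the quadratic phase--Gaussian interaction, the sign hypothesis on $\Gamma$ to prevent the $i\lambda v$-deformation from amplifying the kernel, and a non-stationary-phase integration by parts in $z^\prime$ (legitimate because $g$ is smooth and compactly supported) to recover rapid decay in $\zeta$ --- all uniformly in $\lambda$ and stable under differentiation in $z$.
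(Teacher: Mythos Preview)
Your overall strategy---obtain a bound on $\mathfrak{F}^{1/2}[g](Z(x)+i\lambda v,\zeta)$ and all its $x$-derivatives that is uniform in small $\lambda$ and integrable in $\zeta$, then apply dominated convergence---is exactly what the paper does. The paper's integration-by-parts step is, however, more direct than your ``non-stationary phase against $\nabla_{z'}$ of the full phase'': since $X_j Z_k=\delta_{jk}$, one has $(1-\Delta_X)\,e^{i\zeta\cdot(z+i\lambda v-Z(x'))}=(1+\langle\zeta\rangle^2)\,e^{i\zeta\cdot(\,\cdot\,)}$, so moving $(1-\Delta_X)^N$ across (using compact support of $g$) produces the factor $(1+\langle\zeta\rangle^2)^{-N}$ with no stationary-point issue at all; the remaining $X$-derivatives land on the Gaussian, $g$, and $\Delta$, and are computed explicitly via Fa\`a di Bruno. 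Your phrase ``the stationary point at $z'=z$ absorbed by the Gaussian'' is too vague by comparison---the paper's device simply sidesteps the problem by acting only on the linear oscillatory factor.

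There is a genuine gap in your near-diagonal estimate. You write
\[
\Re\{\cdots\}\le -\tfrac{\kappa'}{2}|\xi|\,|x-x'|^2+C\lambda_0\,|\xi|,
\]
and say ``$C\lambda_0$ as small as we wish''. But after exponentiation this gives a factor $e^{C\lambda_0|\xi|}$, and no polynomial decay $(1+|\xi|)^{-N}$ from integration by parts can dominate that---your integrand is not dominated by anything integrable in $\xi$, uniformly for $\lambda\in[0,\lambda_0]$. What one actually needs is that the extra terms coming from the $i\lambda v$-deformation be absorbed \emph{entirely} into the Gaussian, i.e.\ the bound should read $\le -\tfrac{\kappa'}{2}|\xi|\,|x-x'|^2$ with no residual linear-in-$|\xi|$ term. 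This requires using the sign hypothesis (in its strict form, available after slightly shrinking the open cones over the compact $\mathrm{supp}\,g$) to make the dangerous term $-\lambda\,v\cdot\Re\,\zeta$ non-positive with a definite margin $-c^{\sharp}\lambda|v||\zeta|$, which then swallows the cross term $\lambda|\zeta||v||z-z'|$ (via Young) and the $O(\lambda^2|\zeta|)$ remainder for $\lambda|v|$ small. The paper's proof uses exactly this (it invokes a constant $c^{\sharp}$ and the restriction $\lambda|v|\le\kappa' c^{\sharp}/4$); your write-up recognizes that the sign hypothesis matters but does not carry the bookkeeping far enough to kill the $+C\lambda_0|\xi|$ term.
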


\begin{proof}
Let $v \in \Gamma$, $\lambda > 0$, $z \in \Sigma \cap \Omega$, $\zeta \in \mathbb{R} \mathrm{T}^\prime_z(\mathcal{C})$, and $N \in\mathbb{Z}_+$. Then

\begin{align*}
    &(1 + \langle \zeta \rangle^2)^{N}  \mathfrak{F}^{\frac{1}{2}}[g](z + i\lambda v,\zeta) = \\
    &= \int_{U} \left \{ (1 - \Delta_X)^N e^{i \zeta \cdot (z + i\lambda v - Z(x))}  \right \} e^{-\frac{1}{2}\langle \zeta \rangle \langle z + i\lambda v - Z(x) \rangle^2} g(Z(x)) \Delta \left(\left(\frac{z + i\lambda v - Z(x)}{2} \right), \zeta \right) \mathrm{d} Z(x) \\
    & = \int_{U} e^{i \zeta \cdot (z + i\lambda v - Z(x))}  (1 - \Delta_X)^N \left \{e^{-\frac{1}{2}\langle \zeta \rangle \langle z + i\lambda v - Z(x) \rangle^2} g(Z(x)) \Delta \left(\left(\frac{z + i\lambda v - Z(x)}{2} \right), \zeta \right), \zeta)  \right \}  \mathrm{d} Z(x) \\
    & = \sum_{k=0}^N \binom{N}{k}(-1)^k  \int_{U} e^{i \zeta \cdot (z + i\lambda v - Z(x))}  \Delta_X^k \left \{ e^{-\frac{1}{2}\langle \zeta \rangle \langle z + i\lambda v - Z(x) \rangle^2} g(Z(x)) \Delta \left(\left(\frac{z + i\lambda v - Z(x)}{2} \right), \zeta \right) \right \} \mathrm{d} Z(x) \\
    & = \sum_{k=0}^N \binom{N}{k}(-1)^k \sum_{|\beta| = k} \frac{k!}{\beta!}\int_{U} e^{i \zeta \cdot (z + i\lambda v - Z(x))}  X^{2\beta} \left \{ e^{-\frac{1}{2}\langle \zeta \rangle \langle z + i\lambda v - Z(x) \rangle^2} g(Z(x)) \Delta \left(\left(\frac{z + i\lambda v - Z(x)}{2} \right), \zeta \right) \right \} \mathrm{d} Z(x) \\
    & = \sum_{k=0}^N \sum_{|\beta| = k} \sum_{\alpha \leq 2\beta}(-1)^k \binom{N}{k}  \frac{k!}{\beta!} \binom{2\beta}{\alpha} \int_{U} e^{i \zeta \cdot (z + i\lambda v - Z(x))}  X^{\alpha} e^{-\frac{1}{2}\langle \zeta \rangle \langle z + i\lambda v - Z(x) \rangle^2} X^{2\beta - \alpha} \bigg ( g(Z(x)) \Delta \left(\left(\frac{z + i\lambda v - Z(x)}{2} \right), \zeta \right) \bigg ) \mathrm{d} Z(x) \\
    & = \sum_{k=0}^N \sum_{|\beta| = k} \sum_{\alpha \leq 2\beta} \sum_{l^1_1 + 2 l^1_2 = \alpha_1} \cdots \sum_{l^m_1 + 2l^m_2 = \alpha_m}(-1)^k \binom{N}{k}  \frac{k!}{\beta!} \binom{2\beta}{\alpha}  \frac{\alpha!}{l^1_1! \cdots l^m_1!}  (-\langle \zeta \rangle)^{l^1_1 + l^1_2 \cdots + l^m_1 + l^m_2} \cdot \\
    & \quad \cdot \int_{U} e^{i \zeta \cdot (z + i\lambda v - Z(x))-\frac{1}{2}\langle \zeta \rangle \langle z + i\lambda v - Z(x) \rangle^2} (2(Z_1(x) - z_1 - i \lambda v_1))^{l^1_1} \cdots (2(Z_m(x) - z_m - i \lambda v_m))^{l^m_1} \cdot \\
    & \quad \cdot X^{2\beta - \alpha} \bigg ( g(Z(x)) \Delta \left(\left(\frac{z + i\lambda v - Z(x)}{2} \right), \zeta \right) \bigg ) \mathrm{d} Z(x).
\end{align*}

\noindent Now in view of 

\begin{equation*}
    \int_{-\infty}^\infty t^l e^{-at^2} \mathrm{d}t \leq \text{Const} \cdot \frac{1}{a^{\frac{l+1}{2}}},
\end{equation*}

\noindent we have that, if $\lambda |v| \leq \kappa^\prime c^\sharp / 4$,

\begin{align*}
    \left| (1 + \langle \zeta \rangle^2)^{N} \mathfrak{F}^\frac{1}{2}[g](z + i\lambda v,\zeta) \right| & \leq \sum_{k=0}^N \sum_{|\beta| = k} \sum_{\alpha \leq 2\beta} \sum_{l^1_1 + 2 l^1_2 = \alpha_1} \cdots \sum_{l^m_1 + 2l^m_2 = \alpha_m} \binom{N}{k}  \frac{k!}{\beta!} \binom{2\beta}{\alpha}  \frac{\alpha!}{l^1_1! \cdots l^m_1!} | \zeta|^{l^1_1 + l^1_2+ \cdots + l^m_1 + l^m_2} \cdot \\
    & \quad \cdot \int_{U} e^{-\frac{\kappa^\prime}{2} |z - Z(x)|^2 |\zeta|} |2(Z_1(x) - z_1 - i \lambda v_1)|^{l^1_1} \cdots |2(Z_m(x) - z_m - i \lambda v_m)|^{l^m_1}C^g_{2\beta - \alpha}| \mathrm{d} Z(x)| \\
    & \leq \sum_{k=0}^N \sum_{|\beta| = k} \sum_{\alpha \leq 2\beta} \sum_{l^1_1 + 2 l^1_2 = \alpha_1} \cdots \sum_{l^m_1 + 2l^m_2 = \alpha_m} \binom{N}{k}  \frac{k!}{\beta!} \binom{2\beta}{\alpha}  \frac{\alpha!}{l^1_1! \cdots l^m_1!}  | \zeta|^{l^1_1 + l^1_2 \cdots + l^m_1 + l^m_2} \frac{\text{Const}(g,\kappa^\prime, \alpha,\beta)}{|\zeta|^{\frac{l^1_1 + \cdots l^m_1 + m}{2}}} \\
    & = \sum_{k=0}^N \sum_{|\beta| = k} \sum_{\alpha \leq 2\beta} \sum_{l^1_1 + 2 l^1_2 = \alpha_1} \cdots \sum_{l^m_1 + 2l^m_2 = \alpha_m} \binom{N}{k}  \frac{k!}{\beta!} \binom{2\beta}{\alpha}  \frac{\alpha!}{l^1_1! \cdots l^m_1!}  | \zeta|^{\frac{l^1_1 + 2l^1_2 \cdots + l^m_1 + 2l^m_2}{2} - \frac{m}{2}} \text{Const}(g,\kappa^\prime, \alpha,\beta) \\
    & = \sum_{k=0}^N \sum_{|\beta| = k} \sum_{\alpha \leq 2\beta} \sum_{l^1_1 + 2 l^1_2 = \alpha_1} \cdots \sum_{l^m_1 + 2l^m_2 = \alpha_m} \binom{N}{k}  \frac{k!}{\beta!} \binom{2\beta}{\alpha}  \frac{\alpha!}{l^1_1! \cdots l^m_1!}  | \zeta|^{\frac{\alpha}{2} - \frac{m}{2}} \text{Const}(g,\kappa^\prime, \alpha,\beta) \\
    & \leq \text{Const}(g,\kappa^\prime, N) |\zeta|^{N - \frac{m}{2}}.
\end{align*}

\noindent Then we conclude that for every $N \in \mathbb{Z}_+$ there exist $C_N>0$ such that if $0 < \lambda \leq \frac{\kappa^\prime c^\sharp}{|v|}$ then 

\begin{equation*}
    \left| \mathfrak{F}^\frac{1}{2}[g](z + i \lambda v,\zeta) \right| \leq C_N (1+|\zeta|)^{-N}.
\end{equation*}

\noindent We can write the integral on the statement of the lemma as

\begin{equation*}
    \int_{\mathcal{C}} \mathfrak{F}^\frac{1}{2}[g](Z(x) + i\lambda v, {}^\mathrm{t} Z_x(x)^{-1}\xi) \det Z_x(x)^{-1} \mathrm{d} \xi.
\end{equation*}

\noindent So to prove the lemma it is enough to show that $X^\alpha \bigg( \mathfrak{F}^\frac{1}{2}[g](Z(x) + i\lambda v, {}^\mathrm{t} Z_x(x)^{-1}\xi) \det Z_x(x)^{-1} \bigg)$ is dominated by an integrable function in $\xi$, uniformly for small $\lambda > 0$, for every $\alpha \in \mathbb{Z}_+^m$. Arguing as before one can prove that for every $\alpha \in \mathbb{Z}_+^m$ and $N \in \mathbb{Z}_+$ there exists $C >0$ such that for every $0 \leq \lambda \leq \frac{\kappa^\prime c^\sharp}{|v|}$,

\begin{equation*}
    \left| X^\alpha \bigg( \mathfrak{F}^\frac{1}{2}[g](Z(x) + i\lambda v, {}^\mathrm{t} Z_x(x)^{-1}\xi) \det Z_x(x)^{-1} \bigg) \right| \leq C (1+|\zeta|)^{-N}.
\end{equation*}

\end{proof}

\begin{proof}[Proof of Theorem \ref{thm:microlocal-FBI}]
(1.) implies (2.)

Without loss of generality we shall assume that there exist $V\Subset U$ an open neighborhood of the origin, $\delta >0$, $\Gamma \subset \mathbb{R}^m \setminus 0$ an open convex cone, $\xi_0 \cdot \Gamma < 0$, and $f \in \mathcal{C}^1(\mathcal{W}_\delta(V,\Gamma))$ an $\mathcal{M}-$almost analytic function, with slow growth, such that 
\[
    u|_{Z(V)} = \mathrm{b}_\Gamma(f).
\]
So let $\chi \in \mathcal{C}^\infty_c(V)$, $0 \leq \chi \leq 1$, with $\chi \equiv 1$ in some open neighborhood of the origin, and let $v \in \Gamma \cap \mathbb{S}^{m-1}$ be fixed. Then the F.B.I. transform of $\chi u$ can written as

\begin{equation*}
    \mathfrak{F}[\chi u](z,\zeta) = \lim_{t \to 0^+} 
    \int_V e^{i\zeta\cdot(z-Z(x^\prime)) - \langle \zeta \rangle \langle z-Z(x^\prime)\rangle^2}
    \chi(x^\prime)f(Z(x^\prime)+itv)  \Delta(z-Z(x^\prime),\zeta)\, \d Z(x^\prime).
\end{equation*}
So let $a>0$ be such that $\xi_0 \cdot v = -a |\xi_0| $ and let $0 < r < a/(12\sqrt{2})$ be such that $\chi \equiv 1$ in $B_r(0) \subset V$.
Now fix $0<t<\delta/2$. We split the integral above in two:
\begin{align*}
    \int_V e^{i\zeta\cdot(z-Z(x^\prime)) - \langle \zeta \rangle \langle z-Z(x^\prime)\rangle^2} &
    \chi(x^\prime)f(Z(x^\prime)+itv)  \Delta(z-Z(x^\prime),\zeta)\, \d Z(x^\prime) =\\
    &= \int_{B_r(0)} e^{i\zeta\cdot(z-Z(x^\prime)) - \langle \zeta \rangle \langle z-Z(x^\prime)\rangle^2}
    f(Z(x^\prime)+itv)  \Delta(z-Z(x^\prime),\zeta)\, \d Z(x^\prime)\\
    &+\int_{V\setminus B_r(0)} e^{i\zeta\cdot(z-Z(x^\prime)) - \langle \zeta \rangle \langle z-Z(x^\prime)\rangle^2}
    \chi(x^\prime)f(Z(x^\prime)+itv)  \Delta(z-Z(x^\prime),\zeta)\, \d Z(x^\prime)
\end{align*}
Now we shall deal with these two integrals separately. The exponential in the second integral can be bounded by $e^{-\frac{r^2}{4}|\zeta|}$ if $z=Z(x)$ with $|x|<r/2$, so the absolute value of the second integral is bounded by a constant times $e^{-\frac{r^2}{4}|\zeta|}$, for all $(z,\zeta) \in \mathbb{R}\mathrm{T}^\prime_{Z(B_{r/2})}$. To estimate the first integral first we shall deform the contour of integration in the following manner:
\comment{
Let $(z,\zeta) \in \left.\R\T^\prime_{\Sigma}\right|_{\widetilde{V}}$ be fixed, where $\widetilde{V}\Subset V$ is a ball centered at the origin to be chosen latter. Let $\lambda > 0$ be such that the image of the map
}
\begin{equation*}
     Z(x^\prime) \mapsto Z(x^\prime)+i\frac{\delta}{2} v.
\end{equation*}
Stokes' theorem entails
\begin{align}
    \int_{B_r(0)} &e^{i\zeta\cdot(z-Z(x^\prime)) - \langle \zeta \rangle \langle z-Z(x^\prime)\rangle^2} 
    f(Z(x^\prime)+itv)  \Delta(z-Z(x^\prime),\zeta)\, \d Z(x^\prime) = \nonumber \\
     &\qquad \int_{B_r(0)}e^{i \zeta\cdot(z -Z(x^\prime) - i\delta/2v)- \langle \zeta \rangle \langle z -Z(x^\prime) - i\delta/2 v\rangle^2}f(Z(x^\prime) + i(t+\delta/2)v)\Delta(z -Z(x^\prime) -i\delta/2 v,\zeta)\mathrm{d}Z(x^\prime) \label{integral-I}\\ 
     &\;\;\, + \int_{\{ w \in \; Z(\partial B_r(0)) + i \sigma v, \sigma\in [0,\delta/2]\}} e^{i\zeta \cdot (z-w)-\langle \zeta \rangle \langle z-w\rangle^2}f(w + itv)\Delta(z - w,\zeta)\mathrm{d}w \label{integral-II}\\ 
     &\;\;\, - \int_{\{ w \in \; Z(B_{r}(0))  + i \sigma v, \sigma\in [0,\delta/2]\}} e^{i\zeta\cdot(z-w)-\langle \zeta \rangle \langle z-w\rangle^2}\Delta(z - w,\zeta)\sum_{k=1}^m\frac{\partial}{\partial \overline{w}_k}f(w + itv)\mathrm{d}\overline{w}_k\wedge\mathrm{d}w. \label{integral-III}
\end{align}
%We now have to estimate these three integrals, and we shall refer to them as \textbf{(I)} \eqref{integral-I}, \textbf{(II)} \eqref{integral-II} and \textbf{(III)} \eqref{integral-III}. 
We start estimating the exponents. 
For $z,w \in Z(V)$, $\zeta \in \mathbb{R}\mathrm{T}^\prime_{\Sigma}|_z$ and $\sigma \in [0,\delta/2]$ we have
\begin{align*}
    \Re \big\{ i\zeta \cdot (z - w - i\sigma v) - \langle \zeta \rangle \langle z - w - i\sigma v \rangle^2 \big\} & = \Re \big\{ i \zeta \cdot (z - w) - \langle \zeta \rangle \langle z - w \rangle^2 \big\} + \sigma \Re \, \zeta \cdot v - \Re \big\{ \langle \zeta \rangle ( - \sigma^2|v|^2 - 2i\sigma (z-w)\cdot v ) \big\}\\
    & \leq - \kappa^\prime|\zeta||z-w|^2 + \sigma \Re \, \zeta \cdot v + |\zeta|(\sigma^2 + 2\sigma|z-w|).
\end{align*}

\noindent Now we shall define a cone $\mathcal{C}$. Recall that $\xi_0 \cdot v = - a|\xi_0|$, so there exists $0 < r^\prime \leq r/2$ and $\mathcal{C} \subset \mathbb{R}^m \setminus 0$ an open convex cone containing $\xi_0$ such that if $z \in Z(B_{r^\prime}(0))$ and $\zeta \in \mathbb{R}\mathrm{T}^\prime_{\Sigma}|_z(\mathcal{C})$ then
\[
    \Re \, \zeta \cdot v \leq -a/2|\zeta|.
\]
Thus, for $\zeta \in \mathbb{R}\mathrm{T}^\prime_\Sigma|_z(\mathcal{C})$ and $\delta < a/2$ we have
\begin{equation*}
    \Re \big\{ i\zeta \cdot (z - w - i\sigma v) - \langle \zeta \rangle \langle z - w - i\sigma v \rangle^2 \big\} \leq -\kappa^\prime|\zeta||z-w|^2 - |\zeta|\sigma ( a/4 -2|z - w|).
\end{equation*}

\noindent Since $C_\varphi < 1$, we have $|z - w|^2 \leq (9/2)r^2$, thus $2|z-w| < 3/\sqrt{2} r$.
The choice $r < a/(12\sqrt{2})$ implies $a/4 - 2|z-w| > a/8$, thus

\begin{equation*}
    \Re \big\{ i\zeta \cdot (z - w - i\sigma v) - \langle \zeta \rangle \langle z - w - i\sigma v \rangle^2 \big\} \leq -\kappa^\prime|\zeta||z-w|^2 - |\zeta|\frac{a\sigma}{8}.
\end{equation*}

\noindent In integral \eqref{integral-I} we have that $\sigma = \delta/2$, so we can bound it by a constant times $e^{-a\delta/16|\zeta|}$. For estimating \eqref{integral-II} we use that $|z - w|^2\geq r^2/4$, so we can bound $|\eqref{integral-II}|$ by a constant times $e^{-\frac{\kappa^\prime r^2}{4}|\zeta|}$. The exponent in integral \eqref{integral-III} is bounded by $e^{-\frac{a\sigma}{8}|\zeta|}$, so using the fact that $f$ is an $\mathcal{M}$-almost analytic function we can estimate $|\eqref{integral-III}|$ by a constant times

\begin{align*}
    \int_{0}^\infty e^{-\frac{a\sigma}{8}|\zeta|} C_f^{k+1}m_k (t+\sigma)^k|v|^k \mathrm{d}\sigma & \leq C_f^{k+1}|v|^k m_k \sum_{j=0}^k \binom{k}{j} t^{k-j} \int_0^\infty e^{-\frac{a|\zeta|}{8}\sigma}\sigma^j \mathrm{d}\sigma\\
    & = C_f^{k+1}|v|^k m_k \sum_{j=0}^k \binom{k}{j} t^{k-j} \left( \frac{8}{a|\zeta|} \right)^{j+1} j!\\
    & \leq C_f^{k+1}|v|^k \frac{8}{a|\zeta|} \left( t + \frac{8}{a|\zeta|} \right)^{k} m_k k!\\
    & \leq C_f^{k+1}|v|^k \frac{8}{a|\zeta|} \left( \delta/2 + \frac{8}{a|\zeta|} \right)^{k} M_k,
\end{align*}

\noindent for every $k \in \mathbb{Z}_+$. Summing all these estimates one obtain \eqref{eq:FBI-decay} with $V = B_{r/2}(0)$.

\comment{
We also have that $|z - w|(\kappa^\prime |z - w| - 2\sigma) \geq -\sigma^2/\kappa^\prime$. Thus 

\begin{equation*}
    \Re \{ i\zeta \cdot (z - w - i\sigma v) - \langle \zeta \rangle \langle z - w - i\sigma v \rangle^2 \} \leq -|\zeta| \sigma \big( a/4 - \sigma/\kappa^\prime \big).
\end{equation*}

\noindent Now me impose the second condition on $\delta$, we shall assume $\delta < a\kappa^\prime/4$. Summing up we have the following estimates: if $0 < \sigma \leq \delta/2$ then

\begin{equation*}
    \Re \{ i\zeta \cdot (z - w - i\sigma v) - \langle \zeta \rangle \langle z - w - i\sigma v \rangle^2 \} \leq -\sigma a/8|\zeta| ,
\end{equation*}

\noindent and if $\sigma = 0$,

\begin{equation*}
    \Re \{ i\zeta \cdot (z - w - i\sigma v) - \langle \zeta \rangle \langle z - w - i\sigma v \rangle^2 \} \leq -\kappa^\prime |z - w|^2|\zeta|.
\end{equation*}
}

\comment{
\noindent is contained in $\mathcal{O}$, where $\mathbb{E}_{V_2}$ is characteristic function of $V_2$. 
By Stokes' theorem we obtain
\begin{align*}
     \mathfrak{F}[\chi f](z,\zeta) =&
     \int_{V_1\setminus V_2}e^{i\zeta\cdot(z-Z(x))-\langle\zeta\rangle\langle z-Z(x)\rangle^2}\chi(x)f(x)\Delta(z-Z(x),\zeta) \, \d Z\\
     &+\int_{ V_2}e^{i\zeta\cdot(z-\Theta_\lambda(x))-\langle\zeta\rangle\langle z-\Theta_\lambda(x)\rangle^2}F(\Theta_\lambda(x))\Delta(z-\Theta_\lambda(x),\zeta) \, \d Z\\
     &+(-1)^{m-1}2i\int_0^\lambda\int_{V_2}e^{i\zeta\cdot(z-\Theta_\sigma(x))-\langle\zeta\rangle\langle z-\Theta_\sigma(x)\rangle^2}\del_{\bar{z}} F(\Theta_\sigma(x))\cdot\frac{\zeta}{\langle\zeta\rangle}\Delta(z-\Theta_\sigma(x),\zeta) \, \d Z \, \d\sigma\\
     &-\int_0^\lambda\int_{\del V_2}e^{i\zeta\cdot(z-\Theta_\sigma(x))-\langle\zeta\rangle\langle z-\Theta_\sigma(x)\rangle^2}F(\Theta_\sigma(x))\Delta(z-\Theta_\sigma(x),\zeta) \, \d S_\Sigma \, \d\sigma,
\end{align*}

\noindent where $\d S_\Sigma$ is the surface measure in $\{Z(x)\,:\,x\in\del V_2\}$. 
We shall estimate these four integrals separately, and we reefer to them as $(1),(2),(3)$, and $(4)$. Since the estimate for $(1)$ and $(4)$ are very similar, we will estimate them first. We start writing $\widetilde{V}=B_{r}(0)$, so $z=Z(x_0)$ for some $x_0\in B_r(0)$. 
Since $\Sigma$ is well-positioned and 
$|z-Z(x)| \geq |x_0-x|$ for every $x$,  we have
\begin{equation*}
    \Im \, \big\{
    \zeta\cdot (z-Z(x))+i\langle\zeta\rangle\langle z-Z(x)\rangle^2
    \big\} \geq c(r_2-r)^2|\zeta|,
\end{equation*}

\noindent for every $x\in V_1\setminus V_2$, where $V_2=B_{r_2}(0)$, and we are choosing $r<r_2$. Therefore $(1)$ can be estimated by $Ce^{-c(r_2-r)^2|\zeta|}$.

\noindent Now the exponent of $(4)$ can be written as

\begin{equation*}
    i \zeta \cdot (z-\Theta_\sigma(x)) -
    \langle\zeta\rangle
    \langle z-\Theta_\sigma(x) \rangle^2
    =
    i \zeta \cdot(z-Z) 
    -\langle\zeta\rangle
    \langle z-Z \rangle^2 
    \sigma^2 \langle\zeta\rangle
    -2i\sigma(z-Z) \cdot \zeta
    -\sigma \langle\zeta\rangle,
\end{equation*}

\noindent where we are writing $Z=Z(x)$. Now recall that in $(4)$ we are integrating in $\sigma$ from $0$ to $\lambda$, so $\sigma<\lambda$, and using the fact that $\Sigma$ is well-positioned and
\[
\Re \, \langle \zeta \rangle \geq \sqrt{\dfrac{1-\kappa^2}{1+\kappa^2}}|\zeta|
\]
we have
\begin{align*}
    \Im \, \big\{
        \zeta \cdot(z - \Theta_\sigma(x))+i \langle \zeta \rangle \langle z-\Theta_\sigma(x) \rangle^2 
    \big\} &= 
    \Im \, \big\{
        \zeta\cdot(z-Z(x))+i\langle\zeta\rangle\langle z-Z(x)\rangle^2
    \big\}
    + \sigma \Im \, \big\{
        i \langle \zeta \rangle (1 - \sigma) - 2(z-Z(x)) \cdot \zeta
    \big\}\\
    &\geq c|z-Z(x)|^2|\zeta| + \sigma\Re\langle\zeta\rangle(1-\sigma)-2\sigma\Im \, \{\zeta\cdot(z-Z(x))\}\\
    &\geq c|z-Z(x)|^2|\zeta|+\sigma\sqrt{\frac{1-\kappa^2}{1+\kappa^2}}(1-\sigma)|\zeta|-2\sigma|\zeta||z-Z(x)|\\
    &\geq |\zeta||z-Z(x)|\big(c|z-Z(x)|-2\lambda\big)\\
    &\geq |\zeta||z-Z(x)|\big(c(r_2-r)-2\lambda\big)\\
    &\geq |\zeta|(r_2-r)\big(c(r_2-r)-2\lambda\big),
\end{align*}

\noindent where we are choosing $\lambda$ satisfying $2\lambda<c(r_2-r)$. Therefore we can estimate $(4)$ by $Ce^{-\epsilon_1|\zeta|}$, where $\epsilon_1=(r_2-r)\big(c(r_2-r)-2\lambda\big)$. 
Before estimating $(2)$ and $(3)$, note that the exponent that appears in each of them is similar to the one that we have just estimated. In $(2)$ we have that $x\in V_2$, \textit{i.e.}, $|x|<r_2$, so the exponential have the following estimate:

\begin{equation*}
    \left|e^{i\zeta\cdot(z-\Theta_\lambda(x))-\langle\zeta\rangle\langle z-\Theta_\lambda(x)\rangle^2}\right|
    \leq e^{-|\zeta|\left\{\lambda\sqrt{\frac{1-\kappa^2}{1+\kappa^2}}(1-\lambda)+|z-Z|\left[c|z-Z|-2\lambda\right]\right\}},
\end{equation*}

\noindent where again we are writing $Z=Z(x)$. When $c|z-Z(x)|\geq 2\lambda$ we have that

\begin{equation*}
 \left|e^{i\zeta\cdot(z-\Theta_\lambda(x))-\langle\zeta\rangle\langle z-\Theta_\lambda(x)\rangle^2}\right|\leq e^{-|\zeta|\lambda\sqrt{\frac{1-\kappa^2}{1+\kappa^2}}(1-\lambda)},
\end{equation*}

\noindent and when $c|z-Z(x)|\leq 2\lambda$,

\begin{align*}
    \left|e^{i\zeta\cdot(z-\Theta_\lambda(x))-\langle\zeta\rangle\langle z-\Theta_\lambda(x)\rangle^2}\right|&\leq e^{-|\zeta|\left\{\lambda\sqrt{\frac{1-\kappa^2}{1+\kappa^2}}(1-\lambda)-2\lambda|z-Z(x)|\right\}}\\
    &\leq e^{-|\zeta|\left\{\lambda\sqrt{\frac{1-\kappa^2}{1+\kappa^2}}(1-\lambda)-\frac{4\lambda^2}{c}\right\}}\\
    &\leq e^{-|\zeta|\lambda\left\{\sqrt{\frac{1-\kappa^2}{1+\kappa^2}}(1-\lambda)-\frac{4\lambda}{c}\right\}}.
\end{align*}

\noindent Combining these two estimates we conclude that $(2)$ is bounded by $Ce^{-\lambda\epsilon_2|\zeta|}$, where $\epsilon_2=\sqrt{\frac{1-\kappa^2}{1+\kappa^2}}(1-\lambda)-\frac{4\lambda}{c}>0$, decreasing $\lambda$ if necessary.  To estimate $(3)$ we reason as before, so for each $0<\sigma\leq \lambda$ we have that if $|z-Z(x)|\geq 2\sigma/c$ then

\begin{equation*}
    \left|e^{i\zeta\cdot(z-\Theta_\sigma(x))-\langle\zeta\rangle\langle z-\Theta_\sigma(x)\rangle^2}\right|\leq e^{-|\zeta|\sigma\sqrt{\frac{1-\kappa^2}{1+\kappa^2}}(1-\sigma)},
 \end{equation*}
 
 \noindent and if $|z-Z(x)| \leq 2\sigma/c$,
 
\begin{align*}
    \left|e^{i\zeta\cdot(z-\Theta_\sigma(x))-\langle\zeta\rangle\langle z-\Theta_\sigma(x)\rangle^2}\right|&\leq e^{-|\zeta|\left\{\sigma\sqrt{\frac{1-\kappa^2}{1+\kappa^2}}(1-\sigma)-2\sigma|z-Z(x)|\right\}}\\
    &\leq e^{-|\zeta|\left\{\sigma\sqrt{\frac{1-\kappa^2}{1+\kappa^2}}(1-\sigma)-\frac{4\sigma^2}{1-\kappa}\right\}}\\
    &\leq e^{-|\zeta|\sigma\left\{\sqrt{\frac{1-\kappa^2}{1+\kappa^2}}(1-\sigma)-\frac{4\sigma}{1-\kappa}\right\}},
\end{align*}

\noindent and since $\sqrt{\frac{1-\kappa^2}{1+\kappa^2}}(1-\sigma)-\frac{4\sigma}{c}\geq \epsilon_2$, for $\sigma<\lambda$, we have that

\begin{equation*}
    \left|e^{i\zeta\cdot(z-\Theta_\sigma(x))-\langle\zeta\rangle\langle z-\Theta_\sigma(x)\rangle^2}\right|\leq e^{-\sigma\epsilon_2|\zeta|},
\end{equation*}

\noindent for every $x\in V_1$. So for every $k>0$ we can estimate the integral $(4)$ by

\begin{align*}
     \bigg(\int_0^\lambda e^{-\sigma\epsilon_2|\zeta|} \sup_{(x)\in V_0}\left|\del_{\bar{z}}F(\Theta_\sigma(x)\Delta(z-\Theta_\sigma(x),\zeta)\right|\d\sigma\bigg)2\left|\frac{|\zeta|}{\langle\zeta\rangle}\right|\left|\int_{V_1}\left|\d Z(x)\right|\right| & \leq C^{k+1}m_k\int_0^\infty e^{-\sigma\epsilon_2|\zeta|}\left|\frac{\sigma|\zeta|}{\langle\zeta\rangle}\right|^k\d\sigma\\
    &\leq C^{k+1}m_k\int_0^\infty e^{-y}\left(\frac{y}{\epsilon_2|\zeta|}\right)^k\frac{1}{\epsilon_2|\zeta|}\d y\\
    &\leq C^{k+1}\frac{M_k}{(\epsilon_2|\zeta|)^{k+1}}.
\end{align*}

\noindent \textcolor{red}{
    Since the constant $C>0$ does not depend on $k$, and the above estimate holds for every $k>0$, we have that $(4)$ is bounded by $Ce^{-\epsilon_3|\zeta|^{\frac{1}{s}}}$, for some constants $C, \epsilon_3>0$.
} 
Summing up we have obtained the required estimate \eqref{eq:FBI-decay}, with $\widetilde{V}=B_r(0)$, where $r>0$ is any positive number less than $r_2$, the radius of $V_2$. 
}

\vspace{0.2cm}

(2.) implies (1.)\\

Let $\mathcal{C}\subset \mathbb{R}^m\setminus 0$, $\chi \in \mathcal{C}^\infty_c(\Sigma\cap\Omega)$ and $V\Subset U$ be as in (2.). In order to use an exact inversion formula for the F.B.I. transform we shall extend the function $\varphi$ to the whole $\mathbb{R}^m$ in an appropriate manner. We shall replace $\varphi(x)$ by $\Psi(x)\varphi(x)$, where $\Psi \in \mathcal{C}_c^\infty(B_{r}(0))$, $\Psi \equiv 1$ in $B_{r/2}(0)$, $B_r(0) \Subset U$, and $\| \Psi^\prime \| \leq \text{Const} \cdot r^{-1}$, for some small $r$ so the image of $\mathbb{R}^m \ni x \mapsto x + i \Psi(x)\varphi(x)$ is (globally) well-positioned. From now on we shall write $\varphi$ instead of $\Psi \varphi$. Without loss of generality we can assume that the support of $\chi$ and $V$ are contained in $Z(B_{r/2}(0))$, and we shall assume that $V=B_{r^\prime}(0)$. We can then use the following inversion formula (see Lemma IX.4.1 of \cite{trevesbook} and Theorem 3.3 of \cite{braun:2022}):

\begin{equation} \label{eq:inversion_formula_FBI}
 \chi(x)u(x) = \lim_{\epsilon \to 0^+} \frac{1}{(2 \pi^3)^{\frac{m}{2}}}\iint_{\mathbb{R}\mathrm{T}^\prime_{Z(\mathbb{R}^m)}} e^{i \zeta \cdot (Z(x) - z^\prime) -\langle \zeta \rangle \langle Z(x) - z^\prime \rangle^2 - \epsilon \langle \zeta \rangle^2} \mathfrak{F}[\chi u](z^\prime,\zeta)\langle \zeta \rangle^{\frac{m}{2}}\mathrm{d}z^\prime\mathrm{d}\zeta. 
\end{equation}

\noindent For every $\epsilon>0$ we set

\begin{equation*}
    v_1^\epsilon(x) = \frac{1}{(2 \pi^3)^{\frac{m}{2}}}\iint_{\mathbb{R}\mathrm{T}^\prime_{Z(\mathbb{R}^m)}(\mathcal{C})} e^{i \zeta \cdot (Z(x) - z^\prime) -\langle \zeta \rangle \langle Z(x) - z^\prime \rangle^2 - \epsilon \langle \zeta \rangle^2} \mathfrak{F}[\chi u](z^\prime,\zeta)\langle \zeta \rangle^{\frac{m}{2}}\mathrm{d}z^\prime\mathrm{d}\zeta,
\end{equation*}

\noindent and

\begin{equation*}
    v_2^\epsilon(z) = \frac{1}{(2 \pi^3)^{\frac{m}{2}}}\iint_{\mathbb{R}\mathrm{T}^\prime_{Z(\mathbb{R}^m)}(\mathbb{R}^m \setminus \mathcal{C})} e^{i \zeta \cdot (z - z^\prime) -\langle \zeta \rangle \langle z - z^\prime \rangle^2 - \epsilon \langle \zeta \rangle^2} \mathfrak{F}[\chi u](z^\prime,\zeta)\langle \zeta \rangle^{\frac{m}{2}}\mathrm{d}z^\prime\mathrm{d}\zeta.
\end{equation*}

\noindent We shall prove that $v_1^\epsilon(x)$ converges to an $\mathcal{C}^{\mathcal{M}}(V_0; \mathrm{X})$, for some $V_0 \subset V$ open neighborhood of the origin, and that $v_2^\epsilon(z)$ converges to a sum of holomorphic function defined on wedges. We start with $v_1^\epsilon$. Let $|x| < r^\prime/2$. If $(z^\prime,\zeta) \in \mathbb{R}\mathrm{T}^\prime_{Z(\mathbb{R}^m \setminus V)}(\mathcal{C})$ then 

\begin{equation*}
    \left| e^{i \zeta \cdot (z - z^\prime) -\langle \zeta \rangle \langle z - z^\prime \rangle^2 - \epsilon \langle \zeta \rangle^2} \mathfrak{F}[\chi u](z^\prime,\zeta)\langle \zeta \rangle^{\frac{m}{2}} \right| \leq \text{Const}\cdot e^{-\epsilon_1|\zeta| - \frac{\kappa^\prime|\zeta|}{2}|z-z^\prime|^2},
\end{equation*}

\noindent for some $\epsilon_1>0$. Now if $(z^\prime,\zeta) \in \mathbb{R}\mathrm{T}^\prime_{Z(V)}(\mathcal{C})$ then

\begin{equation*}
    \left| e^{i \zeta \cdot (z - z^\prime) -\langle \zeta \rangle \langle z - z^\prime \rangle^2 - \epsilon \langle \zeta \rangle^2} \mathfrak{F}[\chi u](z^\prime,\zeta)\langle \zeta \rangle^{\frac{m}{2}} \right| \leq \text{Const}^{k+1}\frac{M_k}{|\zeta|^k} e^{- \kappa^\prime|\zeta||z-z^\prime|^2} ,
\end{equation*}

\noindent for all $k \in \mathbb{Z}_+$. Combining these two estimates we get that

\begin{equation*}
    \left| e^{i \zeta \cdot (z - z^\prime) -\langle \zeta \rangle \langle z - z^\prime \rangle^2 - \epsilon \langle \zeta \rangle^2} \mathfrak{F}[\chi u](z^\prime,\zeta)\langle \zeta \rangle^{\frac{m}{2}} \right| \leq C_1^{k+1}\frac{M_k}{|\zeta|^k}e^{- \frac{\kappa^\prime|\zeta|}{2}|z-z^\prime|^2},
\end{equation*}
for all $(z^\prime,\zeta) \in \mathbb{R}\mathrm{T}^\prime_{Z(\mathbb{R}^m)}(\mathcal{C})$ and $k \in \mathbb{Z}_+$, where $C_1>0$. Thus we have that

\begin{align*}
    \mathrm{X}^\alpha v_1^\epsilon(x)&= \sum_{\beta\leq\alpha}\binom{\alpha}{\beta}\iint_{\mathbb{R}\mathrm{T}^\prime_{Z(\mathbb{R}^m)(\mathcal{C})}}\mathrm{X}^{\alpha-\beta} e^{i\zeta\cdot(Z(x)-z^\prime)}\mathrm{X}^\beta e^{-\langle\zeta\rangle\langle Z(x)-z^\prime\rangle^2}\cdot\\
    &\cdot e^{-\epsilon\langle\zeta\rangle^2}\mathfrak{F}[\chi u](z^\prime,\zeta)\langle\zeta\rangle^\frac{m}{2}\d\zeta\d Z^\prime\\
    &=\sum_{\beta\leq\alpha}\binom{\alpha}{\beta}\sum_{l^1_1+2l^1_2=\beta_1}\cdots\sum_{l^m_1+2\l^m_2=\beta_m}\frac{\beta!}{l^1_1!l^1_2!\cdots l^m_1!l^m_2!}\cdot\\
    &\cdot \iint_{\mathbb{R}\mathrm{T}^\prime_{Z(\mathbb{R}^m)(\mathcal{C})}} e^{i\zeta\cdot(Z(x)-z^\prime)-\langle\zeta\rangle\langle Z(x)-z^\prime\rangle^2-\epsilon\langle\zeta\rangle^2}\mathfrak{F}[\chi u](z^\prime,\zeta)\langle\zeta\rangle^\frac{m}{2}\cdot\\
    &\cdot(-\langle\zeta\rangle)^{l^1_1+l^1_2+\cdots+ l^m_1+l^m_2}(i\zeta)^{\alpha-\beta}(2(Z_1(x,t)-z^\prime_1))^{l^1_1}\cdots\\
    &\cdots(2(Z_m(x)-z^\prime_m))^{l^m_1}\d\zeta\d Z^\prime.\\
\end{align*}

\noindent Therefore if $|x| < r^\prime/2$,

\begin{align*}
   \left| \mathrm{X}^\alpha v_1^\epsilon(x)\right|&\leq \sum_{\beta\leq\alpha}\binom{\alpha}{\beta}\sum_{l=(l^\prime,l^{\prime\prime})}\frac{\beta!}{l!}\iint_{\mathbb{R}\mathrm{T}^\prime_{Z(\mathbb{R}^m)(\mathcal{C})}}\left| e^{i \zeta \cdot (z - z^\prime) -\langle \zeta \rangle \langle z - z^\prime \rangle^2 - \epsilon \langle \zeta \rangle^2} \mathfrak{F}[\chi u](z^\prime,\zeta)\langle \zeta \rangle^{\frac{m}{2}} \right| \cdot \\
   &\cdot |\zeta|^{|\alpha-\beta|+l^1_1+l^1_2+\cdots+ l^m_1+l^m_2}|\d\zeta\d Z^\prime|\\
   &\leq \text{Const}^{|\alpha|+1}\sum_{\beta\leq\alpha}\binom{\alpha}{\beta}\sum_{l=(l^\prime,l^{\prime\prime})}\frac{\beta!}{l!}\iint_{\mathbb{R}\mathrm{T}^\prime_{Z(\mathbb{R}^m)(\mathcal{C})}}C_1^{k+1}e^{- \frac{\kappa^\prime|\zeta|}{2}|z-z^\prime|^2}\frac{M_k}{|\zeta|^k}|\zeta|^{|\alpha-\beta|+l^1_1+l^1_2+\cdots+ l^m_1+l^m_2}|\d\zeta\d Z^\prime|\\
   \end{align*}
   
   \noindent so choosing $k=|\alpha-\beta|+l^1_1+l^1_2+\cdots+ l^m_1+l^m_2+\kappa$, where $\kappa$ is any integer bigger than $3/2m+1$, we obtain
   
   \begin{align*}
       \left| \mathrm{X}^\alpha v_1^\epsilon(x)\right|&\leq  C_2^{|\alpha|+1}\sum_{\beta\leq\alpha}\binom{\alpha}{\beta}\sum_{l^1_1+2l^1_2=\beta_1}\cdots\sum_{l^m_1+2l^m_2=\beta_m}\frac{\beta!}{l^1_1!l^1_2!\cdots l^m_1!l^m_2!}M_{\kappa}M_{|\alpha|-|\beta|}M_{l^1_1+l^1_2}\cdots M_{l^m_1+l^m_2}\\
       &\leq C_3^{|\alpha|+1}\sum_{\beta\leq\alpha}\binom{\alpha}{\beta}\sum_{l^1_1+2l^1_2=\beta_1}\cdots\sum_{l^m_1+2l^m_2=\beta_m}\frac{\beta!}{l^1_1!(2l^1_2)!\cdots l^m_1!(2l^m_2)!}M_{|\alpha|-|\beta|}M_{l^1_1+l^1_2}\frac{(2l^1_2)!}{l^1_2!}\cdots M_{l^m_1+l^m_2}\frac{(2l^m_2)!}{l^m_2!}\\
       &\leq  C_4^{|\alpha|+1}\sum_{\beta\leq\alpha}\binom{\alpha}{\beta}\sum_{l^1_1+2l^1_2=\beta_1}\cdots\sum_{l^m_1+2l^m_2=\beta_m}\frac{\beta!}{l^1_1!(2l^1_2)!\cdots l^m_1!(2l^m_2)!}M_{|\alpha|-|\beta|}M_{l^1_1+l^1_2}l^1_2!\cdots M_{l^m_1+l^m_2} l^m_2!\\
   &\leq C_4^{|\alpha|+1}\sum_{\beta\leq\alpha}\binom{\alpha}{\beta}\sum_{l^1_1+2l^1_2=\beta_1}\cdots\sum_{l^m_1+2l^m_2=\beta_m}\frac{\beta!}{l^1_1!(2l^1_2)!\cdots l^m_1!(2l^m_2)!}M_{|\alpha|-|\beta|}M_{l^1_1+l^1_2}M_{l^1_2}\cdots M_{l^m_1+l^m_2} M_{l^m_2}\\
   &\leq  C_5^{|\alpha|+1}\sum_{\beta\leq\alpha}\binom{\alpha}{\beta}\sum_{l^1_1+2l^1_2=\beta_1}\cdots\sum_{l^m_1+2l^m_2=\beta_m}\frac{\beta!}{l^1_1!(2l^1_2)!\cdots l^m_1!(2l^m_2)!}M_{|\alpha|-|\beta|}M_{l^1_1+2l^1_2}\cdots M_{l^m_1+2l^m_2}\\
   &\leq  C_6^{|\alpha|+1}\sum_{\beta\leq\alpha}\binom{\alpha}{\beta}\sum_{l^1_1+2l^1_2=\beta_1}\cdots\sum_{l^m_1+2l^m_2=\beta_m}\frac{\beta!}{l^1_1!(2l^1_2)!\cdots l^m_1!(2l^m_2)!}M_{|\alpha|-|\beta|}M_{|\beta|}\\
   &\leq  C_6^{|\alpha|+1}M_{|\alpha|}\sum_{\beta\leq\alpha}\binom{\alpha}{\beta}\sum_{l^1_1+2l^1_2=\beta_1}\cdots\sum_{l^m_1+2l^m_2=\beta_m}\frac{\beta!}{l^1_1!(2l^1_2)!\cdots l^m_1!(2l^m_2)!}\\
   &\leq  C_6^{|\alpha|+1}M_{|\alpha|}\sum_{\beta\leq\alpha}\binom{\alpha}{\beta}\sum_{l^1_1+l^1_2=\beta_1}\cdots\sum_{l^m_1+l^m_2=\beta_m}\frac{\beta!}{l^1_1!l^1_2!\cdots l^m_1!l^m_2!}\\
   &\leq  C_7^{|\alpha|+1}M_{|\alpha|},\\
   \end{align*}
   
   \noindent where the constant $C_4$ does not depend on $\epsilon$ (see Lemma $4.2$ of \cite{bierstone} for estimating this binomials). 
   Now we shall deal with $v_2^\epsilon(z)$. 
   As usual we start writing $\mathbb{R}^m \setminus \mathcal{C} = \bigcup_{j=1}^N \overline{\mathcal{C}_j}$, where the $\mathcal{C}_j \subset \mathbb{R}^m \setminus 0$ are open convex cones, pair-wise disjoints, and such that the sets
   
   \begin{equation*}
       \Gamma_j \doteq \{ v \in \mathbb{R}^m\setminus 0 \; : \; v \cdot \xi_0 < 0 \text{ and } v \cdot \Cl_j > 0 \},
   \end{equation*}
   
   \noindent are open, convex, cones. Also we can assume, shrinking if necessary the cones $\Gamma_j$, %and $r$ (recall that $\mathrm{supp}\, \Psi \subset B_r(0)$) 
   that there exist $c^\sharp > 0$ and $0 < \tilde{r} < r^\prime$ such that for every $j = 1, \dots, N$, and $(v,\zeta) \in \Gamma_j \times \mathbb{R} \mathrm{T}^\prime_{Z(B_{\tilde{r}}(0))}(\mathcal{C}_j)$ then
    
\comment{
    \color{red}
   \begin{equation*}
       v \cdot \xi > 2c^\sharp |v||\xi|,\quad \forall (v,\xi) \in \Gamma_j \times \mathcal{C}_j   \end{equation*}
\color{black}
}

   \begin{equation*}
       v \cdot \Re \, \zeta \geq c^\sharp |v||\zeta|.
   \end{equation*}
   
   \noindent So for every $j = 1, \dots, N$, we set
   
   \begin{equation*}
       v_{2j}^\epsilon(z) = \frac{1}{(2 \pi^3)^{\frac{m}{2}}}\iint_{\mathbb{R}\mathrm{T}^\prime_{Z(\mathbb{R}^m)}(\mathcal{C}_j)} e^{i \zeta \cdot (z - z^\prime) -\langle \zeta \rangle \langle z - z^\prime \rangle^2 - \epsilon \langle \zeta \rangle^2} \mathfrak{F}[\chi u](z^\prime,\zeta)\langle \zeta \rangle^{\frac{m}{2}}\mathrm{d}z^\prime\mathrm{d}\zeta.
   \end{equation*}
   
   \noindent We claim that there exist an $\delta>0$ such that $v_{2j}^\epsilon(z)$ is holomorphic and uniformly bounded on the compact sets of $\mathcal{W}_\delta(V,\Gamma_j)$. Now writing $z = Z(x) + i v$, with $|v| \leq \delta0,$ and $z^\prime = Z(x^\prime)$ we have that if $|x^\prime| < \tilde{r}$,
   
   \begin{align*}
       \left| e^{i \zeta \cdot (z - z^\prime) - \langle \zeta \rangle \langle z - z^\prime \rangle^2} \right| & = \left| e^{i \zeta \cdot (Z(x) - z^\prime) - \langle \zeta \rangle \langle Z(x) - z^\prime \rangle^2} e^{-  \zeta \cdot v - \langle \zeta \rangle ( 2i(Z(x) - z^\prime) \cdot v - |v|^2)} \right|\\
       & \leq e^{-\kappa^\prime |\zeta||Z(x) - z^\prime|^2 - v \cdot \Re \zeta + |\zeta|(2|Z(x)-z^\prime||v| + |v|^2)}\\
       & \leq e^{-\kappa^\prime |\zeta||Z(x) - z^\prime|^2 - c^\sharp |v||\zeta| + |\zeta|(2|Z(x)-z^\prime||v| + |v|^2)} \\
       & = e^{-|\zeta||Z(x) - z^\prime|(\kappa^\prime |Z(x) - z^\prime| - 2|v|) - |v||\zeta| (c^\sharp - |v|) }\\
       &  \leq e^{|\zeta|\frac{|v|^2}{\kappa^\prime} - |\zeta| \frac{c^\sharp |v|}{2}}\\
       &  = e^{-|\zeta||v| ( \frac{c^\sharp}{2} - \frac{|v|}{\kappa^\prime}) }\\
       & \leq e^{-\frac{c^\sharp}{4}|v||\zeta|},
  \end{align*}
  
  \noindent assuming that $\delta < \frac{\kappa^\prime c^\sharp}{4}$. Now if $|x^\prime| \geq \tilde{r}$ and $|x| < \tilde{r}/2$, 
  
  \begin{align*}
      \left|  e^{i \zeta \cdot (z - z^\prime) - \langle \zeta \rangle \langle z - z^\prime \rangle^2} \right| & \leq e^{-\kappa^\prime |\zeta||Z(x) - z^\prime|^2 - v \cdot \Re \zeta + |\zeta|(2|Z(x)-z^\prime||v| + |v|^2)}\\
      & \leq e^{-\kappa^\prime \frac{\tilde{r}^2}{8}|\zeta| -\frac{\kappa^\prime}{2}|Z(x) - z^\prime|^2|\zeta| + |\zeta|(|v|+|v|^2 + 2|Z(x) - z^\prime||v|)}\\
      & =  e^{-\kappa^\prime \frac{\tilde{r}^2}{16}|\zeta| -|\zeta||Z(x) - z^\prime|\left(\frac{\kappa^\prime}{2}|Z(x) - z^\prime| - 2|v|\right) - |\zeta|(\kappa^\prime \frac{\tilde{r}^2}{16} - |v| - |v|^2)} \\ 
      & \leq e^{-\kappa^\prime \frac{\tilde{r}^2}{16}|\zeta|},
  \end{align*}
  
  \noindent if we assume $\delta \leq \min \left \{ \frac{\kappa^\prime \tilde{r}}{8}, \frac{\sqrt{1+\frac{\kappa^\prime \tilde{r}^2}{4}}}{2} \right \}$. Therefore if $\delta < \min \left \{ \frac{\kappa^\prime c^\sharp}{4}, \frac{\kappa^\prime \tilde{r}}{8}, \frac{\sqrt{1+\frac{\kappa^\prime \tilde{r}^2}{4}}}{2} \right \}$, $v_{2j}^\epsilon(z)$ is uniformly bounded on the compact sets of $\mathcal{W}_\delta(B_{\tilde{r}/2}, \Gamma_j)$, and thus there exists a sequence $\epsilon_k \to 0$ such that $v_{2j}^{\epsilon_k}(z) \rightarrow v_{2j}(z)$, which is holomorphic on $\mathcal{W}_\delta(B_{\tilde{r}/2}(0), \Gamma_j)$. Now we claim that $\lim_{\epsilon \to 0^+} v_{2j}^\epsilon (Z(x)) = \mathrm{bv}_{\Gamma_j}(v_{2j}(z))$ on $Z(B_{\tilde{r}/2}(0))$. So let $g \in \mathcal{C}^\infty_c(Z(B_{\tilde{r}/2}(0)))$. Then
  
  \begin{align*}
      &\langle v_{2j}^\epsilon(Z(x)), g(Z(x)) \rangle_{\mathcal{D}^\prime(\Sigma)}  = \\ 
      & = \int_{B_{\tilde{r}/2}(0)} v_{2j}^\epsilon(Z(x)) g(Z(x)) \mathrm{d} Z(x) \\
      & = \frac{1}{(2 \pi^3)^{\frac{m}{2}}}\int_{B_{\tilde{r}/2}(0)} \iint_{\mathbb{R}\mathrm{T}^\prime_{Z(\mathbb{R}^m)}(\mathcal{C}_j)} e^{i \zeta \cdot (Z(x) - z^\prime) -\langle \zeta \rangle \langle Z(x) - z^\prime \rangle^2 - \epsilon \langle \zeta \rangle^2} \mathfrak{F}[\chi u](z^\prime,\zeta)\langle \zeta \rangle^{\frac{m}{2}} g(Z(x)) \mathrm{d}z^\prime\mathrm{d}\zeta \mathrm{d} Z(x) \\
      & = \frac{1}{(2 \pi^3)^{\frac{m}{2}}}\int_{B_{\tilde{r}/2}(0)} \int_{\mathbb{R}^m} \int_{\mathbb{R}\mathrm{T}^\prime_{Z(x^\prime)}(\mathcal{C}_j)} e^{i \zeta \cdot (Z(x) - Z(x^\prime)) -\langle \zeta \rangle \langle Z(x) - Z(x^\prime) \rangle^2 - \epsilon \langle \zeta \rangle^2} \mathfrak{F}[\chi u](Z(x^\prime),\zeta)\langle \zeta \rangle^{\frac{m}{2}} g(Z(x)) \mathrm{d}\zeta  \mathrm{d} Z(x^\prime) \mathrm{d} Z(x) \\ 
      & =  \frac{1}{(2 \pi^3)^{\frac{m}{2}}}\int_{B_{\tilde{r}/2}(0)} \int_{\mathbb{R}^m} \int_{\mathbb{R}\mathrm{T}^\prime_{Z(x^\prime)}(\mathcal{C}_j)} \Bigg \langle \chi(y)u(Z(y)), e^{i \zeta \cdot (Z(x^\prime) - Z(y)) - \langle \zeta \rangle \langle Z(x^\prime) - Z(y) \rangle^2} \Delta(Z(x^\prime) - Z(y),\zeta) \cdot \\ 
      & \qquad \cdot e^{i \zeta \cdot (Z(x) - Z(x^\prime)) -\langle \zeta \rangle \langle Z(x) - Z(x^\prime) \rangle^2 - \epsilon \langle \zeta \rangle^2} \langle \zeta \rangle^{\frac{m}{2}}  \Bigg \rangle_{\mathcal{D}^\prime(\Sigma)} g(Z(x)) \mathrm{d}\zeta \mathrm{d} Z(x^\prime) \mathrm{d} Z(x) \\ 
      & = \Bigg \langle \chi(y)u(Z(y)), \frac{1}{(2 \pi^3)^{\frac{m}{2}}}\int_{B_{\tilde{r}/2}(0)} \int_{\mathbb{R}^m} \int_{\mathbb{R}\mathrm{T}^\prime_{Z(x^\prime)}(\mathcal{C}_j)} e^{i \zeta \cdot (Z(x) - Z(y)) - \langle \zeta \rangle \left( \langle Z(x^\prime) - Z(y) \rangle^2 + \langle Z(x) - Z(x^\prime) \rangle^2 \right) - \epsilon \langle \zeta \rangle^2} \cdot \\ 
      & \qquad \cdot \Delta(Z(x^\prime) - Z(y), \zeta) \langle \zeta \rangle^{\frac{m}{2}} g(Z(x)) \mathrm{d}\zeta \mathrm{d} Z(x^\prime) \mathrm{d} Z(x) \Bigg \rangle_{\mathcal{D}^\prime(\Sigma)} \\ 
      & = \Bigg \langle \chi(y)u(Z(y)), \frac{1}{(2 \pi^3)^{\frac{m}{2}}}\int_{B_{\tilde{r}/2}(0)} \int_{\mathbb{R}^m} \int_{\mathbb{R}\mathrm{T}^\prime_{Z(y)}(\mathcal{C}_j)} e^{i \zeta \cdot (Z(x) - Z(y)) - \langle \zeta \rangle \left( \langle Z(x^\prime) - Z(y) \rangle^2 + \langle Z(x) - Z(x^\prime) \rangle^2 \right) - \epsilon \langle \zeta \rangle^2} \cdot \\ 
      & \qquad \cdot \Delta(Z(x^\prime) - Z(y), \zeta) \langle \zeta \rangle^{\frac{m}{2}} g(Z(x)) \mathrm{d}\zeta \mathrm{d} Z(x^\prime) \mathrm{d} Z(x) \Bigg \rangle_{\mathcal{D}^\prime(\Sigma)} \\
      & = \Bigg \langle \chi(y)u(Z(y)), \frac{1}{(2 \pi^3)^{\frac{m}{2}}}\int_{B_{\tilde{r}/2}(0)}  \int_{\mathbb{R}\mathrm{T}^\prime_{Z(y)}(\mathcal{C}_j)} \Bigg \{ \int_{\mathbb{R}^m} e^{- \langle \zeta \rangle \left( \langle Z(x^\prime) - Z(y) \rangle^2 + \langle Z(x) - Z(x^\prime) \rangle^2 \right)} \langle \zeta \rangle^{\frac{m}{2}} \Delta(Z(x^\prime) - Z(y), \zeta) \mathrm{d} Z(x^\prime) \Bigg \} \cdot \\
      & \qquad \cdot e^{i \zeta \cdot (Z(x) - Z(y)) -\epsilon \langle \zeta \rangle^2}  g(Z(x)) \mathrm{d}\zeta \mathrm{d} Z(x) \Bigg \rangle_{\mathcal{D}^\prime(\Sigma)}.
  \end{align*}
  
  \noindent We shall use the following identity:
  
  \begin{equation*}
      \frac{\langle \zeta \rangle ^{\frac{m}{2}}}{\pi^\frac{m}{2}} \int_{Z(\mathbb{R}^m)} e^{\langle \zeta \rangle \left( \langle z^\prime - \omega \rangle^2 + \langle z - z^\prime \rangle^2 \right)} \Delta(z^\prime - \omega, \zeta) \mathrm{d}z^\prime = \frac{e^{-\frac{1}{2}}\langle \zeta \rangle \langle z - \omega \rangle^2}{2^\frac{m}{2}} \Delta \left(\frac{z-\omega}{2}, \zeta \right),
  \end{equation*}
  
  \noindent for every $z,\omega \in \mathbb{C}^m$. Thus 
  
  \begin{align*}
    &\langle v_{2j}^\epsilon(Z(x)), g(Z(x)) \rangle_{\mathcal{D}^\prime(\Sigma)}  = \\ & = \Bigg \langle \chi(y)u(Z(y)), \frac{1}{(2 \pi)^{m}}\int_{B_{\tilde{r}/2}(0)}  \int_{\mathbb{R}\mathrm{T}^\prime_{Z(y)}(\mathcal{C}_j)} e^{i \zeta \cdot (Z(x) - Z(y)) - \frac{1}{2} \langle \zeta \rangle \langle Z(x) - Z(y) \rangle^2 -\epsilon \langle \zeta \rangle^2} \Delta \left(\frac{Z(x)-Z(y)}{2}, \zeta \right) g(Z(x)) \mathrm{d}\zeta \mathrm{d} Z(x) \Bigg \rangle_{\mathcal{D}^\prime(\Sigma)}\\
    & = \Bigg \langle \chi(y)u(Z(y)), \frac{1}{(2 \pi)^{m}} \int_{\mathbb{R}\mathrm{T}^\prime_{Z(y)}(\mathcal{C}_j)} e^{-\epsilon \langle \zeta \rangle^2} \mathfrak{F}^\frac{1}{2}[g](Z(y), -\zeta) \mathrm{d}\zeta \Bigg \rangle_{\mathcal{D}^\prime(\Sigma)} \\
    & \underset{\epsilon \to 0^+}{\longrightarrow} \Bigg \langle \chi(y)u(Z(y)), \frac{1}{(2 \pi)^{m}} \int_{\mathbb{R}\mathrm{T}^\prime_{Z(y)}(\mathcal{C}_j)} \mathfrak{F}^\frac{1}{2}[g](Z(y), -\zeta) \mathrm{d}\zeta \Bigg \rangle_{\mathcal{D}^\prime(\Sigma)}.
  \end{align*}
  
  \noindent On the other hand, we have that
  
  \begin{align*}
      &\langle \mathrm{bv}_{\Gamma_j}(v_{2j}(z)), g \rangle_{\mathcal{D}^\prime(\Sigma)} =\\
      &= \lim_{\lambda \to 0^+} \int_{B_{\tilde{r}/2}(0)} v_{2j} (Z(x) + i\lambda v) g(Z(x)) \mathrm{d} Z(x) \\ 
      & = \lim_{\lambda \to 0^+} \frac{1}{(2 \pi^3)^{\frac{m}{2}}}\int_{B_{\tilde{r}/2}(0)} \iint_{\mathbb{R}\mathrm{T}^\prime_{Z(\mathbb{R}^m)}(\mathcal{C}_j)} e^{i \zeta \cdot (Z(x) + i \lambda v - z^\prime) -\langle \zeta \rangle \langle Z(x) + i\lambda v - z^\prime \rangle^2} \mathfrak{F}[\chi u](z^\prime,\zeta)\langle \zeta \rangle^{\frac{m}{2}} g(Z(x)) \mathrm{d}z^\prime\mathrm{d}\zeta \mathrm{d} Z(x) \\
      & =  \lim_{\lambda \to 0^+} \frac{1}{(2 \pi^3)^{\frac{m}{2}}}\int_{B_{\tilde{r}/2}(0)} \int_{\mathbb{R}^m} \int_{\mathbb{R}\mathrm{T}^\prime_{Z(x^\prime)}(\mathcal{C}_j)} e^{i \zeta \cdot (Z(x) + i \lambda v - Z(x^\prime)) -\langle \zeta \rangle \langle Z(x) + i \lambda v - Z(x^\prime) \rangle^2} \mathfrak{F}[\chi u](Z(x^\prime),\zeta)\langle \zeta \rangle^{\frac{m}{2}} g(Z(x)) \mathrm{d}\zeta  \mathrm{d} Z(x^\prime) \mathrm{d} Z(x) \\ 
      & =  \lim_{\lambda \to 0^+}\frac{1}{(2 \pi^3)^{\frac{m}{2}}}\int_{B_{\tilde{r}/2}(0)} \int_{\mathbb{R}^m} \int_{\mathbb{R}\mathrm{T}^\prime_{Z(x^\prime)}(\mathcal{C}_j)} \Bigg \langle \chi(y)u(Z(y)), e^{i \zeta \cdot (Z(x^\prime) - Z(y)) - \langle \zeta \rangle \langle Z(x^\prime) - Z(y) \rangle^2} \Delta(Z(x^\prime) - Z(y),\zeta) \cdot \\ 
      & \qquad \cdot e^{i \zeta \cdot (Z(x) + i \lambda v - Z(x^\prime)) -\langle \zeta \rangle \langle Z(x) + i \lambda v - Z(x^\prime) \rangle^2} \langle \zeta \rangle^{\frac{m}{2}}  \Bigg \rangle_{\mathcal{D}^\prime(\Sigma)} g(Z(x)) \mathrm{d}\zeta \mathrm{d} Z(x^\prime) \mathrm{d} Z(x) \\ 
      & =  \lim_{\lambda \to 0^+} \Bigg \langle \chi(y)u(Z(y)), \frac{1}{(2 \pi^3)^{\frac{m}{2}}}\int_{B_{\tilde{r}/2}(0)} \int_{\mathbb{R}^m} \int_{\mathbb{R}\mathrm{T}^\prime_{Z(x^\prime)}(\mathcal{C}_j)} e^{i \zeta \cdot (Z(x) + i \lambda v - Z(y)) - \langle \zeta \rangle \left( \langle Z(x^\prime) - Z(y) \rangle^2 + \langle Z(x) + i \lambda v - Z(x^\prime) \rangle^2 \right)} \cdot \\ 
      & \qquad \cdot \Delta(Z(x^\prime) - Z(y), \zeta) \langle \zeta \rangle^{\frac{m}{2}} g(Z(x)) \mathrm{d}\zeta \mathrm{d} Z(x^\prime) \mathrm{d} Z(x) \Bigg \rangle_{\mathcal{D}^\prime(\Sigma)} \\ 
      & =  \lim_{\lambda \to 0^+}\Bigg \langle \chi(y)u(Z(y)), \frac{1}{(2 \pi^3)^{\frac{m}{2}}}\int_{B_{\tilde{r}/2}(0)} \int_{\mathbb{R}^m} \int_{\mathbb{R}\mathrm{T}^\prime_{Z(y)}(\mathcal{C}_j)} e^{i \zeta \cdot (Z(x) + i \lambda v - Z(y)) - \langle \zeta \rangle \left( \langle Z(x^\prime) - Z(y) \rangle^2 + \langle Z(x) + i \lambda v - Z(x^\prime) \rangle^2 \right)} \cdot \\ 
      & \qquad \cdot \Delta(Z(x^\prime) - Z(y), \zeta) \langle \zeta \rangle^{\frac{m}{2}} g(Z(x)) \mathrm{d}\zeta \mathrm{d} Z(x^\prime) \mathrm{d} Z(x) \Bigg \rangle_{\mathcal{D}^\prime(\Sigma)} \\
      & = \lim_{\lambda \to 0^+} \Bigg \langle \chi(y)u(Z(y)), \frac{1}{(2 \pi^3)^{\frac{m}{2}}}\int_{B_{\tilde{r}/2}(0)}  \int_{\mathbb{R}\mathrm{T}^\prime_{Z(y)}(\mathcal{C}_j)} \Bigg \{ \int_{\mathbb{R}^m} e^{- \langle \zeta \rangle \left( \langle Z(x^\prime) - Z(y) \rangle^2 + \langle Z(x) + i \lambda v - Z(x^\prime) \rangle^2 \right)} \langle \zeta \rangle^{\frac{m}{2}} \Delta(Z(x^\prime) - Z(y), \zeta) \mathrm{d} Z(x^\prime) \Bigg \} \cdot \\
      & \qquad \cdot e^{i \zeta \cdot (Z(x) + i \lambda v - Z(y))}  g(Z(x)) \mathrm{d}\zeta \mathrm{d} Z(x) \Bigg \rangle_{\mathcal{D}^\prime(\Sigma)} \\
      & = \lim_{\lambda \to 0^+} \Bigg \langle \chi(y)u(Z(y)), \frac{1}{(2 \pi)^{m}}\int_{B_{\tilde{r}/2}(0)}  \int_{\mathbb{R}\mathrm{T}^\prime_{Z(y)}(\mathcal{C}_j)} e^{i \zeta \cdot (Z(x) + i \lambda v - Z(y)) - \frac{1}{2} \langle \zeta \rangle \langle Z(x) + i \lambda v - Z(y) \rangle^2}\cdot \\
      & \qquad \cdot \Delta \left(\frac{Z(x) + i \lambda v-Z(y)}{2}, \zeta \right) g(Z(x)) \mathrm{d}\zeta \mathrm{d} Z(x) \Bigg \rangle_{\mathcal{D}^\prime(\Sigma)}\\
      & = \lim_{\lambda \to 0^+} \Bigg \langle \chi(y)u(Z(y)), \frac{1}{(2 \pi)^{m}} \int_{\mathbb{R}\mathrm{T}^\prime_{Z(y)}(\mathcal{C}_j)} \mathfrak{F}^\frac{1}{2}[g](Z(y) - i \lambda v, -\zeta) \mathrm{d}\zeta \Bigg \rangle_{\mathcal{D}^\prime(\Sigma)} \\
      & = \Bigg \langle \chi(y)u(Z(y)), \frac{1}{(2 \pi)^{m}} \int_{\mathbb{R}\mathrm{T}^\prime_{Z(y)}(\mathcal{C}_j)} \mathfrak{F}^\frac{1}{2}[g](Z(y), -\zeta) \mathrm{d}\zeta \Bigg \rangle_{\mathcal{D}^\prime(\Sigma)}.
  \end{align*}
  
  \noindent We have then proved our claim. So we  conclude that
  
  \begin{equation*}
     (\chi u)|_{B_{\tilde{r}/2}(0)} = v_1|_{B_{\tilde{r}/2}(0)} + \sum_{j = 1}^N \mathrm{bv}_{\Gamma_j}(v_{2j}). 
  \end{equation*}
  
  \noindent Therefore the point $(0,\xi_0) \notin \mathrm{WF}_\mathcal{M}(u)$.
  
\end{proof}
\comment{ 

\begin{thm} \label{thm:vector-extension-FBI}
Let $\mathcal{M}$ be a regular sequence with moderate growth.
Let $\Sigma \subset \C^m$ be a maximally real submanifold passing through the origin, and let $X = (X_1, \dots, X_m)$ be the dual basis of $\d (z_1|_\Sigma), \dots, \d (z_m|_\Sigma)$ near the origin.
Let $u \in \mathcal{D}^\prime(\Sigma)$ and let $\xi_0 \subset \R^m \setminus 0$. 
The following are equivalent:
\begin{enumerate}
    \item The direction $\zeta_0 = {}^t Z_x(x)\xi_0$ 
    \item There exists $V_0 \subset \Sigma$ a neighborhood of the origin such that $u|_{V_0}\in\Cl^\mathcal{M}(V_0; X)$;
    \item There exist $\mathcal{O} \subset \mathbb{C}^m$ a neighborhood of the origin, a function $F \in \mathcal{C}^\infty(\mathcal{O})$, and a constant $C>0$ such that
    \begin{equation}
        \begin{dcases}
            F|_{\mathcal{O}\cap \Sigma} = u|_{\mathcal{O}\cap \Sigma}, \\
            \big|\bar{\partial}_z F(z)\big| \leq C^{k+1}m_k \mathrm{dist}(z,\Sigma)^k,\quad \forall k \in \mathbb{Z}_+ \forall z \in \mathcal{O}.
        \end{dcases}
    \end{equation}
    \item For every $\chi \in \Cl_c^\infty(\Sigma)$, with $0 \leq \chi \leq 1$ and $\chi \equiv 1$ in some open neighborhood of the origin, there exist $V \subset \Sigma$ a neighborhood of the origin and a constant $C > 0$ such that
    \begin{equation} \label{eq:FBI-decay}
        |\mathfrak{F}[\chi u](z,\zeta)| \leq 
        \frac{C^k m_k}{|\zeta|^k}, \qquad \forall (z,\zeta) \in \R\T^\prime_{V} \setminus 0.
    \end{equation}
\end{enumerate}
\end{thm}

\noindent Before proving this theorem we shall enunciate a formula for the derivatives of the Gaussian, that follows from Fa\`a di Bruno's formula (see for instance \cite{bierstone}) :

\begin{lemma}
Let $\lambda>0$ and $\alpha\in\mathbb{Z}_+^m$. Then

\begin{equation}\label{eq:derivative-gaussian}
    \del_x^\alpha e^{-\lambda|x|^2}=\sum_{l=(l^\prime,l^{\prime\prime})}\frac{\alpha!}{l!}(-\lambda)^{|l|}(2x_1,\dots,2x_m)^{l^\prime}e^{-\lambda|x|^2},
\end{equation}

\noindent where the sum is taken on the set 

\begin{equation*}
    \{l=(l_1^1,l_2^1,\dots,l_1^m,l_2^m)\,:\,l_1^j+2l_2^j=j,\, j=1,\dots,m\},
\end{equation*}

\noindent and $l^\prime=(l_1^1,\dots,l_1^m)$ and $l^{\prime\prime}=(l_2^1,\dots,l_2^m)$.

\end{lemma}

\begin{proof}
\color{blue} (2.) implies (3.)
\color{black}

Our hypotesis is the following:
\begin{enumerate}
    \item[] ``For every open subset $V_0 \Subset V$, 
    there exists an open neighborhood 
    $\mathcal{O} \subset \C^m$ 
    of $Z(V_0)$ and there are a constant $C > 0$ and a function $F \in \Cl^2(\mathcal{O})$ satisfying the following conditions
    \begin{equation*}
    \begin{cases}
        F(Z(x)) = f(x), &x \in V_0, \\
        \big| \del_{\bar{z}}F(z) \big| 
        \leq
        C^{k+1} m_k (\Im \, z - \phi(\Re \, z))^k, 
        &z \in \mathcal{O}, \; k \in \Z_+.\text{''}
    \end{cases}
    \end{equation*}
\end{enumerate}
\textcolor{red}{
Let $V \Subset U \subset \R^m$ be open neighbourhoods of the origin and let $\delta > 0$ and 
$F \in \Cl^\infty_c(U \times \R^m)$ 
satisfying:
\begin{equation*}
    \begin{cases}
        F(u,0) = f(u), &u \in V;\\
        \big|L_jF(u,v)\big|\leq Ch(Q|v|), &u\in V,\,|v|<\delta,\,j=1,\dots,m.
    \end{cases}
\end{equation*}
}
Our aim is to show that for every
$\chi \in \Cl_c^\infty(V)$,
with $0 \leq \chi \leq 1$ and $\chi \equiv 1$ in some open neighborhood of the origin, 
there exist an open ball centered at the origin $\widetilde{V}\subset V_1$, where $V_1\Subset V$ is any open set containing $\mathrm{supp}\, \chi$,
and a constant $C > 0$ such that for every $k \in \Z_+$ the following estimate holds
\begin{equation*}
    |\mathfrak{F}[\chi f](z,\zeta)| \leq 
    C^{k+1}\dfrac{M_k}{|\zeta|^k},
    \qquad 
    (z,\zeta) \in \R\T^\prime_{\Sigma}|_{\widetilde{V}}\setminus 0.
\end{equation*}
We have
\begin{equation*}
    \mathfrak{F}[\chi f](z,\zeta) = 
    \int e^{i\zeta\cdot(z-Z(x)) - \langle \zeta \rangle \langle z-Z(x)\rangle^2}
    \chi(x)f(x) \Delta(z-Z(x),\zeta)\, \d Z.
\end{equation*}
We shall deform the contour of integration. 
Let $(z,\zeta) \in \left.\R\T^\prime_{\Sigma}\right|_{\widetilde{V}}$ be fixed, where $\widetilde{V}\Subset V$ is a ball centered at the origin to be chosen latter. 
Let $\lambda > 0$ be such that the image of the map

\begin{equation*}
    V_1\ni y\mapsto \Theta_\lambda(y)\doteq Z(y)-i\lambda \mathbb{E}_{V_2}(y)\frac{\zeta}{\langle\zeta\rangle},
\end{equation*}

\noindent is contained in $\mathcal{O}$, where $\mathbb{E}_{V_2}$ is characteristic function of $V_2$. By Stokes' theorem we obtain
\begin{align*}
     \mathfrak{F}[\chi f](z,\zeta) =&
     \int_{V_1\setminus V_2}e^{i\zeta\cdot(z-Z(x))-\langle\zeta\rangle\langle z-Z(x)\rangle^2}\chi(x)f(x)\Delta(z-Z(x),\zeta) \, \d Z\\
     &+\int_{ V_2}e^{i\zeta\cdot(z-\Theta_\lambda(x))-\langle\zeta\rangle\langle z-\Theta_\lambda(x)\rangle^2}F(\Theta_\lambda(x))\Delta(z-\Theta_\lambda(x),\zeta) \, \d Z\\
     &+(-1)^{m-1}2i\int_0^\lambda\int_{V_2}e^{i\zeta\cdot(z-\Theta_\sigma(x))-\langle\zeta\rangle\langle z-\Theta_\sigma(x)\rangle^2}\del_{\bar{z}} F(\Theta_\sigma(x))\cdot\frac{\zeta}{\langle\zeta\rangle}\Delta(z-\Theta_\sigma(x),\zeta) \, \d Z \, \d\sigma\\
     &-\int_0^\lambda\int_{\del V_2}e^{i\zeta\cdot(z-\Theta_\sigma(x))-\langle\zeta\rangle\langle z-\Theta_\sigma(x)\rangle^2}F(\Theta_\sigma(x))\Delta(z-\Theta_\sigma(x),\zeta) \, \d S_\Sigma \, \d\sigma,
\end{align*}

\noindent where $\d S_\Sigma$ is the surface measure in $\{Z(x)\,:\,x\in\del V_2\}$. 
We shall estimate these four integrals separately, and we reefer to them as $(1),(2),(3)$, and $(4)$. Since the estimate for $(1)$ and $(4)$ are very similar, we will estimate them first. We start writing $\widetilde{V}=B_{r}(0)$, so $z=Z(x_0)$ for some $x_0\in B_r(0)$. 
Since $\Sigma$ is well-positioned and 
$|z-Z(x)| \geq |x_0-x|$ for every $x$,  we have
\begin{equation*}
    \Im \, \big\{
    \zeta\cdot (z-Z(x))+i\langle\zeta\rangle\langle z-Z(x)\rangle^2
    \big\} \geq c(r_2-r)^2|\zeta|,
\end{equation*}

\noindent for every $x\in V_1\setminus V_2$, where $V_2=B_{r_2}(0)$, and we are choosing $r<r_2$. Therefore $(1)$ can be estimated by $Ce^{-c(r_2-r)^2|\zeta|}$.

\noindent Now the exponent of $(4)$ can be written as

\begin{equation*}
    i \zeta \cdot (z-\Theta_\sigma(x)) -
    \langle\zeta\rangle
    \langle z-\Theta_\sigma(x) \rangle^2
    =
    i \zeta \cdot(z-Z) 
    -\langle\zeta\rangle
    \langle z-Z \rangle^2 
    \sigma^2 \langle\zeta\rangle
    -2i\sigma(z-Z) \cdot \zeta
    -\sigma \langle\zeta\rangle,
\end{equation*}

\noindent where we are writing $Z=Z(x)$. Now recall that in $(4)$ we are integrating in $\sigma$ from $0$ to $\lambda$, so $\sigma<\lambda$, and using the fact that $\Sigma$ is well-positioned and
\[
\Re \, \langle \zeta \rangle \geq \sqrt{\dfrac{1-\kappa^2}{1+\kappa^2}}|\zeta|
\]
we have
\begin{align*}
    \Im \, \big\{
        \zeta \cdot(z - \Theta_\sigma(x))+i \langle \zeta \rangle \langle z-\Theta_\sigma(x) \rangle^2 
    \big\} &= 
    \Im \, \big\{
        \zeta\cdot(z-Z(x))+i\langle\zeta\rangle\langle z-Z(x)\rangle^2
    \big\}
    + \sigma \Im \, \big\{
        i \langle \zeta \rangle (1 - \sigma) - 2(z-Z(x)) \cdot \zeta
    \big\}\\
    &\geq c|z-Z(x)|^2|\zeta| + \sigma\Re\langle\zeta\rangle(1-\sigma)-2\sigma\Im \, \{\zeta\cdot(z-Z(x))\}\\
    &\geq c|z-Z(x)|^2|\zeta|+\sigma\sqrt{\frac{1-\kappa^2}{1+\kappa^2}}(1-\sigma)|\zeta|-2\sigma|\zeta||z-Z(x)|\\
    &\geq |\zeta||z-Z(x)|\big(c|z-Z(x)|-2\lambda\big)\\
    &\geq |\zeta||z-Z(x)|\big(c(r_2-r)-2\lambda\big)\\
    &\geq |\zeta|(r_2-r)\big(c(r_2-r)-2\lambda\big),
\end{align*}

\noindent where we are choosing $\lambda$ satisfying $2\lambda<c(r_2-r)$. Therefore we can estimate $(4)$ by $Ce^{-\epsilon_1|\zeta|}$, where $\epsilon_1=(r_2-r)\big(c(r_2-r)-2\lambda\big)$. 
Before estimating $(2)$ and $(3)$, note that the exponent that appears in each of them is similar to the one that we have just estimated. In $(2)$ we have that $x\in V_2$, \textit{i.e.}, $|x|<r_2$, so the exponential have the following estimate:

\begin{equation*}
    \left|e^{i\zeta\cdot(z-\Theta_\lambda(x))-\langle\zeta\rangle\langle z-\Theta_\lambda(x)\rangle^2}\right|
    \leq e^{-|\zeta|\left\{\lambda\sqrt{\frac{1-\kappa^2}{1+\kappa^2}}(1-\lambda)+|z-Z|\left[c|z-Z|-2\lambda\right]\right\}},
\end{equation*}

\noindent where again we are writing $Z=Z(x)$. When $c|z-Z(x)|\geq 2\lambda$ we have that

\begin{equation*}
 \left|e^{i\zeta\cdot(z-\Theta_\lambda(x))-\langle\zeta\rangle\langle z-\Theta_\lambda(x)\rangle^2}\right|\leq e^{-|\zeta|\lambda\sqrt{\frac{1-\kappa^2}{1+\kappa^2}}(1-\lambda)},
\end{equation*}

\noindent and when $c|z-Z(x)|\leq 2\lambda$,

\begin{align*}
    \left|e^{i\zeta\cdot(z-\Theta_\lambda(x))-\langle\zeta\rangle\langle z-\Theta_\lambda(x)\rangle^2}\right|&\leq e^{-|\zeta|\left\{\lambda\sqrt{\frac{1-\kappa^2}{1+\kappa^2}}(1-\lambda)-2\lambda|z-Z(x)|\right\}}\\
    &\leq e^{-|\zeta|\left\{\lambda\sqrt{\frac{1-\kappa^2}{1+\kappa^2}}(1-\lambda)-\frac{4\lambda^2}{c}\right\}}\\
    &\leq e^{-|\zeta|\lambda\left\{\sqrt{\frac{1-\kappa^2}{1+\kappa^2}}(1-\lambda)-\frac{4\lambda}{c}\right\}}.
\end{align*}

\noindent Combining these two estimates we conclude that $(2)$ is bounded by $Ce^{-\lambda\epsilon_2|\zeta|}$, where $\epsilon_2=\sqrt{\frac{1-\kappa^2}{1+\kappa^2}}(1-\lambda)-\frac{4\lambda}{c}>0$, decreasing $\lambda$ if necessary.  To estimate $(3)$ we reason as before, so for each $0<\sigma\leq \lambda$ we have that if $|z-Z(x)|\geq 2\sigma/c$ then

\begin{equation*}
 \left|e^{i\zeta\cdot(z-\Theta_\sigma(x))-\langle\zeta\rangle\langle z-\Theta_\sigma(x)\rangle^2}\right|\leq e^{-|\zeta|\sigma\sqrt{\frac{1-\kappa^2}{1+\kappa^2}}(1-\sigma)},
 \end{equation*}
 
 \noindent and if $|z-Z(x)|\leq 2\sigma/c$,
 
\begin{align*}
    \left|e^{i\zeta\cdot(z-\Theta_\sigma(x))-\langle\zeta\rangle\langle z-\Theta_\sigma(x)\rangle^2}\right|&\leq e^{-|\zeta|\left\{\sigma\sqrt{\frac{1-\kappa^2}{1+\kappa^2}}(1-\sigma)-2\sigma|z-Z(x)|\right\}}\\
    &\leq e^{-|\zeta|\left\{\sigma\sqrt{\frac{1-\kappa^2}{1+\kappa^2}}(1-\sigma)-\frac{4\sigma^2}{1-\kappa}\right\}}\\
    &\leq e^{-|\zeta|\sigma\left\{\sqrt{\frac{1-\kappa^2}{1+\kappa^2}}(1-\sigma)-\frac{4\sigma}{1-\kappa}\right\}},
\end{align*}

\noindent and since $\sqrt{\frac{1-\kappa^2}{1+\kappa^2}}(1-\sigma)-\frac{4\sigma}{c}\geq \epsilon_2$, for $\sigma<\lambda$, we have that

\begin{equation*}
    \left|e^{i\zeta\cdot(z-\Theta_\sigma(x))-\langle\zeta\rangle\langle z-\Theta_\sigma(x)\rangle^2}\right|\leq e^{-\sigma\epsilon_2|\zeta|},
\end{equation*}

\noindent for every $x\in V_1$. So for every $k>0$ we can estimate the integral $(4)$ by

\begin{align*}
     \bigg(\int_0^\lambda e^{-\sigma\epsilon_2|\zeta|} \sup_{(x)\in V_0}\left|\del_{\bar{z}}F(\Theta_\sigma(x)\Delta(z-\Theta_\sigma(x),\zeta)\right|\d\sigma\bigg)2\left|\frac{|\zeta|}{\langle\zeta\rangle}\right|\left|\int_{V_1}\left|\d Z(x)\right|\right| & \leq C^{k+1}m_k\int_0^\infty e^{-\sigma\epsilon_2|\zeta|}\left|\frac{\sigma|\zeta|}{\langle\zeta\rangle}\right|^k\d\sigma\\
    &\leq C^{k+1}m_k\int_0^\infty e^{-y}\left(\frac{y}{\epsilon_2|\zeta|}\right)^k\frac{1}{\epsilon_2|\zeta|}\d y\\
    &\leq C^{k+1}\frac{M_k}{(\epsilon_2|\zeta|)^{k+1}}.
\end{align*}

\noindent \textcolor{red}{
    Since the constant $C>0$ does not depend on $k$, and the above estimate holds for every $k>0$, we have that $(4)$ is bounded by $Ce^{-\epsilon_3|\zeta|^{\frac{1}{s}}}$, for some constants $C, \epsilon_3>0$.
} 
Summing up we have obtained the required estimate \eqref{eq:FBI-decay}, with $\widetilde{V}=B_r(0)$, where $r>0$ is any positive number less than $r_2$, the radius of $V_2$. 

\color{blue}(3.) implies (1.)
\color{black}

Let $\widetilde{V}\subset V$ and $\widetilde{W}$ be open balls centered at the origin,  $\chi\in\Cl_c^\infty(V)$ with $0\leq \chi\leq 1$, and $\chi\equiv 1$ in an open ball centered at the origin, and $C,\tilde{\epsilon}>0$ for which the following estimate holds 
 \begin{equation*}
     |\mathfrak{F}[\chi u](z,\zeta)|\leq C^{k+1}\frac{M_k}{|\zeta|^k},
 \end{equation*}
 for every $z=Z(x)$, $\zeta={}^tZ_x(x)^{-1}\xi$, and $k\in\mathbb{Z}_+$, where $x\in \widetilde{V}$, and $\xi\in\R^m\setminus 0$.
 Note that we can choose $\mathrm{supp}\;\chi$ as small as we want, keeping in mind that $\widetilde{V}$ depends on $\chi$. 
 %Since we already have that $L_j u\in \mathrm{G}^s(U;L_1,\dots,L_n,\mathrm{M}_1,\dots,\mathrm{M}_m)$, we only have to prove that there exist $V_0\subset V$ and $W_0\subset W$, open balls centered at the origin, such that, writing $U_0=V_0\times W_0$, $u|_{U_0}\in\mathrm{G}^s(U_0;\mathrm{M}_1,\dots,\mathrm{M}_m)$, since the complex vector fields $\{L_1,\dots,L_n,\mathrm{M}_1,\dots,\mathrm{M}_m\}$ are pair-wise commuting.
 We write $V_0=B_r(0)$ and $W_0=B_\delta(0)$. By \eqref{eq:FBI-inversion-formula} we have that
\begin{align*}
    \chi(x)u(x)=\lim_{\epsilon\to 0^+}\frac{1}{(2\pi^3)^\frac{m}{2}}\iint_{\R\T^\prime_{\Sigma}} e^{i\zeta\cdot(Z(x)-z^\prime)-\langle\zeta\rangle\langle Z(x)-z^\prime\rangle^2-\epsilon\langle\zeta\rangle^2}
    \cdot\mathfrak{F}[\chi u](t;z^\prime,\zeta)\langle\zeta\rangle^\frac{m}{2}\d Z^\prime\wedge\d\zeta.
\end{align*}
We shall split this integral in three regions:
\begin{align*}
    &Q^1\doteq\{(z^\prime,\zeta)\,:\, z^\prime=Z(x^\prime),\;\zeta={}^t Z_x(x^\prime)^{-1}\xi,\;|x^\prime|< \tilde{r}\;\xi\in\R^m\}\\
    &Q^2\doteq\{(z^\prime,\zeta)\,:\,z^\prime= Z(x^\prime),\;\zeta={}^t Z_x(x^\prime)^{-1}\xi, \;\tilde{r}\leq|x^\prime|<r_0\;\xi\in\R^m\}\\
     &Q^3\doteq\{(z^\prime,\zeta)\,:\,z^\prime= Z(x^\prime),\;\zeta={}^t Z_x(x^\prime)^{-1}\xi, \;r_0\leq|x^\prime|\;\xi\in\R^m\},
\end{align*}
where $\tilde{r}$ and $r_0$ are the radii of $\widetilde{V}$ and $V$. For $\epsilon>0$ and $j=1, 2, 3$, we set
\begin{equation*}
    \mathrm{I}_j^\epsilon(x)\doteq\iint_{Q^j}e^{i\zeta\cdot(Z(x)-z^\prime)-\langle\zeta\rangle\langle Z(x)-z^\prime\rangle^2-\epsilon\langle\zeta\rangle^2}\mathfrak{F}[\chi u](z^\prime,\zeta)\langle\zeta\rangle^\frac{m}{2}\d Z^\prime\wedge\d\zeta,
\end{equation*}
so we can write
\begin{equation*}
      \chi(x)u(x)=\lim_{\epsilon\to 0^+}\frac{1}{(2\pi^3)^\frac{m}{2}}\big(\mathrm{I}_1^\epsilon(x)+\mathrm{I}_2^\epsilon(x)+\mathrm{I}_3^\epsilon(x)\big)
\end{equation*}
To prove $\textit{1}.$ it is enough to prove the following:\\
There exists a sequence $\{\epsilon_j\}_{j\in\mathbb{Z}_+}$ with $\epsilon_j\to 0$ such that $\mathrm{I}_2^{\epsilon_j}$ and $\mathrm{I}_3^{\epsilon_j}$ converge to analytic vectors for $\mathrm{M}_1,\dots,\mathrm{M}_m$, and that $\mathrm{I}_1^\epsilon$ converges to a Gevrey vector for $\mathrm{M}_1,\dots,\mathrm{M}_m$. To do so we shall prove that there exist $\mathrm{G}_2^\epsilon(z)$, $\mathrm{G}_3^\epsilon(z)$, $\mathrm{G}_2(z)$ and $\mathrm{G}_3(z)$, holomorphic functions in some open neighborhood of the origin such that $\mathrm{I}_2^\epsilon(x)=\mathrm{G}_2^\epsilon(Z(x))$, $\mathrm{I}_3^\epsilon(x)=\mathrm{G}_3^\epsilon(Z(x))$, and $\mathrm{G}_2^{\epsilon_j}(z)\longrightarrow \mathrm{G}_2(z)$ and $\mathrm{G}_3^{\epsilon_j}(z)\longrightarrow \mathrm{G}_3(z)$ uniformly in $z$, for some sequence $\{\epsilon_j\}_{j\in\mathbb{Z}_+}$ satisfying $\epsilon_j\to 0$, and we shall also prove that there exists a positive constant $C$ such that 

\begin{equation*}
    |\mathrm{M}^\alpha \mathrm{I}_1^\epsilon(x)|\leq C^{|\alpha|+1}\alpha!^s,\quad\forall\alpha\in\mathbb{Z}_+^m,
\end{equation*}

\noindent for all $(x)\in U_0$ and $\epsilon>0$. \\

Let us then begin with the term $\mathrm{I}^\epsilon_2(x)$. Let $(z^\prime,\zeta)\in Q^2$. Since $z^\prime=Z(x^\prime)$, with $x^\prime\in V$, we can use \eqref{eq:well-positioned} and \eqref{eq:bound-derivative-phi} to obtain

\begin{align*}
    \Im\{\zeta\cdot(Z(0)-Z(x^\prime))+&i\langle\zeta\rangle\langle Z(0)-Z(x^\prime)\rangle^2\}\geq\\
    &c|\zeta||Z(0)-Z(x^\prime)|^2\\
    &\geq c|\zeta|(1-\mu^2)|x^\prime|^2\\
    &\geq c(1-\mu^2)\tilde{r}|\zeta|,
\end{align*}

\noindent in other words

\begin{equation*}
    \sup_{(z^\prime,\zeta)\in Q^2}\frac{ \Im\{\zeta\cdot(Z(0)-z^\prime)+i\langle\zeta\rangle\langle Z(0)-z^\prime\rangle^2\}}{|\zeta|}\geq c(1-\mu^2)\tilde{r}.
\end{equation*}

\noindent So there are $\mathcal{O}_1\subset\C^m$ an open neighborhood of the origin and $W_1\Subset W$ an open neighborhood of the origin, such that

\begin{equation*}
    \sup_{(z^\prime,\zeta)\in Q^2}\frac{ \Im\{\zeta\cdot(z-z^\prime)+i\langle\zeta\rangle\langle z-z^\prime\rangle^2\}}{|\zeta|}\geq \frac{ c(1-\mu^2)\tilde{r}}{2},\quad\forall z\in\mathcal{O}_1, t\in W_1.
\end{equation*}

\noindent Now using \eqref{eq:first-bound-FBI} we obtain

\begin{equation}\label{eq:estimate-integrand-I_2}
    \left|e^{i\zeta\cdot(z-z^\prime)-\langle\zeta\rangle\langle z-z^\prime\rangle^2}\mathfrak{F}[\chi u](z^\prime,\zeta)\langle\zeta\rangle^\frac{m}{2}\right|\leq C(1+|\zeta|)^{k+\frac{m}{2}}e^{-\frac{c(1-\mu^2)\tilde{r}}{2}|\zeta|},
\end{equation}

\noindent for some $k\geq 0$ and for all $z\in \mathcal{O}_1$, $(z^\prime,\zeta)\in Q^2$. Now set 

\begin{equation*}
    \mathrm{G}_2^\epsilon(z)\doteq\iint_{Q^2}e^{i\zeta\cdot(z-z^\prime)-\langle\zeta\rangle\langle z-z^\prime\rangle^2-\epsilon\langle\zeta\rangle^2}\mathfrak{F}[\chi u](z^\prime,\zeta)\langle\zeta\rangle^\frac{m}{2}\d Z^\prime\wedge\d\zeta,
\end{equation*}

\noindent and

\begin{equation*}
    \mathrm{G}_2(z)\doteq\iint_{Q^2}e^{i\zeta\cdot(z-z^\prime)-\langle\zeta\rangle\langle z-z^\prime\rangle^2}\mathfrak{F}[\chi u](z^\prime,\zeta)\langle\zeta\rangle^\frac{m}{2}\d Z^\prime\wedge\d\zeta,
\end{equation*}

\noindent for $\epsilon>0$ and $z\in\mathcal{O}_1$. Let $V_1\Subset V$ and $W_2\Subset W_1$ such that $\{Z(x)\;:\;(x)\in V_1\}\subset \mathcal{O}_1$, so $\mathrm{G}_2^\epsilon(Z(x))=\mathrm{I}_2^\epsilon(x)$ for every $(x)\in V_1$. Define $\mathrm{I}_2(x)\doteq\mathrm{G}_2(Z(x)$, for $x\in V_1$. In view of \eqref{eq:estimate-integrand-I_2} we have that $\mathrm{G}_2^\epsilon(z)$ and $\mathrm{G}_2(z)$ are holomorphic with respect to $z$, and $\mathrm{G}_2^\epsilon(z)\longrightarrow\mathrm{G}_2(z)$ uniformly on $\mathcal{O}_1$. 

Now we shall deal with the term $\mathrm{I}_3^\epsilon(x)$. We can deform the domain of integration with respect to the variable $\zeta$ (using Stokes' theorem), moving the contour of the integration from $\left.\R\T^\prime_{\Sigma}\right|_{Z(x^\prime)}$ to $\R^m$. Now for every $\epsilon>0$ we set

\begin{align}\label{eq:defn-G_3-epsilon}
    \mathrm{G}_3^\epsilon(z)\doteq&\int_{\R^m}\int_{r_0\leq|x^\prime|}\Big\langle \widetilde{\chi u}(z^{\prime\prime}), |\xi|^\frac{m}{2}\Delta(Z^\prime-z^{\prime\prime},\xi)\cdot\\\nonumber
    &\cdot e^{i\xi\cdot(z-z^{\prime\prime})))-|\xi|\big[\langle z-Z^\prime\rangle^2+\langle Z^\prime-z^{\prime\prime}\rangle^2\big]-\epsilon|\xi|^2}\Big\rangle\d Z^\prime\d\xi,
\end{align}

\noindent for $z\in\C^m$, where we are writing $Z^\prime=Z(x^\prime)$, and $\widetilde{\chi u}(z^{\prime\prime})=\chi(x^{\prime\prime})u(x^{\prime\prime})$, for $z^{\prime\prime}=Z(x^{\prime\prime})$. As usual, we begin estimating the exponential, but first for $z=Z(0)$:

\begin{align*}
    \Big|e^{i\xi\cdot(z- z^{\prime\prime})-|\xi|\big[\langle z- Z^\prime\rangle^2+\langle Z^\prime-z^{\prime\prime}\rangle^2\big]}\Big|&\leq e^{|\xi||\phi(0)-\phi(x^{\prime\prime})|-|\xi|\big[|x^{\prime}|^2-|\phi(0)-\phi(x^\prime)|^2\big]}\\
    &\quad\cdot e^{-|\xi|\big[|x^\prime-x^{\prime\prime}|^2-|\phi(x^\prime)-\phi(x^{\prime\prime})|^2\big]}\\
    &\leq e^{-|\xi|\big[(1-\mu^2)|x^\prime-x^{\prime\prime}|^2+(1-\mu^2)|x^\prime|^2-\mu|x^{\prime\prime}|\big]},
\end{align*}

\noindent where $z^{\prime\prime}=Z(x^{\prime\prime})$ with $x^{\prime\prime}\in \textrm{supp}\,\chi$, and $r_0\leq|x^\prime|$. Note that the previous argument (for $\mathrm{I_2^\epsilon}$) does not depend on the "size" of $\mathrm{supp}\,\chi$, therefore we can shrink it as we want to. So we can assume that $|x^{\prime\prime}|$ is small enough so 

\begin{equation*}
     \left|e^{i\xi\cdot(Z(0)- z^{\prime\prime})-|\xi|\big[\langle Z(0)- Z(x^\prime)\rangle^2+\langle Z(x^\prime)-z^{\prime\prime})\rangle^2\big]}\right|\leq e^{-|\xi|(1-\mu^2)|x^\prime|^2}.
\end{equation*}

\noindent Now, for $z\in\C^m$ we have that

\begin{align*}
     \bigg|&e^{i\xi\cdot(z- z^{\prime\prime})-|\xi|\big[\langle z- Z(x^\prime)\rangle^2+\langle Z(x^\prime)-z^{\prime\prime}\rangle^2\big]}\bigg|= \\
     &\hspace{2cm}=\left|e^{i\xi\cdot(Z(0)- z^{\prime\prime})-|\xi|\big[\langle Z(0)- Z(x^\prime)\rangle^2+\langle Z(x^\prime)-z^{\prime\prime}\rangle^2\big]}\right|\cdot\\
     &\hspace{2cm}\cdot \left|e^{i\xi\cdot(z-Z(0))-|\xi|\big[\langle z-Z(0)\rangle^2+2i(z-Z(0))\cdot(Z(0)-Z(x^\prime))\big]}\right|\\
     &\hspace{2cm}\leq  e^{-|\xi|(1-\mu^2)|x^\prime|^2}e^{|\xi||z-Z(0)|\big[1+|z-Z(0)|+2|Z(0)-Z(x^\prime)|\big]}\\
     &\hspace{2cm}\leq e^{-|\xi|(1-\mu^2)|x^\prime|^2}e^{|\xi||z-Z(0)|\big[1+|z-Z(0)|+2(1+\mu)|x^\prime|\big]}.
\end{align*}

\noindent By continuity we can choose $\rho>0$ such that if $|z-Z(0)|<\rho$, then 

\[\frac{(1-\mu^2)}{2}|x^\prime|^2-|z-Z(0)|\big[1+|z-Z(0)|+2(1+\mu)|x^\prime|\big]\geq 0,\quad\forall|x^\prime|\geq r_0.\]

\noindent If we define $\mathcal{O}_2\subset\C^m$ as

\begin{equation*}
    \mathcal{O}_2\doteq\left\{z\in\C^m\;:|z-Z(0)|<\rho\right\},
\end{equation*}

\noindent then for every $z\in\mathcal{O}_2$ and $r_0\leq|x^\prime|$, we have that

\begin{equation*}
     \left|e^{i\xi\cdot(z- z^{\prime\prime})-|\xi|\big[\langle z- Z(x^\prime)\rangle^2+\langle Z(x^\prime)-z^{\prime\prime}\rangle^2\big]}\right|\leq e^{-|\xi|\frac{(1-\mu^2)}{2}|x^\prime|^2},
\end{equation*}

\noindent where again $z^{\prime\prime}=Z(x^{\prime\prime})$ and $x^{\prime\prime}\in\mathrm{supp}\,\chi$. Since $\mathrm{supp}\,\chi$ and are compact sets, there exist $k\in\mathbb{Z_+}$ and $C>0$, such that

\begin{align*}
    \Big|\Big\langle \widetilde{\chi u}(z^{\prime\prime}),|\xi|^\frac{m}{2}&\Delta(Z(x^\prime)-z^{\prime\prime},\xi)e^{i\xi\cdot(z-Z(x^{\prime\prime})))-\epsilon|\xi|^2}\cdot\\
    &\cdot e^{-|\xi|\big[\langle z-Z(x^\prime)\rangle^2+\langle Z(x^\prime)-Z(x^{\prime\prime})\rangle^2\big]}\Big\rangle\Big|\leq\\
    &\leq C_1\sum_{|\alpha|\leq k}\sup_{x^{\prime\prime}\in\mathrm{supp}\,\chi}\Big|\del_{x^{\prime\prime}}^\alpha\Big\{\chi(x^{\prime\prime})|\xi|^\frac{m}{2}\Delta(Z(x^\prime)-Z(x^{\prime\prime}),\xi)\cdot\\
    &\hspace{2cm}\cdot \det Z_x(x^{\prime\prime},t)e^{i\xi\cdot(z-Z(x^{\prime\prime})))-\epsilon|\xi|^2}\cdot\\
    &\hspace{2cm}\cdot e^{-|\xi|\big[\langle z-Z(x^\prime)\rangle^2+\langle Z(x^\prime)-Z(x^{\prime\prime})\rangle^2\big]}\Big\}\Big|\\
    &\leq C_2|\xi|^{k+\frac{m}{2}}e^{-|\xi|\frac{(1-\mu^2)}{2}|x^\prime|^2},
\end{align*}

\noindent for every $z\in\mathcal{O}_2$, where the constant $C_1>0$ is given from \eqref{eq:uniform-cont-distribution-solution}, so the constant $C_2$ depends on $\mathrm{supp}\,\chi$, and $k$. Therefore the integrand in \eqref{eq:defn-G_3-epsilon} is dominated by 

\begin{equation} \label{eq:integrand-G_3-dominated}
    C_1|\xi|^{k+\frac{m}{2}}e^{-|\xi|\frac{(1-\mu^2)}{4}|x^\prime|^2}e^{-|\xi|\frac{(1-\mu^2)}{4}r_0^2}.
\end{equation}

\noindent Now since the integral of $e^{-|\xi|\frac{(1-\mu^2)}{4}|x^\prime|^2}$, with respect to $x^\prime$, is bounded by a constant times $|\xi|^{-\frac{m}{2}}$, we have that \eqref{eq:integrand-G_3-dominated} is an integrable function with respect to $(x^\prime,\xi)$ in $\R^{m}\times\R^m$. Therefore by Montel's Theorem, we have that there exists a sequence $\{\epsilon_j\}_{j\in\mathbb{Z}_+}$, with $\epsilon_j\to 0$, such that $\mathrm{G}_3^{\epsilon_j}(z)$ converges to $\mathrm{G}_3(z)$ uniformly in $\mathcal{O}_2$, and $\mathrm{G}_3(z)$ is holomorphic with respect to $z$, and it is given by

\begin{align*}
     \mathrm{G}_3(z)\doteq&\iint\Big\langle u(x^{\prime\prime}), \chi(x^{\prime\prime})|\xi|^\frac{m}{2}\Delta(Z(x^\prime)-Z(x^{\prime\prime}),\xi)e^{i\xi\cdot(z-Z(x^{\prime\prime}))}\cdot\\
    &\cdot e^{-|\xi|\big[\langle z-Z(x^\prime)\rangle^2+\langle Z(x^\prime)-Z(x^{\prime\prime})\rangle^2\big]}\det Z_x(x^{\prime\prime})\Big\rangle\d x^\prime\d\xi,
\end{align*}

\noindent where the integral is taken on $\R^m\times\{|x^\prime|\geq r_0\}$. So if we take $V_2\subset V_1$ a neighborhood of the origin, such that 

\begin{equation*}
    \{Z(x)\;:\;x\in V_2\}\subset\mathcal{O}_2,
\end{equation*}

\noindent we have that $\mathrm{I}_3^{\epsilon_j}(x)\longrightarrow \mathrm{G}_3(Z(x)$, on $x\in V_2$.

Now finely we shall analyze the term $\mathrm{I}_1^\epsilon(x)$. Let $(x)\in B_{r}(0)$ and $\alpha\in\mathbb{Z}_+^m$. Then 

\begin{align*}
      \mathrm{M}^\alpha \mathrm{I}_1^\epsilon(x)=  \iint_{Q^1}&\mathrm{M}^\alpha \left\{e^{i\zeta\cdot(Z(x)-z^\prime)-\langle\zeta\rangle\langle Z(x)-z^\prime\rangle^2}\right\}e^{-\epsilon\langle\zeta\rangle^2}\cdot\\
      &\cdot\mathfrak{F}[\chi u](z^\prime,\zeta)\langle\zeta\rangle^\frac{m}{2}\d\zeta\d Z^\prime.
\end{align*}

\noindent Since $\{\mathrm{M}_1,\dots,\mathrm{M}_m\}$ are pairwise commuting and $\mathrm{M}_jZ_k(x)=\delta_{j,k}$, we can use formula \eqref{eq:derivative-gaussian} to calculate
\[
\mathrm{M}^\alpha \big\{
    e^{i\zeta\cdot(Z(x)-z^\prime) - \langle\zeta\rangle\langle Z(x)-z^\prime\rangle^2}
\big\},
\]
obtaining

\begin{align*}
    \mathrm{M}^\alpha \mathrm{I}_1^\epsilon(x)&= \sum_{\beta\leq\alpha}\binom{\alpha}{\beta}\iint_{Q^1}\mathrm{M}^{\alpha-\beta} e^{i\zeta\cdot(Z(x)-z^\prime)}\mathrm{M}^\beta e^{-\langle\zeta\rangle\langle Z(x)-z^\prime\rangle^2}\cdot\\
    &\cdot e^{-\epsilon\langle\zeta\rangle^2}\mathfrak{F}[\chi u](z^\prime,\zeta)\langle\zeta\rangle^\frac{m}{2}\d\zeta\d Z^\prime\\
    &=\sum_{\beta\leq\alpha}\binom{\alpha}{\beta}\sum_{l^1_1+2l^1_2=\beta_1}\cdots\sum_{l^m_1+2\l^m_2=\beta_m}\frac{\beta!}{l^1_1!l^1_2!\cdots l^m_1!l^m_2!}\cdot\\
    &\cdot \iint_{Q^1} e^{i\zeta\cdot(Z(x)-z^\prime)-\langle\zeta\rangle\langle Z(x)-z^\prime\rangle^2-\epsilon\langle\zeta\rangle^2}\mathfrak{F}[\chi u](z^\prime,\zeta)\langle\zeta\rangle^\frac{m}{2}\cdot\\
    &\cdot(-\langle\zeta\rangle)^{l^1_1+l^1_2+\cdots+ l^m_1+l^m_2}(i\zeta)^{\alpha-\beta}(2(Z_1(x,t)-z^\prime_1))^{l^1_1}\cdots\\
    &\cdots(2(Z_m(x)-z^\prime_m))^{l^m_1}\d\zeta\d Z^\prime.\\
\end{align*}

\noindent Therefore by \eqref{eq:FBI-decay} there exists $\tilde{\epsilon}>0$ such that

\begin{align*}
   \left| \mathrm{M}^\alpha \mathrm{I}_1^\epsilon(x)\right|&\leq \sum_{\beta\leq\alpha}\binom{\alpha}{\beta}\sum_{l=(l^\prime,l^{\prime\prime})}\frac{\beta!}{l!}\iint_{Q^1}e^{-(1-\kappa)|\zeta||Z(x)-z^\prime|^2}\cdot\\
   &\cdot |\zeta|^{|\alpha-\beta|+l^1_1+l^1_2+\cdots+ l^m_1+l^m_2+\frac{m}{2}}\left|\mathfrak{F}[\chi u](z^\prime,\zeta)\right||\d\zeta\d Z^\prime|\\
   &\leq C_1^{|\alpha|+1}\sum_{\beta\leq\alpha}\binom{\alpha}{\beta}\sum_{l=(l^\prime,l^{\prime\prime})}\frac{\beta!}{l!}\iint_{Q^1}C_\bullet^{k+1}\frac{M_k}{|\zeta|^k}\cdot\\
   &\cdot|\zeta|^{|\alpha-\beta|+l^1_1+l^1_2+\cdots+ l^m_1+l^m_2+\frac{m}{2}}|\d\zeta\d Z^\prime|\\
   \end{align*}
   
   \noindent so choosing $k=|\alpha-\beta|+l^1_1+l^1_2+\cdots+ l^m_1+l^m_2+\kappa$, where $\kappa$ is any integer bigger than $3/2m+1$, we obtain
   
   \begin{align*}
       \left| \mathrm{M}^\alpha \mathrm{I}_1^\epsilon(x)\right|&\leq  C_2^{|\alpha|+1}\sum_{\beta\leq\alpha}\binom{\alpha}{\beta}\sum_{l^1_1+2l^1_2=\beta_1}\cdots\sum_{l^m_1+2l^m_2=\beta_m}\frac{\beta!}{l^1_1!l^1_2!\cdots l^m_1!l^m_2!}M_{\kappa}M_{|\alpha|-|\beta|}M_{l^1_1+l^1_2}\cdots M_{l^m_1+l^m_2}\\
       &\leq C_3^{|\alpha|+1}\sum_{\beta\leq\alpha}\binom{\alpha}{\beta}\sum_{l^1_1+2l^1_2=\beta_1}\cdots\sum_{l^m_1+2l^m_2=\beta_m}\frac{\beta!}{l^1_1!(2l^1_2)!\cdots l^m_1!(2l^m_2)!}M_{|\alpha|-|\beta|}M_{l^1_1+l^1_2}\frac{(2l^1_2)!}{l^1_2!}\cdots M_{l^m_1+l^m_2}\frac{(2l^m_2)!}{l^m_2!}\\
       &\leq  C_4^{|\alpha|+1}\sum_{\beta\leq\alpha}\binom{\alpha}{\beta}\sum_{l^1_1+2l^1_2=\beta_1}\cdots\sum_{l^m_1+2l^m_2=\beta_m}\frac{\beta!}{l^1_1!(2l^1_2)!\cdots l^m_1!(2l^m_2)!}M_{|\alpha|-|\beta|}M_{l^1_1+l^1_2}l^1_2!\cdots M_{l^m_1+l^m_2} l^m_2!\\
   &\leq C_4^{|\alpha|+1}\sum_{\beta\leq\alpha}\binom{\alpha}{\beta}\sum_{l^1_1+2l^1_2=\beta_1}\cdots\sum_{l^m_1+2l^m_2=\beta_m}\frac{\beta!}{l^1_1!(2l^1_2)!\cdots l^m_1!(2l^m_2)!}M_{|\alpha|-|\beta|}M_{l^1_1+l^1_2}M_{l^1_2}\cdots M_{l^m_1+l^m_2} M_{l^m_2}\\
   &\leq  C_5^{|\alpha|+1}\sum_{\beta\leq\alpha}\binom{\alpha}{\beta}\sum_{l^1_1+2l^1_2=\beta_1}\cdots\sum_{l^m_1+2l^m_2=\beta_m}\frac{\beta!}{l^1_1!(2l^1_2)!\cdots l^m_1!(2l^m_2)!}M_{|\alpha|-|\beta|}M_{l^1_1+2l^1_2}\cdots M_{l^m_1+2l^m_2}\\
   &\leq  C_6^{|\alpha|+1}\sum_{\beta\leq\alpha}\binom{\alpha}{\beta}\sum_{l^1_1+2l^1_2=\beta_1}\cdots\sum_{l^m_1+2l^m_2=\beta_m}\frac{\beta!}{l^1_1!(2l^1_2)!\cdots l^m_1!(2l^m_2)!}M_{|\alpha|-|\beta|}M_{|\beta|}\\
   &\leq  C_6^{|\alpha|+1}M_{|\alpha|}\sum_{\beta\leq\alpha}\binom{\alpha}{\beta}\sum_{l^1_1+2l^1_2=\beta_1}\cdots\sum_{l^m_1+2l^m_2=\beta_m}\frac{\beta!}{l^1_1!(2l^1_2)!\cdots l^m_1!(2l^m_2)!}\\
   &\leq  C_6^{|\alpha|+1}M_{|\alpha|}\sum_{\beta\leq\alpha}\binom{\alpha}{\beta}\sum_{l^1_1+l^1_2=\beta_1}\cdots\sum_{l^m_1+l^m_2=\beta_m}\frac{\beta!}{l^1_1!l^1_2!\cdots l^m_1!l^m_2!}\\
   &\leq  C_7^{|\alpha|+1}M_{|\alpha|},\\
   \end{align*}
   
   \noindent where the constant $C_4$ does not depend on $\epsilon$ (see Lemma $4.2$ of \cite{bierstone} for estimating this binomials).
\end{proof}
}
\comment{

\section{Microlocal Theory in Hypo-analytic manifolds}
\section{Applications?}

\section{To do List}

\begin{enumerate}
    \item Extensão $\Longrightarrow$ Estimativa na FBI;
    \item Estimativa na FBI $\Longrightarrow$ pertence a $\Cl^{\mathcal{M}}(U;\mathrm{X})$;
    \color{red} \item Estender a Prop 32, Thm 34, Thm 45 (extender para estruturas loc int), etc do Adwan; (``análise microlocal com respeito a $\mathrm{X}$'') \color{black}
    \item Aplicação para systemas não-lineares;
    \color{red} \item Extensões semiglobais; 
    \item Aspectos básicos, fundamentos, definir suporte singular, \textit{wave-front set}, etc
    \item Boman \color{black}
\end{enumerate}

\subsection{Extensão $\Longrightarrow$ Estimativa na FBI}

\subsection{Estimativa na FBI $\Longrightarrow$ pertence a $\Cl^{\mathcal{M}}(U;\mathrm{X})$}

\subsection{Estender resultados microlocais}

\begin{defn}
Let $\Sigma \subset \C^m$ be a maximally real submanifold and $u \in \mathcal{D}^\prime(\Sigma)$, $p \in \Sigma$, and $\sigma \in \T^\ast_p \Sigma \setminus 0$ and $\zeta \in \R \T^\prime_{\Sigma}|_p$ (...) acute open convex cones $\Gamma_1, \dots, \Gamma_N$ in $\T_p\Sigma$,
\end{defn}

\begin{prop}
Let $\Sigma \subset \C^m$ be a maximally real submanifold well positioned at $p \in \Sigma$.
Let $u \in \mathcal{D}^\prime(\Sigma)$ and let $\zeta_0 \notin \mathrm{WF}_{\mathcal{M}}(u)|_p$ then there is an conic open neighbourhood $\Cl$ of $\zeta_0$ in $\R \T^\prime_{\Sigma}$ and constants $c_1,c_2 > 0$ such that 
\[
|\mathfrak{F}[\chi u](z,\zeta)|\leq C^{k+1}\frac{M_k}{|\zeta|^k}, \quad (z,\zeta) \in \Cl.
\]
\end{prop}

We have
\begin{equation*}
    \mathfrak{F}[\chi u](z,\zeta) = 
    \Big\langle \chi(x)u(x), e^{i\zeta\cdot(z-Z(x)) - \langle \zeta \rangle \langle z-Z(x)\rangle^2}
     \Delta(z-Z(x),\zeta)\det Z_x(x)\Big\rangle,
\end{equation*}
and $u = \mathrm{b}f$ (assume $\Gamma$ and fix $v_0 \in \Gamma$)
\begin{equation*}
    \mathfrak{F}[\chi u](z,\zeta) = \lim_{\lambda\to 0^+}\int_{V}e^{i\zeta\cdot(z-Z(x)) - \langle \zeta \rangle \langle z-Z(x)\rangle^2}
    \chi(x)f\left(Z(x)+i\lambda\frac{v_0}{|v_0|}\right) \Delta(z-Z(x),\zeta) \det Z_x(x)\d x
\end{equation*}

We shall deform the contour of integration. ($V_1 = \widetilde{V}$?)
Let $(z,\zeta) \in \left.\R\T^\prime_{\Sigma}\right|_{\widetilde{V}}$ be fixed, where $\widetilde{V}\Subset V$ is a ball centered at the origin to be chosen latter. 
Let $s > 0$ and consider the following map

\begin{equation*}
    V_1\ni y\mapsto \Theta_s(y)\doteq Z(y)+is \mathbb{E}_{V_2}(y)\frac{v_0}{|v_0|},
\end{equation*}

\noindent where $\mathbb{E}_{V_2}$ is characteristic function of $V_2$. Note that if $s$ and $\lambda$ are small enough then $Z(x)+\Theta_s(x)\in\mathcal{W}$ for every $x\in V_1$. For each $\lambda>0$ we can write

\begin{align*}
    \int_{V_1}e^{i\zeta\cdot(z-Z(x)) - \langle \zeta \rangle \langle z-Z(x)\rangle^2}
    &\chi(x)f\left(Z(x)+i\lambda\frac{v_0}{|v_0|}\right) \Delta(z-Z(x),\zeta) Z_x(x)\d x=\\
    &=\int_{V_2}e^{i\zeta\cdot(z-Z(x)) - \langle \zeta \rangle \langle z-Z(x)\rangle^2}
    f\left(Z(x)+i\lambda\frac{v_0}{|v_0|}\right) \Delta(z-Z(x),\zeta) Z_x(x)\d x\\
    &+\int_{V_1\setminus V_2}e^{i\zeta\cdot(z-Z(x)) - \langle \zeta \rangle \langle z-Z(x)\rangle^2}
    \chi(x)f\left(Z(x)+i\lambda\frac{v_0}{|v_0|}\right) \Delta(z-Z(x),\zeta) Z_x(x)\d x
\end{align*}

By Stokes' theorem we obtain
\begin{align*}
     \int_{V_2}e^{i\zeta\cdot(z-Z(x)) - \langle \zeta \rangle \langle z-Z(x)\rangle^2}&
    f\left(Z(x)+i\lambda\frac{v_0}{|v_0|}\right) \Delta(z-Z(x),\zeta) \det Z_x(x)\d x =\\
     &=\int_{ V_2}e^{i\zeta\cdot(z-\Theta_s(x))-\langle\zeta\rangle\langle z-\Theta_s(x)\rangle^2}f(\Theta_{\lambda+s}(x))\Delta(z-\Theta_s(x),\zeta) \, \d Z\\
     &+(-1)^{m-1}2i\int_0^s\int_{V_2}e^{i\zeta\cdot(z-\Theta_\sigma(x))-\langle\zeta\rangle\langle z-\Theta_\sigma(x)\rangle^2}\del_{\bar{z}} f(\Theta_{\lambda+\sigma}(x))\cdot\frac{v_0}{|v_0|}\Delta(z-\Theta_\sigma(x),\zeta) \, \d Z \, \d\sigma\\
     &-\int_0^s\int_{\del V_2}e^{i\zeta\cdot(z-\Theta_\sigma(x))-\langle\zeta\rangle\langle z-\Theta_\sigma(x)\rangle^2}f(\Theta_{\lambda+\sigma}(x))\Delta(z-\Theta_\sigma(x),\zeta) \, \d S_\Sigma \, \d\sigma,
\end{align*}

\noindent where $\d S_\Sigma$ is the surface measure in $\{Z(x)\,:\,x\in\del V_2\}$. 
We shall estimate these four integrals separately, and we reefer to them as $(1),(2),(3)$, and $(4)$. Since the estimate for $(1)$ and $(4)$ are very similar, we will estimate them first. We start writing $\widetilde{V}=B_{r}(0)$, so $z=Z(x_0)$ for some $x_0\in B_r(0)$. 
Since $\Sigma$ is well-positioned and 
$|z-Z(x)| \geq |x_0-x|$ for every $x$,  we have
\begin{equation*}
    \Im \, \big\{
    \zeta\cdot (z-Z(x))+i\langle\zeta\rangle\langle z-Z(x)\rangle^2
    \big\} \geq c(r_2-r)^2|\zeta|,
\end{equation*}

\noindent for every $x\in V_1\setminus V_2$, where $V_2=B_{r_2}(0)$, and we are choosing $r<r_2$. Therefore $(1)$ can be estimated by $Ce^{-c(r_2-r)^2|\zeta|}$.

\noindent Now the exponent of $(4)$ can be written as

\begin{align*}
    i\zeta\cdot(z-\Theta_\sigma(x))-\langle\zeta\rangle\langle z-\Theta_\sigma(x)\rangle^2 &= i\zeta\cdot\left(z-Z(x) - i\sigma\frac{v_0}{|v_0|}\right) - \langle \zeta \rangle \left\langle z-Z(x) - i\sigma\frac{v_0}{|v_0|}\right\rangle^2\\
    &=i\zeta\cdot(z-Z)+\sigma\frac{\zeta\cdot v_0}{|v_0|}-\langle\zeta\rangle\langle z-Z\rangle^2 + \sigma^2 \langle\zeta\rangle +2i\sigma\langle\zeta\rangle(z-Z)\cdot\frac{v_0}{|v_0|}
\end{align*}

\noindent where we are writing $Z=Z(x)$.
Now recall that in $(4)$ we are integrating in $\sigma$ from $0$ to $s$, so $\sigma<s$, and using the fact that $\Sigma$ is well-positioned and (trocar pela estimativa II.4.11 pp 155 (46) do livro Cordaro-Treves)
\[
\Re \, \langle \zeta \rangle \geq \sqrt{\dfrac{1-\kappa^2}{1+\kappa^2}}|\zeta|
\]
we have
\begin{align*}
    \Re \, \big\{
        i \zeta \cdot (z-\Theta_\sigma(x)) - \langle\zeta\rangle \langle z - \Theta_\sigma(x) \rangle^2
    \big\} 
    &=
    \Re \, \bigg\{
        i \zeta \cdot (z-Z) + \sigma\frac{\zeta\cdot v_0}{|v_0|} - \langle\zeta\rangle\langle z-Z\rangle^2 + \sigma^2 \langle\zeta\rangle +2i\sigma\langle\zeta\rangle(z-Z)\cdot\frac{v_0}{|v_0|}  
    \bigg\} \\
    &=
    \Re \, \big\{
        i \zeta \cdot (z-Z)  - \langle\zeta\rangle\langle z-Z\rangle^2
    \big\} + 
    \sigma \Re \, \bigg\{
        \frac{\zeta\cdot v_0}{|v_0|} +\sigma\langle \zeta \rangle + 2i\langle \zeta \rangle (z-Z)\cdot\frac{v_0}{|v_0|}
    \bigg\} \\
    &\leq
    -c|z-Z|^2|\zeta| + \sigma \Re \, \frac{\zeta \cdot v_0}{|v_0|} + \sigma^2 |\langle \zeta \rangle| + 2\sigma |\langle \zeta \rangle||z-Z| \\
    &\leq 
    -c|z-Z|^2|\zeta| - \frac{c_0\sigma}{2}|\zeta|+ \sigma^2 |\zeta| + 2\sigma |\zeta||z-Z| \\
    &=
    -|\zeta| \bigg\{
        |z-Z|\big( c|z-Z| - 2\sigma \big) + \sigma \bigg( \frac{c_0}{2} - \sigma \bigg)
    \bigg\}
\end{align*}

\begin{align*}
    \Im \, \big\{
        \zeta \cdot(z - \Theta_\sigma(x))+i \langle \zeta \rangle \langle z-\Theta_\sigma(x) \rangle^2 
    \big\} &= 
    \Im \, \big\{
        \zeta\cdot(z-Z(x))+i\langle\zeta\rangle\langle z-Z(x)\rangle^2
    \big\}
    + \sigma \Im \, \bigg\{
        -i\frac{\zeta\cdot v_0}{|v_0|} + i\sigma\langle\zeta\rangle - 2\langle\zeta\rangle(z-Z(x))\cdot\frac{v_0}{|v_0|}
    \bigg\}\\
    &\geq c|z-Z(x)|^2|\zeta| - \sigma\Re\frac{\zeta\cdot v_0}{|v_0|} + \sigma^2\Re\langle\zeta\rangle-2\sigma\Im \, \bigg\{v_0\cdot(z-Z(x))\frac{\langle\zeta\rangle}{|v_0|}\bigg\}\\
    &\geq c|z-Z(x)|^2|\zeta|+\sigma\sqrt{\frac{1-\kappa^2}{1+\kappa^2}}(1-\sigma)|\zeta|-2\sigma|\langle\zeta\rangle||z-Z(x)|\\
    &\geq |\zeta||z-Z(x)|\big(c|z-Z(x)|-2\lambda\big)\\
    &\geq |\zeta||z-Z(x)|\big(c(r_2-r)-2\lambda\big)\\
    &\geq |\zeta|(r_2-r)\big(c(r_2-r)-2\lambda\big),
\end{align*}

\noindent where we are choosing $\lambda$ satisfying $2\lambda<c(r_2-r)$. Therefore we can estimate $(4)$ by $Ce^{-\epsilon_1|\zeta|}$, where $\epsilon_1=(r_2-r)\big(c(r_2-r)-2\lambda\big)$. 
Before estimating $(2)$ and $(3)$, note that the exponent that appears in each of them is similar to the one that we have just estimated. In $(2)$ we have that $x\in V_2$, \textit{i.e.}, $|x|<r_2$, so the exponential have the following estimate:

\begin{equation*}
    \left|e^{i\zeta\cdot(z-\Theta_\lambda(x))-\langle\zeta\rangle\langle z-\Theta_\lambda(x)\rangle^2}\right|
    \leq e^{-|\zeta|\left\{\lambda\sqrt{\frac{1-\kappa^2}{1+\kappa^2}}(1-\lambda)+|z-Z|\left[c|z-Z|-2\lambda\right]\right\}},
\end{equation*}

\noindent where again we are writing $Z=Z(x)$. When $c|z-Z(x)|\geq 2\lambda$ we have that

\begin{equation*}
 \left|e^{i\zeta\cdot(z-\Theta_\lambda(x))-\langle\zeta\rangle\langle z-\Theta_\lambda(x)\rangle^2}\right|\leq e^{-|\zeta|\lambda\sqrt{\frac{1-\kappa^2}{1+\kappa^2}}(1-\lambda)},
\end{equation*}

\noindent and when $c|z-Z(x)|\leq 2\lambda$,

\begin{align*}
    \left|e^{i\zeta\cdot(z-\Theta_\lambda(x))-\langle\zeta\rangle\langle z-\Theta_\lambda(x)\rangle^2}\right|&\leq e^{-|\zeta|\left\{\lambda\sqrt{\frac{1-\kappa^2}{1+\kappa^2}}(1-\lambda)-2\lambda|z-Z(x)|\right\}}\\
    &\leq e^{-|\zeta|\left\{\lambda\sqrt{\frac{1-\kappa^2}{1+\kappa^2}}(1-\lambda)-\frac{4\lambda^2}{c}\right\}}\\
    &\leq e^{-|\zeta|\lambda\left\{\sqrt{\frac{1-\kappa^2}{1+\kappa^2}}(1-\lambda)-\frac{4\lambda}{c}\right\}}.
\end{align*}

\noindent Combining these two estimates we conclude that $(2)$ is bounded by $Ce^{-\lambda\epsilon_2|\zeta|}$, where $\epsilon_2=\sqrt{\frac{1-\kappa^2}{1+\kappa^2}}(1-\lambda)-\frac{4\lambda}{c}>0$, decreasing $\lambda$ if necessary.  To estimate $(3)$ we reason as before, so for each $0<\sigma\leq \lambda$ we have that if $|z-Z(x)|\geq 2\sigma/c$ then

\begin{equation*}
 \left|e^{i\zeta\cdot(z-\Theta_\sigma(x))-\langle\zeta\rangle\langle z-\Theta_\sigma(x)\rangle^2}\right|\leq e^{-|\zeta|\sigma\sqrt{\frac{1-\kappa^2}{1+\kappa^2}}(1-\sigma)},
 \end{equation*}
 
 \noindent and if $|z-Z(x)|\leq 2\sigma/c$,
 
\begin{align*}
    \left|e^{i\zeta\cdot(z-\Theta_\sigma(x))-\langle\zeta\rangle\langle z-\Theta_\sigma(x)\rangle^2}\right|&\leq e^{-|\zeta|\left\{\sigma\sqrt{\frac{1-\kappa^2}{1+\kappa^2}}(1-\sigma)-2\sigma|z-Z(x)|\right\}}\\
    &\leq e^{-|\zeta|\left\{\sigma\sqrt{\frac{1-\kappa^2}{1+\kappa^2}}(1-\sigma)-\frac{4\sigma^2}{1-\kappa}\right\}}\\
    &\leq e^{-|\zeta|\sigma\left\{\sqrt{\frac{1-\kappa^2}{1+\kappa^2}}(1-\sigma)-\frac{4\sigma}{1-\kappa}\right\}},
\end{align*}

\noindent and since $\sqrt{\frac{1-\kappa^2}{1+\kappa^2}}(1-\sigma)-\frac{4\sigma}{c}\geq \epsilon_2$, for $\sigma<\lambda$, we have that

\begin{equation*}
    \left|e^{i\zeta\cdot(z-\Theta_\sigma(x))-\langle\zeta\rangle\langle z-\Theta_\sigma(x)\rangle^2}\right|\leq e^{-\sigma\epsilon_2|\zeta|},
\end{equation*}

\noindent for every $x\in V_1$. So for every $k>0$ we can estimate the integral $(4)$ by

\begin{align*}
    \bigg(\int_0^s e^{-\sigma\epsilon_2|\zeta|} \sup_{(x)\in V_0}\left|\del_{\bar{z}}f(\Theta_{\lambda+\sigma}(x))||\Delta(z-\Theta_\sigma(x),\zeta)\right|\d\sigma\bigg) 2\int_{V_1}\left|\d Z(x)\right| & \leq C^{k+1}m_k\int_0^\infty e^{-\sigma\epsilon_2|\zeta|}\left|\frac{(\sigma+\lambda) v_0}{|v_0|}\right|^k\d\sigma\\
    &\leq C^{k+1}m_k\int_0^\infty e^{-y}\left(\frac{y}{\epsilon_2|\zeta|}+\lambda\right)^k\frac{1}{\epsilon_2|\zeta|}\d y\\
    &\underset{\lambda\to 0^+}{\longrightarrow} C^{k+1}m_k\int_0^\infty e^{-y}\left(\frac{y}{\epsilon_2|\zeta|}\right)^k\frac{1}{\epsilon_2|\zeta|}\d y\\
    &\leq C^{k+1}\frac{M_k}{(\epsilon_2|\zeta|)^{k+1}}.
\end{align*}

\noindent \textcolor{red}{
    Since the constant $C>0$ does not depend on $k$, and the above estimate holds for every $k>0$, we have that $(4)$ is bounded by $Ce^{-\epsilon_3|\zeta|^{\frac{1}{s}}}$, for some constants $C, \epsilon_3>0$.
} 
Summing up we have obtained the required estimate \eqref{eq:FBI-decay}, with $\widetilde{V}=B_r(0)$, where $r>0$ is any positive number less than $r_2$, the radius of $V_2$. 

\subsection{Extensões em estruturas mais gerais}
}

\comment{
\section{Microlocal theory on hypo-analytic manifolds of maximal rank}

 Let $\Omega$ be a $m$-dimensional smooth manifold. A hypo-analytic structure on $\Omega$ of maximal rank is a pair $\{(U_\alpha)_{\alpha\in\Lambda},(Z_\alpha)_{\alpha\in\Lambda}\}$ such that 
  
  \begin{itemize}
      \item $(U_\alpha)_{\alpha\in\Lambda}$ is an open covering for $\Omega$;
      \item $Z_\alpha:U_\alpha\longrightarrow\mathbb{C}^m$ is a smooth map, for every $\alpha\in\Lambda$;
      \item $\mathrm{d}Z_{\alpha,1},\dots,\mathrm{d}Z_{\alpha,m}$ are $\mathbb{C}-$linear independent on $U_\alpha$, for every $\alpha\in\Lambda$;
      \item if $\alpha\neq\beta$, then to each $p\in U_\alpha\cap U_\beta$ there is a holomorphic map $H$ such that $Z_\alpha=H\circ Z_\beta$, on an open neighborhood of $p$ in $U_\alpha\cap U_\beta$;
      \item if $Z:U\longrightarrow\mathbb{C}^m$ is a smooth function such that for every $p\in U\cap U_\alpha$ there exists a holomorphic function $H$ such that $Z=H\circ Z_\alpha$, then $(U,Z)=(U_\beta,Z_\beta)$, for some $\beta\in\Lambda$.  
  \end{itemize}
  
  \noindent We call a pair $(U,Z)$, as before, a hypo-analytic chart. So let $(U,Z)$ be a hypo-analytic chart. Since the differentials $\mathrm{d}Z_1, \dots, \mathrm{d}Z_m$ are $\mathbb{C}$-linear independent, shrinking $U$ if necessary, there exist $m$ complex vector fields $\mathrm{X}_1, \dots, \mathrm{X}_m$, pair-wise commuting, satisfying $\mathrm{X}_j Z_k = \delta_{jk}$. Notice that the image $Z(U) \subset \mathbb{C}^m$ is a maximally real submanifold. If $u \in \mathcal{C}^\infty(U)$ we say that $u \in \mathcal{C}^{\mathcal{M}}(U, \mathrm{X}_1, \dots, \mathrm{X}_m)$ if for every compact $K \subset U$ there exist a positive constant $C$ such that for every $\alpha \in \mathbb{Z}_+^m$,
  
  \begin{equation*}
      \sup_{K}|\mathrm{X}^\alpha u| \leq C^{|\alpha| + 1}M_{|\alpha|}.
  \end{equation*}
  
  \noindent We can also define $\tilde{u} \in \mathcal{C}^\infty (Z(U))$ by $\tilde{u}(Z(x)) = u(x)$. We already know that $u \in \mathcal{C}^\mathcal{M}(U, \mathrm{X}_1, \dots, \mathrm{X}_m)$ then there exist $F \in \mathcal{C}^\infty(\mathbb{C}^m)$, $C > 0$, such that
  
  \begin{equation*}
      \begin{cases}
        F|_{Z(U)} = \tilde{u};\\
        |\bar{\partial}_z F(z)| \leq C^{k+1} m_k \text{dist}(z,Z(U))^k,\qquad \forall k \in \mathbb{Z}_+;
      \end{cases}
  \end{equation*}
  
  \noindent Now let $(\widetilde{U}, \widetilde{Z})$ be an another hypo-analytic chart such that $U \cap \widetilde{U} \neq \emptyset$. One question that raises fairly naturally is $u|_{U \cap \widetilde{U}} \in \mathcal{C}^\mathcal{M}( U \cap \widetilde{U}, \widetilde{\mathrm{X}}_1, \dots, \widetilde{\mathrm{X}}_m)$? The answer to this question is yes, for if $H$ is that holomorphic map such that $H \circ \widetilde{Z} = Z$, then we define $\widetilde{F} = F \circ H$ we have that 
  
   \begin{equation*}
      \begin{cases}
        \widetilde{F}|_{\widetilde{Z}(U \cap \widetilde{U})} = \tilde{u}|_{\widetilde{Z}(U \cap \widetilde{U})};\\
        |\bar{\partial}_z \widetilde{F}(z)| \leq C^{k+1} m_k \text{dist}(z,\widetilde{Z}(U \cap \widetilde{U}))^k,\qquad \forall k \in \mathbb{Z}_+;
      \end{cases}
  \end{equation*}
  
  \begin{defn}
  Let $u \in \mathcal{C}^\infty(\Omega)$. We say that $u$ is a $\mathcal{C}^\mathcal{M}$-vector, and we write $u \in \mathcal{C}^\mathcal{M}(\Omega; \mathbb{X})$, if for every compact set $K \subset U$ contained in an hypo-analytic coordinated open set, \textit{i.e}. there exists a map $Z : U \longrightarrow \mathbb{C}^m$ such that the pair $(U,Z)$ is a hypo-analytic chart, there exists a positive constant $C>0$ such that for every $\alpha \in \mathbb{Z}_+^m$,
  
  \begin{equation*}
      \sup_{K}|\mathrm{X}^\alpha u| \leq C^{|\alpha| + 1}M_{|\alpha|},
  \end{equation*}
  
  \noindent where the complex vector fields $\mathrm{X}_1, \dots, \mathrm{X}_m$ are the ones describe above.
  \end{defn}
  
  \noindent In the same fashion we can define the wavefront set with respect to the hypo-analytic structure.
  
  \begin{defn}
  Let $u \in \mathcal{D}^\prime(\Omega)$ and let $(p_0, \xi_0) \in \mathrm{T} \Omega$. We say that $u$ is $\mathcal{C}^\mathcal{M}$-microlocally regular with respect to the hypo-analytic structure, if 
  \end{defn}
  
  \begin{thm}
  Let $\Omega$ be a real-analytic manifold endowed with a real-analytic hypo-analytic structure with maximal rank. Then $\mathcal{C}^\mathcal{M}(\Omega) = \mathcal{C}^\mathcal{M}(\Omega; \mathbb{X})$. 
  \end{thm}
}

\bibliographystyle{unsrt}
\bibliography{bibliography}
\end{document}